\numberwithin{equation}{section}
\newtheorem{lem}{Lemma}[section]
\newtheorem{thm}{Theorem}[section]
\newtheorem{exa}{Example}[section]
\theoremstyle{definition}
\theoremstyle{remark}
\title[Differences]{Studies of Differences from the point of view of Nevanlinna Theory}
\author[Zheng and Korhonen]{ Zheng Jianhua and Risto Korhonen}
\thanks{{2000 Mathematics Subject Classification: Primary 39A10, Secondary 30D35, 39A12}\\
{The first author is partially supported by the grant (No. 11571193)
of NSF of
China.}\\
{The second author is supported in part by the Academy of Finland grant 286877}}
\address{Department of Mathematical Sciences, Tsinghua University, P. R.
China} \email{zheng-jh@mail.tsinghua.edu.cn}
\address{Department of Physics
and Mathematics, University of Eastern Finland, P. O. Box 111, 80101
Joensuu, Finland} \email{risto.korhonen@uef.fi}
\begin{document}

\allowdisplaybreaks

\begin{abstract} This paper consists of three parts. First, we give so far the best
condition under which the shift invariance of the counting function,
and of the characteristic of a subharmonic function, holds. Second,
a difference analogue of logarithmic derivative of a
$\delta$-subharmonic function is established allowing the case of
hyper-order equal to one and minimal hyper-type, which improves the
condition of the hyper-order less than one. Finally, we make a
careful discussion of a well-known difference equation and give out
the possible forms of the equation under a growth condition for the
solutions.
\end{abstract}

\maketitle

\section{Introduction and Main Results}\label{intr}

One of the main purposes of Nevanlinna theory in the study of
difference equations is to single out those equations which are
assumed to have meromorphic solutions of finite order and then to
make a further careful analysis of the singled-out difference
equations by local singularity analysis and other methods. From the
point of view of discrete Painlev\'e equations, much interest has
been attracted by the following algebraic difference equations
\begin{equation}\label{equ1.1++}\overline{w}=R(z,w),\end{equation}
\begin{equation}\label{equ1.2++}\overline{w}+\underline{w}=R(z,w),\end{equation}
\begin{equation}\label{equ1.3++}\overline{w}\underline{w}=R(z,w),\end{equation}
\begin{equation}\label{equ1.4++}(\overline{w}+w)(w+\underline{w})=R(z,w),
\end{equation}
 where, henceforth,
$\overline{w}=w(z+1), \underline{w}=w(z-1)$ and $R(z,w)$ is a
rational function in $z$ and $w$. The eye-catching reason is that
Nevanlinna theory has proved to be powerful in the study of these
kinds of difference equations, although they have widely been
considered by other methods. The basic method, when implementing
Nevanlinna theory to study the equations above, consists of two
steps: The first is that in view of the Valiron-Mohon'ko Theorem
(\cite{Valiron}\cite{Mohon'ko}), an estimate of the degree of
$R(z,w)$ in $w$ is obtained, in fact, ruling out the higher degree
equations. These type of results may be called difference Malmquist
type theorems. As we know, ${\rm deg}_w(R)\leq 1$ for
(\ref{equ1.1++}) (\cite{Yanagihara}); ${\rm deg}_w(R)\leq 2$ for
(\ref{equ1.2++}) and (\ref{equ1.3++}) (\cite{Ablowitz}); ${\rm
deg}_w(R)\leq 4$ for (\ref{equ1.4++}) (\cite{Ramani}) if those
difference equations are assumed to have a transcendental
meromorphic solution of finite order. The second step is that the
equations are classified into several special types by analyzing the
behavior near poles of a meromorphic solution. The basic idea here
is related to singularity confinement, but the analysis goes deeper.
This analysis reduces the above equations into linear equations, or
the Riccati equation with degree one, or to one of the difference
Painlev\'e equations (see, e.g., \cite{HalburdKorhonen}). To
determine high pole density of meromorphic solutions is an
application of a difference Clunie type theorem~(\cite{Clunie},
\cite{LaineYang} and \cite{Risto}).

A difference rational function is a fraction of two irreducible
difference polynomials and its total degree is the maximal one of
degrees of the numerator and the denominator. Let us consider the
following difference polynomial equation
\begin{equation}\label{equ1.1+}U(z,\vec{w})P(z,\vec{w})=Q(z,\vec{w}),
\end{equation}
where $U, P$ and $Q$ are three difference polynomials in
$\vec{w}(z)=(w(z),w(z+c_1),...,w(z+c_n))$ for non-zero complex
numbers $c_j$ and $UP$ does not have common factors with $Q$ with
respect to $\vec{w}$. We call the equation (\ref{equ1.1+}) Clunie
type equation after J. Clunie who is the first one to research such
a differential polynomial equations (\cite{Clunie}).

In view of the method above to study this topic, we first establish
a difference Malmquist type theorem, that is to say, find a
relationship among the degrees of three difference polynomials $U,
P$ and $Q$. According to the well-known methods in terms of
Nevanlinna theory, we should first establish a difference version of
the Valiron-Mohon'ko Theorem, in the other words, we are asked to
extend the Valiron-Mohon'ko Theorem to the case of difference
polynomials dealing with the shifts of the function in question.
Therefore, we are forced to consider when
\begin{equation}\label{equ1.2+}T(r,w(z+c))\sim T(r,w(z))\end{equation} as $r\to\infty$ on at least a
sequence of positive numbers tending to $+\infty$ for all finite
complex numbers $c$ where $T(r,w)$ denotes the Nevanlinna
characteristic of $w$. (For basic definitions of Nevanlinna theory, we refer to \cite{Hayman,Zheng}.) It is easily seen that (\ref{equ1.2+}) is
essentially fundamental in the sense that existence of a
transcendental meromorphic solution with (\ref{equ1.2+}) of
difference equations (\ref{equ1.1++}) -- (\ref{equ1.4++}) derives
easily the above-mentioned estimates of the degrees in $w$ of the
rational function $R(z,w)$. Generally, we can establish Theorem~\ref{thm1.1} (see Section~\ref{Clunie_sect} below) which shows the significance of (\ref{equ1.2+}).

Unfortunately, (\ref{equ1.2+}) does not generally hold. An explicit
example is $w(z)=e^{e^z}$ which is such that
$T(r,w(z+1))=m(r,w(z+1))=em(r,w(z))=eT(r,w)$ for all $r>0$. From the
point of view of difference equations, it is easy to find such an
example so that (\ref{equ1.2+}) does not hold. It is proved in
\cite{Shimomura} that for any non-constant polynomial $P(w)$, the
difference equation $w(z+1)=P(w(z))$ has a non-trivial entire
solution. If, in addition, the degree of $P$ satisfies ${\rm
deg}(P)>1$, then none of the sequences of positive numbers tending
to $\infty$ is such that (\ref{equ1.2+}) holds for the entire
solution $w$. A generalization is made in \cite{Yanagihara} with the
$P$ replaced by a rational function $R$, according to which
 the difference equation $w(z+1)=R(w(z))$ always has a
non-trivial meromorphic solution $w$ and for such a solution $w$, we
have $T(r,\overline{w})={\rm deg}_w(R)T(r,w)+O(1)$, and hence
$$T(r,w)\geq KD^r, \quad \forall\ r>0,$$
where $D$ is any positive number less than the degree ${\rm
deg}_w(R)$ of $R$ and $K$ is a positive constant depending on $D$.
Conversely, these facts give us a heads up that (\ref{equ1.2+}) may
be true if $\log T(r,w)=o(r) (r\to\infty)$.

Studying the possibility of (\ref{equ1.2+}) we may ascend to works
of many great mathematicians of the last century, such as Valiron,
Dug\'ue, Hayman, Goldberg and Ostrovskii, only mentioning some of
them here. The reason to study this topic is very natural, as Goldberg
and Ostrovskii pointed out in their monograph \cite{GoldbergeOstrovskii}. All of the functions
with the form $f(qz+c)$ map the complex plane onto the same Riemann
surface. It thus makes sense to study the invariance of quantities
characterizing the asymptotic behavior of $w(z)$ under the linear
transformation $qz+c$ which can be decomposed into $qz$ and $z+c$.
The change produced by $qz$ is clear, so the attention is put on the
shifting by $c$. From the point of view of Ahlfors theory of
covering surfaces, the asymptotic behavior of $w(z)$ is
characterized with the help of the covering area under image of $w$ from
a disk, and the change of a disk by a translating can be handled. In
view of this observation, we can establish
$$T(r, w_c)\leq
T(r+|c|,w)+\frac{(2+|c|)\log (1+|c|)}{\log(r+|c|)}T(r+|c|,w)+O(1),$$
here and henceforth, $w_c$ denotes $w(z+c)$. The details will be
provided in Section~2 below. Therefore, by noting that
$w=(w_c)_{-c}$,\ (\ref{equ1.2+}) is naturally transferred into the
discussion of possibility of the equivalent relation
\begin{equation}\label{equ1.8+}T(r+h,w)\sim T(r,w),\end{equation} as $r\to\infty$ on at least a sequence
of positive numbers tending to $+\infty$ for any positive number
$h$. However, the number $h$ can be chosen as $1$ without loss of
generality. Following the discussion of invariance of Nevanlinna
deficiency under a change of the origin given by Goldberg and
Ostrovskii (\cite{GoldbergeOstrovskii}), we can establish the
following result.

\begin{thm}\label{thm1.2} Let $w$ be a transcendental meromorphic
function with
\begin{equation}\label{equ1.7}\liminf\limits_{r\to\infty}\frac{\log
T(r,w)}{r}=0.\end{equation} Then (\ref{equ1.2+}) and (\ref{equ1.8+})
for $T(r,w)$ and $N(r,w)$ hold as $r\not\in E\to\infty,$ where $E$
is a subset of $[1,+\infty)$ with the zero lower density. If
$\liminf$ in (\ref{equ1.7}) is replaced by $\limsup$, then $E$ can
be taken as a set of zero upper density.
\end{thm}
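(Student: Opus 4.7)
The plan is to reduce (\ref{equ1.2+}) to (\ref{equ1.8+}), establish the latter in the single case $h=1$, and then combine with a diagonal exhaustion to produce a common exceptional set. The reduction uses the shift inequality already recorded in the introduction: it gives both $T(r,w_c)\leq T(r+|c|,w)(1+o(1))+O(1)$ and, applied to $w=(w_c)_{-c}$, $T(r,w)\leq T(r+|c|,w_c)(1+o(1))+O(1)$, so (\ref{equ1.2+}) will follow once (\ref{equ1.8+}) is known for $h=|c|$ off a suitable set. The extension of (\ref{equ1.8+}) from $h=1$ to arbitrary $h>0$ is then handled by iteration together with the monotonicity sandwich $T(r+\lfloor h\rfloor,w)\leq T(r+h,w)\leq T(r+\lceil h\rceil,w)$, enlarging the exceptional set by finitely many integer shifts without affecting its zero lower (resp.\ upper) density.

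For $h=1$, fix $\delta>0$ and set
$$E_\delta=\{r\geq 1:T(r+1,w)\geq(1+\delta)T(r,w)\}.$$
I would show that the $\liminf$ (resp.\ $\limsup$) form of (\ref{equ1.7}) forces $E_\delta$ to have zero lower (resp.\ upper) density. Suppose instead that $E_\delta$ has upper density $\alpha>0$, and let $N=\{n\in\mathbb{N}:E_\delta\cap[n,n+1)\neq\emptyset\}$. For each $n\in N$, any $r\in E_\delta\cap[n,n+1)$ together with monotonicity of $T$ gives $T(n+2,w)\geq(1+\delta)T(n,w)$. Along a sequence $R_m\to\infty$ with $|E_\delta\cap[0,R_m]|\geq \alpha R_m/2$, one has $|N\cap[0,R_m]|\geq\alpha R_m/2$; a parity pigeonhole extracts a chain of even elements $n_1<n_2<\cdots<n_k$ in $N$ with $n_{j+1}-n_j\geq 2$ and $k\geq\alpha R_m/4$. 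Iteration yields $T(n_k,w)\geq(1+\delta)^{k-1}T(n_1,w)$ and therefore
$$\frac{\log T(R_m,w)}{R_m}\geq\frac{\alpha\log(1+\delta)}{4}+o(1),$$
contradicting $\limsup\log T(r,w)/r=0$. The lower-density case produces the same bound for every sufficiently large $R$, contradicting $\liminf\log T(r,w)/r=0$. The argument for $N(r,w)$ is identical and the contradiction transfers back to $T$ via $N(r,w)\leq T(r,w)+O(1)$; if $w$ is entire the claim for $N$ is vacuous.

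To assemble a single exceptional set, diagonally exhaust: choose $R_n\uparrow\infty$ so that $|E_{1/n}\cap[R_n,R]|/R<1/n$ for all $R\geq R_n$ (or along a subsequence in the $\limsup$ case), and set $E=\bigcup_n\bigl(E_{1/n}\cap[R_n,R_{n+1})\bigr)$. For $r\notin E$ with $r\in[R_n,R_{n+1})$ one has $T(r+1,w)<(1+1/n)T(r,w)$, hence $T(r+1,w)\sim T(r,w)$ as $r\notin E\to\infty$; a direct count verifies that $E$ has zero lower (resp.\ upper) density. The same $E$ works for $N$ upon replacing $E_{1/n}$ by the union of its $T$- and $N$-versions. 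The main obstacle I anticipate is the passage from the continuous set $E_\delta\subset[1,\infty)$ to an integer chain of length proportional to $R_m$: the parity pigeonhole is the cleanest device I see to enforce the separation $n_{j+1}-n_j\geq 2$ needed to consolidate the one-step gain $(1+\delta)$ into a genuine exponential lower bound on $T(R_m,w)$; without it one would have to handle clusters of consecutive points of $E_\delta$ by hand. Everything else amounts to routine bookkeeping.
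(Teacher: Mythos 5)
Your argument is correct in substance but follows a genuinely different route from the paper. The paper deduces Theorem~\ref{thm1.2} from Theorem~\ref{thm2.1}, whose engine is Lemma~\ref{lem2.1}: using the logarithmic convexity of $T$ one bounds $T(t)-T(r)$ by $tT'(t)\log(t/r)$ and shows, by integrating $T'/T$ against a slowly decreasing comparison function $\tau(r)\approx\sqrt{\log T(r)/r}$, that the set where $T'/T\ge\tau$ has zero lower density; this yields the quantitative estimate $T(r+\phi^\delta(r))\le(1+4\phi(r)^{\delta-1/2})T(r)$ off a single exceptional set, with an error term that already tends to $0$, so no limit over $\delta$ is required. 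You instead work with the discrete sets $E_\delta=\{r:T(r+1,w)\ge(1+\delta)T(r,w)\}$ and show by a chaining/pigeonhole argument that positive upper (resp.\ lower) density of $E_\delta$ forces exponential growth of $T$ along a sequence (resp.\ for all large $r$), contradicting (\ref{equ1.7}); this uses only monotonicity of $T$, not convexity, and it directly overcomes the obstruction the authors raise in their remark after Lemma~\ref{lem2.1} (that $T(r+1)<CT(r)$ with a fixed $C>1$ does not give $T(r+1)\sim T(r)$) by sending $\delta\to0$ through a diagonal exhaustion. What you lose is the explicit rate: Lemma~\ref{lem2.1} is reused in Theorems~\ref{thm1.3+} and~\ref{thm3.2}, where the quantitative decay $\phi(r)^{\delta-1/2}$ is essential, so your argument proves Theorem~\ref{thm1.2} but could not substitute for the paper's lemma elsewhere. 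Your reduction of (\ref{equ1.2+}) to (\ref{equ1.8+}) via the shift inequality and $w=(w_c)_{-c}$, and the passage from $h=1$ to general $h$ by a finite union of integer translates of $E$, coincide with the paper's strategy and are sound.

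One step needs repair. In the diagonalization you require $R_n$ with $|E_{1/n}\cap[R_n,R]|/R<1/n$ \emph{for all} $R\ge R_n$; that is available only in the zero \emph{upper} density case, and your parenthetical has the two cases interchanged. In the $\liminf$ case no such $R_n$ need exist (a set of zero lower density can have density close to $1$ at arbitrarily large radii). The construction is saved by the nesting $E_{1/1}\subseteq E_{1/2}\subseteq\cdots$: for $R<R_{N+1}$ one has $E\cap[1,R]\subseteq E_{1/N}\cap[1,R]$, so it suffices to choose $R_{N+1}$ just beyond a radius $R_N'\ge R_N$ at which $E_{1/N}$ has density below $1/N$ (such radii exist by the already-established zero lower density of each $E_{1/N}$); then $E$ has density below $1/N$ at $R_N'$, hence zero lower density. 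With this correction, and granting the shift inequality $T(r,w_c)\le(1+o(1))T(r+|c|,w)+O(1)$ that the paper proves in Section~2, your proof is complete.
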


An upper density of a set $E$ in $[1,+\infty)$ is defined as
$$\overline{\rm dens}E=\limsup\limits_{r\to\infty}\frac{1}{r}\int_{E[1,r]}{\rm d}t,$$
where $E[1,r]=E\cap[1,r]$; a lower density $\underline{\rm dens}E$
is the above equality with $\liminf$ replacing $\limsup$.

In fact, (\ref{equ1.7}) in Theorem \ref{thm1.2} is true for all
non-decreasing functions $T(r)$ that are convex with respect to
$\log r$. Above $N(r,w)$ denotes the integrated counting function of $w$.
If $N(r,w)\not=0$, that is to say, $w$ has at least a pole, then
$N(r,w)$ is a non-decreasing convex function with respect to $\log
r$. And $N(r,w)$ satisfies (\ref{equ1.7}) if (\ref{equ1.7}) holds
for $T(r,w)$. Therefore, in this case, Theorem \ref{thm1.2} applies
to $N(r,w)$. By noting that $N(r,w)\leq T(r,w)$, we have
$$T(r,w_c)=(1+\varepsilon(r))T(r,w),\
N(r,w_c)=(1+\varepsilon'(r))N(r,w),$$ where $\varepsilon(r)$(resp.\
$\varepsilon'(r))\to 0 (r\not\in E\to\infty)$ and it is defined in
the proof of Theorem~\ref{thm1.2} and expressed in terms of $T(r,w)$
(resp.\ $N(r,w)$). Then we have
$$m(r,w_c)=(1+\varepsilon(r))m(r,w)+(\varepsilon(r)-\varepsilon'(r))N(r,w).$$
This produces that
\begin{equation}\label{equ1.10}m(r,w_c)-m(r,w)=o(T(r,w)).\end{equation}

From the Clunie type theorems obtained in the literature, we know
that $m(r,w)$ is often small with respect to $T(r,w)$, roughly
speaking, it may be lost sight of. When this happens, $w$ has high
pole density in the sense of Nevanlinna theory, which is a starting
point of making a further classification of some difference
equations by singularity confinement type arguments, but we cannot
confirm $m(r,w_c)\sim m(r,w) (r\not\in E\to\infty)$, basically. In
some cases (see the discussion in Section 4), the equivalent
relation $m(r,w_c)\sim m(r,w) (r\not\in E\to\infty)$ is often
useful.

However, usually the most useful estimate is concerned with the
proximity function of the ratio of $w_c$ and $w$, known as the lemma
of the logarithmic differences, which occupies the same important
position as the lemma of logarithmic derivative does in Nevanlinna
theory. For example, in establishing the Clunie type theorems  and
the Nevanlinna second main theorem for differences
(\cite{RodRistoTohge} and \cite{KLT}), it is a crucial part. Such
lemmas for meromorphic functions of finite order were established
independently by Halburd and the second author in \cite{RodRisto}
and Chiang and Feng in~\cite{ChiangFeng}.

We cannot obtain a valuable estimate of
$m\left(r,\frac{w_c}{w}\right)$ from (\ref{equ1.10}), although we
know that
$$m(r,w_c)\leq m(r,w)+m\left(r,\frac{w_c}{w}\right)$$
and
$$m(r,w)\leq m(r,w_c)+m\left(r,\frac{w}{w_c}\right).$$
Indeed, conversely, an estimate of $m\left(r,\frac{w_c}{w}\right)$
will produce an estimate of $|m(r,w_c)-m(r,w)|$.

In \cite{RodRistoTohge}, Halburd, the second author and Tohge established a version of the lemma of the logarithmic differences
covering the result of finite order and dealing with the case of
infinite order.

\medskip

\noindent {\bf Theorem A.}\ {\sl Let $w(\not\equiv 0)$ be a
meromorphic function with the hyper-order $\zeta=\zeta(w)<1$. Then
for each $\varepsilon>0$, we have
\begin{equation}\label{equ1.10+}
m\left(r,\frac{w_c}{w}\right)=o\left(\frac{T(r,w)}{r^{1-\zeta-\varepsilon}}\right), \
r\not\in E.
\end{equation}
If the order $\rho(w)<+\infty$, then
$$m\left(r,\frac{w_c}{w}\right)\leq\frac{(\log r)^{3+\varepsilon}}{r}T(r,w),\
r\not\in E.$$ Here $E\subset [1,+\infty)$ is a set of finite
logarithmic measure, i.e., $\int_E\frac{{\rm d}t}{t}<\infty.$}

\medskip

The hyper-order $\zeta(w)$ of $w$ is defined by
$$\zeta(w):=\limsup_{r\to\infty}\frac{\log\log T(r,w)}{\log r}$$
and the lower hyper-order by replacing $\limsup$ with $\liminf$.
Obviously, (\ref{equ1.10+}) is invalid when $\zeta=1$. In this
paper, we consider the case when the growth does not exceed the
lower hyper-order $1$ and minimal lower hyper-type, i.e.,
(\ref{equ1.7}). We can establish the following result, which is a
special case of Theorem \ref{thm3.2} below.

\begin{thm}\label{thm1.3+}\ Let $w(\not\equiv 0)$ be a meromorphic function. Assume that (\ref{equ1.7}) holds.
Then for a
complex number $c$ and a real number $\delta$ with $0<\delta<1/2$,
we have
\begin{equation}\label{equ1.11+-}m\left(r,\frac{w_c}{w}\right)\leq 436e(1+|c|)\left(\min_{1\leq t\leq r}\frac{\log
T(t,w)}{t}\right)^{\delta}T(r,w),\ r\not\in E_\delta,\end{equation}
where $E_\delta$ is a subset of $[1,+\infty)$ with $\underline{\rm
dens}E_\delta=0$ and independent of $c$. If (\ref{equ1.7}) holds for
$\limsup$, then $\overline{\rm dens}E_\delta=0$.
\end{thm}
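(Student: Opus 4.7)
The plan is to adapt the Poisson--Jensen approach used to prove Theorem~A, but with the auxiliary radius $R$ tuned to the slow-growth regime (\ref{equ1.7}) rather than to the hyper-order $<1$ regime.

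As a first step I would apply the Poisson--Jensen formula to $\log|w(\cdot)|$ on the disc $|\zeta|\leq R$ for a generic $R>r+|c|$, evaluate the resulting identity at $\zeta=z$ and at $\zeta=z+c$, and subtract. This expresses $\log|w(z+c)/w(z)|$ as a kernel-difference along $|\zeta|=R$ plus Blaschke-type differences over the zeros and poles of $w$ in $|\zeta|<R$. The kernel difference is controlled by $|c|$ times the gradient of the Poisson kernel, and the Blaschke differences by elementary estimates. Integrating the resulting pointwise bound over $|z|=r$ with respect to $d\theta/(2\pi)$ yields a preliminary inequality of Halburd--Korhonen--Tohge type,
\[
m\!\left(r, \frac{w_c}{w}\right) \;\leq\; C\,|c|\left(\frac{r+R}{(R-r-|c|)^{2}}+\frac{1}{R-r-|c|}\right) T(R+|c|, w),
\]
valid for every admissible $R$ with an absolute constant $C$.

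Next I would choose $R=r+s(r)$, where $s(r)$ is tied to the function $\phi(r):=\min_{1\leq t\leq r}\log T(t,w)/t$ that appears in (\ref{equ1.11+-}). Setting $s(r)=r\,\phi(r)^{\beta}$ for an exponent $\beta=\beta(\delta)\in\bigl(0,(1-\delta)/2\bigr)$ makes the prefactor in the above bound comparable to $\phi(r)^{\delta}$; a careful bookkeeping of the constants produces the explicit numerical factor $436\,e$ in (\ref{equ1.11+-}). Under (\ref{equ1.7}) the function $\phi(r)$ is non-increasing and tends to zero, while the trivial lower bound $\phi(r)\geq \log T(1,w)/r$ ensures that $s(r)\to\infty$ and $s(r)/r\to 0$ simultaneously, which is exactly what the kernel estimate requires.

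The last and most delicate step is to replace $T(R+|c|, w)=T(r+s(r)+|c|, w)$ by $T(r,w)$ up to a bounded factor. Because the shift $s(r)+|c|$ now diverges with $r$, Theorem~\ref{thm1.2} cannot be invoked directly. Instead I would prove a Borel-type lemma asserting that the set of $r$ on which $T(r+s(r)+|c|,w)>2\,T(r,w)$ has zero lower density under (\ref{equ1.7}) (and zero upper density if the $\limsup$ form is assumed). This rests on the convexity of $T(r,w)$ in $\log r$ together with essentially the same averaging argument that underlies Theorem~\ref{thm1.2}, adapted so that the variable shift $s(r)$ does not inflate the exceptional set beyond the tolerated zero-density class. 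I expect this final step to be the main obstacle: one has to match the growth of $s(r)$ to the decay of $\phi(r)$ while keeping the exceptional set in the required class, uniformly in the parameter $c$.
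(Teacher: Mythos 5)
Your high-level strategy (a Poisson--Jensen difference identity, a radius $R=r+s(r)$ adapted to the growth, and a Borel-type comparison of $T(R+|c|,w)$ with $T(r,w)$ off a zero-density set) is indeed the paper's strategy, but the two concrete choices you make are exactly the two places where the argument has to be different, and as stated neither works.

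First, the preliminary inequality you start from contains the term $|c|\frac{r+R}{(R-r-|c|)^{2}}\,T(R+|c|,w)$, and this quadratic denominator is fatal under hypothesis (\ref{equ1.7}) alone: to force $\frac{r+R}{(R-r-|c|)^{2}}\lesssim\phi(r)^{\delta}$ (with your $\phi(r)=\min_{1\le t\le r}\log T(t,w)/t$) you must take $R-r\gtrsim\sqrt{r}\,\phi(r)^{-\delta/2}\ge\sqrt{r}$, and for a step of that size the comparison $T(R+|c|,w)=O(T(r,w))$ fails for \emph{all} large $r$ already for $T(r,w)=\exp(\sqrt{r})$, a function satisfying (\ref{equ1.7}) in the $\limsup$ form and for which the theorem is nonetheless true. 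The paper never produces this term: in (\ref{equ2.9})--(\ref{equ2.10}) the contribution of the zeros and poles is not bounded by $n(R)\cdot O\bigl(|c|R/(R-r)^{2}\bigr)$ but by the differences $N(v,\cdot)-N(r,\cdot)$ of \emph{integrated} counting functions over a thin annulus, which are then controlled by $V(r)^{-1}N(R,\cdot)$ using logarithmic convexity of $N$ and a second, intermediate radius $v$ with $v-r$ of the order of $(R-r)^{1/2}$. This two-scale decomposition is the essential new ingredient that lets the hyper-order reach $1$, and it is absent from your sketch.

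Second, even granting a good preliminary inequality, your step $s(r)=r\,\phi(r)^{\beta}$ is far too large for the final Borel step. Take $T(r,w)=\exp(r/\log r)$: then $\phi(r)=1/\log r$, so $s(r)=r/(\log r)^{\beta}$ and $T(r+s(r),w)/T(r,w)\ge\exp\bigl(c\,r/(\log r)^{1+\beta}\bigr)\to\infty$ for every large $r$; the exceptional set of your proposed lemma has density one, not zero. What (\ref{equ1.7}) and convexity actually yield (this is Lemma \ref{lem2.1}) is $T'(r)/T(r)\le\phi(r)^{1/2}$ off a set of zero lower density, so the admissible step is $s(r)=\phi(r)^{-\delta}$ with $\delta<1/2$ --- a power of the \emph{reciprocal} of your $\phi$, not $r$ times a power of $\phi$; since $\phi(r)\gtrsim 1/r$, your step exceeds the admissible one by a factor tending to infinity. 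This is also where the restriction $0<\delta<1/2$ in the statement comes from. So the ``main obstacle'' you flag at the end is not a technical refinement: both the preliminary inequality and the choice of $R$ must be replaced, and doing so is essentially the content of Lemma \ref{lem2.1} and the proof of Theorem \ref{thm3.2}.
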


Clearly, (\ref{equ1.7}) implies that
$$\min_{1\leq t\leq r}\frac{\log
T(t,w)}{t}\to 0 \ (r\to\infty).$$ Therefore, from (\ref{equ1.11+-}) it
follows that for any $c$ with $0\leq |c|\leq
o\left(\max\limits_{1\leq t\leq r}\frac{t}{\log
T(t,w)}\right)^\delta$, we have
$$m\left(r,\frac{w_c}{w}\right)=o(T(r,w)),\ r\not\in E_\delta\to\infty.$$
Obviously, we can require $o\left(\max\limits_{1\leq t\leq
r}\frac{t}{\log T(t,w)}\right)^\delta\to\infty \ (r\to\infty).$

The exception set $E_\delta$ has zero lower density, however, the
situation we often meet, for example, in the second main theorem of
Nevanlinna, is that the exception set is of finite logarithmic
measure. We will pay a price on the growth of meromorphic functions if
we ask the exception set of finite logarithmic measure.

\begin{thm}\label{thm1.3}\ Let $w(\not\equiv 0)$ be a meromorphic function. Then for a
complex number $c$, a real number $\delta$ with $0<\delta<1/2$ and
$\varepsilon>0$, we have
\begin{equation*}\label{equ1.11+}m\left(r,\frac{w_c}{w}\right)\leq 436e(1+|c|)
\left(\frac{(\log\log T(r,w))^{1+\varepsilon}\log
T(r,w)}{r}\right)^{\delta}T(r,w),\ r\not\in E,\end{equation*} where
$E$ is a subset of $[1,+\infty)$ with finite logarithmic measure.
\end{thm}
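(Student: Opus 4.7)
The plan is to deduce Theorem~\ref{thm1.3} from the general Theorem~\ref{thm3.2} (of which Theorem~\ref{thm1.3+} is the special case corresponding to hypothesis (\ref{equ1.7})) by trading the zero-density exceptional set for one of finite logarithmic measure, and by converting the quantity $\min_{1\leq t\leq r}\log T(t,w)/t$ appearing in Theorem~\ref{thm1.3+} into the pointwise expression in $r$ that occurs in Theorem~\ref{thm1.3}. The conversion is powered by a Borel-type growth lemma applied to the Nevanlinna characteristic.

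Concretely, I would set $\psi(r)=(\log\log T(r,w))^{1+\varepsilon}\log T(r,w)$ and invoke the classical Borel--Nevanlinna lemma applied to $T(r,w)$: since $\int^{\infty}dx/(x(\log x)^{1+\varepsilon})<\infty$, the inequality
\[
T\bigl(r+r/\psi(r),w\bigr)\leq 2T(r,w)
\]
holds for all $r$ outside a set $E\subset[1,+\infty)$ of finite logarithmic measure. Writing $r'=r+r/\psi(r)$, one then has $T(r',w)\leq 2T(r,w)$, and by taking $t=r$ as a candidate for the minimum on $[1,r']$,
\[
\min_{1\leq t\leq r'}\frac{\log T(t,w)}{t}\leq\frac{\log T(r,w)}{r}.
\]
Applying Theorem~\ref{thm3.2} at $r'$ and using the monotonicity $m(r,w_c/w)\leq m(r',w_c/w)$ together with these two bounds, the right-hand side of the estimate of Theorem~\ref{thm3.2} collapses, up to a harmless constant factor absorbed into $436e(1+|c|)$, to the bound claimed in Theorem~\ref{thm1.3}.

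The chief obstacle is reconciling the two exceptional sets. Theorem~\ref{thm1.3+} as stated provides only a set of zero lower density, which is not directly of finite logarithmic measure; one therefore needs Theorem~\ref{thm3.2} to be formulated as a pointwise inequality conditional on a quantitative smallness of $\log T(t,w)/t$ on $[1,r']$, rather than as a statement about a specific exceptional set. The Borel lemma then supplies such a controlled interval for $r\notin E$, so that the finite-logarithmic-measure exception set of Theorem~\ref{thm1.3} arises entirely from the Borel step. A secondary bookkeeping task is to verify that the factor $1+|c|$, the numerical constant $436e$, and the exponent $\delta$ all survive unchanged through the substitution $r\mapsto r'$; this should follow because passing from $r$ to $r'$ alters neither $c$ nor the universal constants, only the arguments at which $T$ is evaluated.
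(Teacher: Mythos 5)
Your proposal correctly locates the source of the finite-logarithmic-measure exceptional set: the Borel-type estimate $T(r+r/\psi(r),w)\leq 2T(r,w)$ with $\psi(r)=(\log\log T(r,w))^{1+\varepsilon}\log T(r,w)$ off a set of finite logarithmic measure is precisely the paper's inequality (\ref{equ3.7-}), which the authors derive from the Edrei--Fuchs lemma (Lemma~\ref{lem1.1}) rather than from the classical Borel--Nevanlinna lemma, but the two are interchangeable here. However, the mechanism you propose for combining this with the logarithmic-difference estimate has two genuine gaps. First, the step ``$m(r,w_c/w)\leq m(r',w_c/w)$'' is unjustified: proximity functions are not monotone in $r$ (e.g.\ $m(r,1/z)=\log^+(1/r)$ decreases), so you cannot transfer the estimate from $r'$ back down to $r$ this way. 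Second, applying Theorem~\ref{thm3.2} (or Theorem~\ref{thm1.3+}) as a black box at $r'$ inherits its own exceptional set of zero lower density, and the preimage of that set under $r\mapsto r'$ is not of finite logarithmic measure; you acknowledge that one would need a ``pointwise inequality conditional on quantitative smallness'' instead, but that reformulation is exactly the substantive content that must be proved, and your proposal does not supply it. (A smaller issue: the extra factor $2$ cannot be ``absorbed'' if the constant is to remain exactly $436e$, and a route through Theorem~\ref{thm1.3+} would also import the hypothesis (\ref{equ1.7}), which Theorem~\ref{thm1.3} does not assume.)

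What the paper actually does is different in structure. Theorem~\ref{thm1.3} is the direct specialization of the first case of Theorem~\ref{thm3.2} to $u=\log|w|$, since $(1/\phi_\varepsilon(r))^\delta$ is literally the expression in Theorem~\ref{thm1.3}; no conversion from the $\min_{1\leq t\leq r}\log T(t,w)/t$ form is needed. The proof of Theorem~\ref{thm3.2} first establishes an \emph{unconditional} two-radius inequality
$$m_s(r,u_c-u)\leq (436|c|+220)\left(\frac{1}{\hat{\phi}(r)}\right)^{\delta/2}T_s\bigl(r+\hat{\phi}^{\delta}(r),u\bigr),$$
valid for all admissible $r$ with no exceptional set, via the Poisson--Jensen formula and Green's function estimates on the disk $B(0,R)$ with $R=r+\hat\phi^\delta(r)$ (Theorem~\ref{thm2.1+} and the estimates (\ref{equ2.6}), (\ref{equ3.6+-})); only then is the Borel-type step invoked, with $\hat\phi=\phi_\varepsilon$, to replace $T_s(r+\phi_\varepsilon(r),u)$ by $eT_s(r,u)$ outside a set of finite logarithmic measure. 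In other words, the larger radius enters on the \emph{right-hand side} of the estimate at the original $r$, not through evaluating the whole theorem at a shifted point, which is what makes the exceptional set arise solely from the Borel step. To repair your argument you would need to prove (or cite) this intermediate unconditional inequality rather than the packaged Theorem~\ref{thm1.3+}.
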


Let us make a remark. If
\begin{equation}\label{equ1.10++}\limsup\limits_{r\to\infty}\frac{(\log r)^{1+\varepsilon}\log
T(r,w)}{r}=0,\end{equation} then
$$\frac{(\log\log T(r,w))^{1+\varepsilon}\log T(r,w)}{r}\to 0
 \ (r\to\infty).$$ The $w$ satisfying (\ref{equ1.10++}) may have the
hyper-order $\zeta(w)=1$. The following is a consequence of Theorem
\ref{thm1.3}.

\begin{thm}\label{thm1.4}\ Let $w(\not\equiv 0)$ be a meromorphic function. Assume that (\ref{equ1.10++}) holds for some
$\varepsilon>0$. Then for $0<\delta<1/2$ and for any complex number
$c$ with
$$0\leq |c|\leq o\left(\frac{r}{(\log
r)^{1+\varepsilon}\log T(r,w)}\right)^\delta,$$ we have
\begin{equation}\label{equ1.13}m\left(r,\frac{w_c}{w}\right)=o(T(r,w)),\ r\not\in E.\end{equation} Here
$E$ is a subset of $[1,+\infty)$ with finite logarithmic measure.
\end{thm}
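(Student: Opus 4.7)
The plan is to derive Theorem~\ref{thm1.4} as a direct corollary of Theorem~\ref{thm1.3}, applied with the same $\delta$ and the same $\varepsilon$. First I would invoke Theorem~\ref{thm1.3} to obtain, outside a set $E\subset[1,+\infty)$ of finite logarithmic measure,
$$m\!\left(r,\frac{w_c}{w}\right) \leq 436\,e\,(1+|c|)\left(\frac{(\log\log T(r,w))^{1+\varepsilon}\log T(r,w)}{r}\right)^{\!\delta} T(r,w).$$
The remaining task is purely asymptotic: under the hypotheses on the growth of $w$ and on $|c|$, I need to show that the prefactor multiplying $T(r,w)$ tends to $0$ as $r\not\in E\to\infty$.

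The key observation will be that \eqref{equ1.10++}, being a $\limsup=0$ statement for a nonnegative quantity, actually forces the genuine limit
$$B(r):=\frac{(\log r)^{1+\varepsilon}\log T(r,w)}{r}\longrightarrow 0\quad (r\to\infty).$$
In particular $\log T(r,w)\leq r/(\log r)^{1+\varepsilon}$ for all sufficiently large $r$, and therefore $\log\log T(r,w)\leq \log r-(1+\varepsilon)\log\log r\leq \log r$ for such $r$. Plugging this elementary domination back into the estimate from Theorem~\ref{thm1.3} replaces the $(\log\log T(r,w))^{1+\varepsilon}$ factor by $(\log r)^{1+\varepsilon}$, giving
$$m\!\left(r,\frac{w_c}{w}\right)\leq 436\,e\,(1+|c|)\,B(r)^{\delta}\,T(r,w)$$
for all large $r\notin E$. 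This is precisely the shape that matches the hypothesis on $|c|$.

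To finish, I would combine the two ingredients. The assumption $|c|=o(B(r)^{-\delta})$ gives $|c|\,B(r)^{\delta}=o(1)$, while independently $B(r)^{\delta}=o(1)$, so $(1+|c|)\,B(r)^{\delta}\to 0$ and \eqref{equ1.13} follows. I do not expect any real obstacle: the whole argument is a bookkeeping translation of the quantitative estimate in Theorem~\ref{thm1.3} into the qualitative corollary required. The only nontrivial point is the elementary domination $\log\log T(r,w)\leq \log r$, which is exactly where the power $(\log r)^{1+\varepsilon}$ (rather than a bare $\log r$) in \eqref{equ1.10++} is used to keep the bounds consistent.
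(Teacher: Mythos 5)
Your proposal is correct and follows essentially the same route as the paper, which derives Theorem~\ref{thm1.4} directly from Theorem~\ref{thm1.3} via the remark that \eqref{equ1.10++} forces $(\log\log T(r,w))^{1+\varepsilon}\log T(r,w)/r\to 0$ (precisely your domination $\log\log T(r,w)\leq\log r$). The only detail worth adding is that the exceptional set inherited from Theorem~\ref{thm1.3} does not depend on $c$ (as made explicit in Theorem~\ref{thm1.3+} and in the proof of Theorem~\ref{thm3.2}), which is needed since $c$ is allowed to vary with $r$ here.
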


We can take $o\left(\frac{r}{(\log r)^{1+\varepsilon}\log
T(r,w)}\right)\to\infty (r\to\infty)$ under (\ref{equ1.10++}). Then
$c$ is allowed to tend to $\infty$ as long as $r$ goes to $\infty$.
If $c$ is a fixed complex number, then we still have (\ref{equ1.13})
even if (\ref{equ1.10++}) is replaced by
\begin{equation*}\lim_{\varepsilon\to 0^+}\limsup\limits_{r\to\infty}\frac{(\log r)^{1+\varepsilon}\log
T(r,w)}{r}=0.\end{equation*}
In fact, under this condition, set for $\varepsilon>0$
$$S(\varepsilon)=\limsup_{r\to\infty}\frac{(\log\log T(r,w))^{1+\varepsilon}\log
T(r,w)}{r}.$$ Then
$\lim\limits_{\varepsilon\to0^+}S(\varepsilon)=0$. Assume without
any loss of generality that $S(\varepsilon)>0, \forall\
\varepsilon>0$. There exists an $r_\varepsilon>0$ such that for
$r\geq r_\varepsilon$, we have
$$\frac{(\log\log T(r,w))^{1+\varepsilon}\log
T(r,w)}{r}<2S(\varepsilon).$$ In view of Theorem \ref{thm1.3}, there
exists a subset $E_\varepsilon$ of $[1,+\infty)$ with finite
logarithmic measure such that $$m\left(r,\frac{w_c}{w}\right)\leq
436e(1+|c|)(2S(\varepsilon))^{\delta}T(r,w),\ r\not\in E_\varepsilon
,\ r\geq r_\varepsilon.$$ Take a sequence $\{\varepsilon_n\}$ with
$0<\varepsilon_{n+1}<\varepsilon_n\to 0 \ (n\to\infty)$ and a sequence
$\{r_n\}$ of positive numbers with $r_n\geq r_{\varepsilon_n}$ and
$r_{n+1}\geq r_n\to\infty \ (n\to\infty)$ such that
$$\int_{F_n}\frac{{\rm d}t}{t}<\frac{1}{2^n},\
F_n=E_{\varepsilon_n}\cap[r_n,r_{n+1}).$$ Set
$E=[1,r_1)\cup(\cup_{n=1}^\infty F_n)$. Then $E$ has  finite
logarithmic measure. Then for $r\not\in E$ with $r_n\leq r<r_{n+1}$
we have
$$m\left(r,\frac{w_c}{w}\right)\leq
436e(1+|c|)(2S(\varepsilon_n))^{\delta}T(r,w)=o(T(r,w)),$$ by noting
that $S(\varepsilon_n)\to 0 \ (r\to\infty)$.

\

 In this paper, we will consider the differences of
$\delta$-subharmonic functions. Thus the results we obtain will
apply to the meromorphic functions, the holomorphic curves and the
algebroid functions. That is to say, the results stated above are
special cases of the results we obtain in the sequel.
Therefore, the second main theorems of holomorphic curves for the
difference operator (\cite{RodRistoTohge} and \cite{KLT}) will be
available for the growth not exceeding the (lower) hyper-order one
and minimal (lower) hyper-type. And we can also establish the
difference analogue of the second main theorem for algebroid
functions with the hyper-order at most one and having minimal
hyper-type.

\section{shift invariance of $N_s$ and $T_s$}

A meromorphic function can be considered as a holomorphic curve of
dimension one and an algebroid function can be also expressed by the
coefficients of its corresponding algebraic equation as a
holomorphic curve. However, we can observe holomorphic curves from
the potential point of view. Therefore, this leads us to consider
the difference of subharmonic functions.

Let $u$ be a $\delta$-subharmonic function on $\mathbb{C}$. Let $D$
be a domain on $\mathbb{C}$ surrounded by finitely many piecewise
analytic curves. Then for any point $z\in D$, we have
\begin{equation}\label{equ2.1}\ u(z)=\frac{1}{2\pi}\int_{\partial
D}u(\zeta)\frac{\partial G_D(z,\zeta)}{\partial {\bf n}}{\rm d}s-
\frac{1}{2\pi}\int_{D}G_D(z,w)\Delta u(w),
\end{equation}
where $G_D(z,w)$ is the Green function for $D$ with singularity at
$z$, ${\bf n}$ is the inner normal of $\partial D$ with respect to
$D$ and $\Delta$ is the Laplacian. (Please see \cite{Rainsford} for
the basic theory of subharmonic functions). Define
$$m_s(D,z,u):=\frac{1}{2\pi}\int_{\partial
D}u^+(\zeta)\frac{\partial G_D(z,\zeta)}{\partial {\bf n}}{\rm
d}s,$$
$$N_s(D,z,u):=
\frac{1}{2\pi}\int_{D}G_D(z,w)(\Delta u)^-(w),$$ where
$u^+=\max\{0,u\}$ and $(\Delta u)^-$ is the negative variation of
$\Delta u$, and
$$T_s(D,z,u):=m_s(D,z,u)+N_s(D,z,u).$$
$T_s$ is known as the Nevanlinna characteristic, where the
subscript $s$ stands for ``subharmonic'' function, which is used to distinguish
it from that of a characteristic of a meromorphic function. That is to say, the
functions $m_s, N_s$ and $T_s$ are defined for $\delta$-subharmonic
functions. The Poisson-Jensen formula (\ref{equ2.1}) implies that
$$u(z)=T_s(D,z,u)-T_s(D,z,-u).$$

Now let $D$ be a disk $B(0,r)=\{z:|z|<r\}$. We write $m_s(r,u),
N_s(r,u)$ and $T_s(r,u)$ for $m_s(D,0,u), N_s(D,0,u)$ and
$T_s(D,0,u)$, respectively. As we know, $N_s(r,u)$ is non-decreasing
and logarithmically convex in $r$, i.e., convex in $\log r$ and has
the form
$$N_s(r,u)=\int_0^r\frac{n_s(t,u)-n_s(0,u)}{t}{\rm d}t+n_s(0,u)\log r,$$
where $n_s(t,u)=\frac{1}{2\pi}(\Delta u)^-(B(0,t))$.

Assume that $u$ is subharmonic. Then $(\Delta u)^-=0$ and so
applying (\ref{equ2.1}) to the subharmonic function $u^+$ yields
$$T_s(r,u)=m_s(r,u)=N_s(r,-u^+)+u^+(0).$$
By noting that $-u^+$ is $\delta$-subharmonic, it follows that
$T_s(r,u)$ is non-decreasing and logarithmically convex in $r$. For
a subharmonic function $u$, we often use the Cartan characteristic
which is defined by
$$\mathcal{T}_s(r,u):=\frac{1}{2\pi}\int_0^{2\pi}u(re^{i\theta}){\rm
d}\theta=N_s(r,-u)+u(0).$$ Hence, $\mathcal{T}_s(r,u)$ is
non-decreasing and logarithmically convex in $r$. It is enough to
focus on the discussion of the shift invariance of $N_s$ of $u$ by
$c$ for our purposes in this section.

Let $\mathcal{P}^n(\mathbb{C})$ be the $n$-dimensional complex
projective space, that is,
$\mathcal{P}^n(\mathbb{C})=\mathbb{C}^{n+1}\setminus\{0\}/\sim,$
where $\sim$ is the equivalence relation defined so that
$(a_0,a_1,...,a_n)\sim (b_0,b_1,...,b_n)$ if and only if
$(a_0,a_1,...,a_n)=\lambda(b_0,b_1,...,b_n)$ for some
$\lambda\in\mathbb{C}\setminus\{0\}.$ We write the equivalence class
of $(a_0,a_1,...,a_n)$ as $[a_0:a_1:...:a_n]$.

A map $f:\mathbb{C}\rightarrow \mathcal{P}^n(\mathbb{C})$ is called
a holomorphic curve, if we can write $f=[f_0:f_1:...:f_n]$ where
every $f_j$ is an entire function, at least one of $f_j$ is
non-constant and they have no common zeros on $\mathbb{C}$, and
${\bf{f}}=(f_0,f_1,...,f_n)$ is called a reduced representation of
$f$.

Let $f$ be a holomorphic curve on the complex plane $\mathbb{C}$
with a reduced representation ${\bf f}=(f_0,f_1,...,f_n)$. Set
$$v_f(z)=\bigvee_{j=0}^n\log |f_j(z)|.$$
Then $v_f$ is subharmonic on $\mathbb{C}$. The Cartan characteristic
of $f$ is defined by
$$\mathcal{T}(r,f)=\mathcal{T}_s(r,v_f)- v_f(0)=\frac{1}{2\pi}\int_0^{2\pi}v_f(re^{i\theta}){\rm d}\theta- v_f(0).$$
Then $\mathcal{T}(r,f)$ is a positive logarithmically convex
increasing real-valued function in~$r$. If at least one of $f_j
(j=0,1,2,...,n)$ is transcendental, then $\mathcal{T}(r,f)/\log
r\to\infty \ (r\to\infty)$. We can write the characteristic of an
algebroid function as a special holomorphic curve, by this way. We
can consider a meromorphic function $f$ as a holomorphic curve and
have its characteristic $\mathcal{T}(r,f)$, which is known as the
Ahlfors-Shimizu characteristic. The Nevanlinna characteristic
$T(r,f)$ of a meromorphic function $f$ is
$$T(r,f)=T_s(r,\log|f|),$$
by noting that $\log|f|$ is a $\delta$-subharmonic function. The
following inequality is well known (see, for example, \cite{Hayman}
and \cite{Zheng})
$$|T(r,f)-\mathcal{T}(r,f)|\leq\log 2+\log^+|f(0)|.$$

In a word, all of the following results are valid with
$\mathcal{T}_s(r,u)$ for subharmonic functions $u$, with
$\mathcal{T}(r,f)$ for holomorphic curves $f$ including the
algebroid functions, and with $T(r,f)$ for meromorphic functions
$f$.

\begin{thm}\label{thm2.1}\ Let $u$ be a $\delta$-subharmonic function on
$\mathbb{C}$. Then we have
$$N_s(r,u_c)\sim N_s(r,u),\ \text{as}\ r\not\in E\to\infty,$$
where $E$ is a subset of $[1,+\infty)$ such that
\begin{itemize}
\item[(1)] if
\begin{equation}\label{equ2.2++}\liminf\limits_{r\to\infty}\frac{\log
N_s(r,u)}{r}=0,\end{equation} then $\underline{\rm dens}E=0$;
\item[(2)] if (\ref{equ2.2++}) holds for $\limsup$ instead of $\liminf$, then $\overline{\rm
dens}E=0$;
\item[(3)] if
\begin{equation}\label{equ2.2++-}\limsup\limits_{r\to\infty}\frac{(\log r)^{1+\varepsilon}\log
N_s(r,u)}{r}=0,\end{equation} then $E$ has finite logarithmic
measure.
\end{itemize}
\end{thm}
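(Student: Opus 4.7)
The plan is to reduce the claim to shift-invariance of the radius for the nondecreasing, logarithmically convex function $\varphi(r):=N_s(r,u)$, and then control the three growth regimes by integrating log-differences. The first step is a radius reduction: since $n_s(t,u_c)=\frac{1}{2\pi}(\Delta u)^-(B(-c,t))$ and $B(0,t-|c|)\subset B(-c,t)\subset B(0,t+|c|)$, one has $n_s(t-|c|,u)\leq n_s(t,u_c)\leq n_s(t+|c|,u)$ for $t\geq|c|$. Rewriting $N_s$ as the logarithmic potential $\int\log(r/|w|)\,d(\Delta u)^-(w)$ (so the $1/t$ singularity near the origin is absorbed) then yields $N_s(r-|c|,u)-O(\log r)\leq N_s(r,u_c)\leq N_s(r+|c|,u)+O(\log r)$; these errors are negligible against $N_s(r,u)\to\infty$, so it suffices to prove $\varphi(r+h)\sim\varphi(r)$ outside the claimed exceptional set, with $h=|c|$.

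Fix $\eta>0$ and let $E_\eta:=\{r\geq 1:\varphi(r+h)>(1+\eta)\varphi(r)\}$. Since $\varphi$ is nondecreasing, $\log(1+\eta)\,\chi_{E_\eta}(r)\leq\log\varphi(r+h)-\log\varphi(r)$. Changing variable $s=r+h$ telescopes the integral:
\[
\int_1^R\bigl[\log\varphi(r+h)-\log\varphi(r)\bigr]\,dr=\int_R^{R+h}\log\varphi(s)\,ds-\int_1^{1+h}\log\varphi(s)\,ds\leq h\log\varphi(R+h)+O(1),
\]
hence $\log(1+\eta)\,|E_\eta\cap[1,R]|\leq h\log\varphi(R+h)+O(1)$. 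Dividing by $R$ and taking $\liminf$ (resp.\ $\limsup$) settles (1) and (2) directly from $\liminf\log\varphi(R)/R=0$ (resp.\ $\limsup$). For (3), I would instead integrate against $dr/r$ and use $\tfrac{1}{s-h}=\tfrac{1}{s}+\tfrac{h}{s(s-h)}$ to obtain
\[
\int_1^R\frac{\log\varphi(r+h)-\log\varphi(r)}{r}\,dr=O(1)+h\int_{1+h}^{R+h}\frac{\log\varphi(s)}{s(s-h)}\,ds;
\]
hypothesis (\ref{equ2.2++-}) gives $\log\varphi(s)=o(s/(\log s)^{1+\varepsilon})$, so the integrand is $O(1/(s(\log s)^{1+\varepsilon}))$ with convergent integral, whence $\int_{E_\eta}dr/r<\infty$.

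To produce a single exceptional set $E$ independent of $\eta$, I would diagonalize with $\eta_n=1/n$: pick $R_n\uparrow\infty$ so that the portion of $E_{\eta_n}$ in $[R_n,R_{n+1})$ contributes at most $2^{-n}$ to the relevant density or logarithmic measure, and set $E=[1,R_1)\cup\bigcup_{n\geq 1}(E_{\eta_n}\cap[R_n,R_{n+1}))$; for $r\notin E$ with $r\geq R_n$ one has $\varphi(r+h)\leq(1+1/n)\varphi(r)$, giving $\varphi(r+h)\sim\varphi(r)$. The part I expect to require most care is the convergence of the tail integral in case (3): the extra $(\log r)^{1+\varepsilon}$ factor in (\ref{equ2.2++-}) is exactly what pushes $\log\varphi(s)/[s(s-h)]$ past the borderline divergent integrand $1/(s\log s)$, which explains the logarithmic gap between cases (2) and (3). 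The radius-reduction step is also delicate, because the $1/t$ singularity of the defining integral of $N_s$ at the origin forces one to use the logarithmic-potential representation rather than a naive pointwise comparison of integrands.
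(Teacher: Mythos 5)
Your core growth lemma is correct, and it takes a genuinely different route from the paper. The paper proves two calculus lemmas for this step: Lemma \ref{lem2.1}, which exploits the logarithmic convexity of $N_s$ through the inequality $T(t)\leq T(r)+T'(t)t\log(t/r)$ and then bounds the set where $T'/T$ is large, and Lemma \ref{lem2.2} (via the Edrei--Fuchs Lemma \ref{lem1.1}) for the finite-logarithmic-measure case; moreover the paper lets the shift be the slowly growing quantity $\phi^{\delta}(r)$ rather than a fixed $h$. That machinery buys quantitative rates that are reused later (Theorem \ref{thm3.2}, and shifts $c$ allowed to grow with $r$), at the price of convexity hypotheses and two separate lemmas. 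Your telescoping of $\int_1^R[\log\varphi(r+h)-\log\varphi(r)]\,dr$ (resp.\ against $dr/r$ with the partial-fractions split $\tfrac{1}{s-h}=\tfrac1s+\tfrac{h}{s(s-h)}$) uses only monotonicity of $\varphi$ and treats all three density statements uniformly; the $\eta_n=1/n$ diagonalization you sketch is essentially the one the paper itself performs after Theorem \ref{thm1.4}, and it does go through for lower density provided the $R_n$ are chosen so that each window $[R_n,R_{n+1})$ contains a radius witnessing the smallness of $|E_{\eta_n}\cap[1,\cdot]|$.

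There is, however, a genuine error in your radius-reduction step: the bound $N_s(r,u_c)\leq N_s(r+|c|,u)+O(\log r)$ is false in general. Starting from $n_s(t,u_c)\leq n_s(t+|c|,u)$, the substitution $s=t+|c|$ together with $\frac{1}{s-|c|}=\frac1s+\frac{|c|}{s(s-|c|)}$ leaves an extra term comparable to $|c|\int_1^{r}n_s(t,u)t^{-2}\,dt$; if, say, $n_s(t,u)\asymp t^2$ this is $\asymp r$, not $O(\log r)$ (this is precisely the source of the $O(r^{\rho-1+\varepsilon})$ error in the Chiang--Feng identity quoted in Section 2). The logarithmic-potential representation does not rescue the claim: it absorbs the singularity at the shifted origin into a genuine $O(\log r)$, but away from that point the discrepancy $\log(|w|/|w+c|)\leq\log\bigl(1+|c|/|w+c|\bigr)$ still integrates against the Riesz measure to a term of the same polynomial size. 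The paper's computation (\ref{equ2.3+})--(\ref{equ2.6-}) controls this term by integration by parts and the logarithmic convexity of $N_s$, turning it into a multiplicative factor $1+O(1/\log r)$, i.e.\ an error $o(N_s(r+|c|,u))$ --- which is all your subsequent argument needs. So the defect is local and repairable, but as written the step does not hold and must be replaced by an estimate of this kind.
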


Let us point out that Theorem~\ref{thm2.1} holds for $T_s$, $\mathcal{T}_s$, $T$ and $\mathcal{T}$ instead of $N_s$. Theorem
\ref{thm1.2} follows from Theorem \ref{thm2.1}.

 In order to prove Theorem
\ref{thm2.1}, we need a basic lemma some of whose idea essentially
comes from \cite{GoldbergeOstrovskii}.

\begin{lem}\label{lem2.1} Let $T(r)$ be a non-decreasing positive
function in $[1,+\infty)$ and logarithmically convex with
$T(r)\to+\infty \ (r\to+\infty)$. Assume that
\begin{equation}\label{equ2.2+}\liminf\limits_{r\to\infty}\frac{\log
T(r)}{r}=0.\end{equation} Set
$$\phi(r)=\max\limits_{1\leq t\leq r}\left\{\frac{t}{\max\{1,\log T(t)\}}\right\}.$$
Then given a real number $\delta$ with $0<\delta<\frac{1}{2}$, we
have
$$T(r+\phi^\delta(r))\leq T(r)+4\phi(r)^{\delta-\frac{1}{2}}T(r),\ r\not\in E_\delta,$$
where $E_\delta$ is a subset of $[1,+\infty)$ with the zero lower
density. And $E_\delta$ has the zero density if (\ref{equ2.2+})
holds for $\limsup$.
\end{lem}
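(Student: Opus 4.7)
The plan is to combine the log-convexity of $T$ with a Borel-type covering argument driven by the hypothesis (\ref{equ2.2+}). The convexity of $F(x) := T(e^x)$ will be used twice: first to reduce the target inequality to an auxiliary doubling inequality, and then (via the chord-from-origin consequence of convexity) to convert the covering bound into a density statement.

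First I would apply the three-point inequality for convex functions,
\[
F(x_2) - F(x_1) \le \frac{x_2 - x_1}{x_3 - x_2}\bigl(F(x_3) - F(x_2)\bigr), \qquad x_1 < x_2 < x_3,
\]
at the points $x_1 = \log r$, $x_2 = \log(r + \phi^\delta(r))$ and $x_3 = \log(r + 2\phi^{1/2}(r))$. Because $\delta < 1/2$ and $\phi(r) \to \infty$ under (\ref{equ2.2+}), the logarithmic-increment ratio simplifies to $\phi^{\delta - 1/2}(r)/2$ up to a factor of $1 + o(1)$. Consequently, once I establish the auxiliary doubling bound
\[
T(r + 2\phi^{1/2}(r)) \le 9\,T(r), \qquad r \notin E_\delta,
\]
with $\underline{\mathrm{dens}}\,E_\delta = 0$, the target inequality $T(r + \phi^\delta(r)) \le T(r) + 4\phi^{\delta - 1/2}(r) T(r)$ follows at once (the constant $9$ is chosen so that $T(r + 2\phi^{1/2}) - T(r + \phi^\delta) \le 8T(r)$ produces exactly the constant $4$).

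The substantive step is the Borel-type argument for the auxiliary bound. Let $E_0$ denote its exceptional set, and greedily extract from $E_0$ a maximal disjoint family of intervals $I_j = [s_j, s_j + 2\phi^{1/2}(s_j)]$. Monotonicity of $T$ together with the defining inequality of $E_0$ gives $T(s_{j+1}) \ge 9\,T(s_j)$, so the number $k$ of such intervals inside $[1,r]$ satisfies $k \le \log T(r)/\log 9$; the monotonicity of $\phi$ then yields
\[
|E_0 \cap [1, r]| \le 2 k \, \phi^{1/2}(r) \le C\,\phi^{1/2}(r)\,\log T(r).
\]

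To convert this into the density conclusion, use the convexity of $F$ a second time. Since $(F(x) - F(0))/x$ is non-decreasing, $T(r)/\log r$ is non-decreasing, and a short extremal analysis in the definition of $\phi(r)$ yields the sharp comparability $\phi(r) \asymp r/\log T(r)$ for $r$ large. Inserted into the covering bound this produces
\[
\frac{|E_0 \cap [1, r_n]|}{r_n} \le C\left(\frac{\log T(r_n)}{r_n}\right)^{1/2} \longrightarrow 0
\]
along the sequence $r_n \to \infty$ furnished by (\ref{equ2.2+}), yielding $\underline{\mathrm{dens}}\,E_\delta = 0$. In the $\limsup$-variant the same inequality holds for every large $r$ because $\limsup \log T(r)/r = 0$ is in fact a genuine limit, and so $\overline{\mathrm{dens}}\,E_\delta = 0$. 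The main obstacle is the upper bound in the comparability $\phi(r) \le C\,r/\log T(r)$: without this sharpening (which genuinely uses log-convexity), the naive $\phi(r) \le r$ would close the argument only under the strictly stronger hypothesis $\log T(r)/r^{1/2} \to 0$.
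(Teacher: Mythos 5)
Your overall architecture (reduce the target inequality to a doubling bound $T(r+2\phi^{1/2}(r))\le 9T(r)$ via the three-chord inequality for the convex function $T(e^x)$, then prove the doubling bound by a greedy covering argument) is a genuinely different route from the paper, which instead bounds the logarithmic derivative $T'/T$ by a non-increasing comparison function $\tau$ outside a set $F$ whose measure is controlled by $\int_F T'/T\le\log T(r)$. Both routes ultimately hinge on the same normalization, and that is exactly where your write-up has a genuine gap: the claimed ``sharp comparability $\phi(r)\asymp r/\log T(r)$ for $r$ large'' is false for general log-convex $T$, and your justification is a non sequitur. Log-convexity gives that $(T(r)-T(1))/\log r$ is non-decreasing; it says nothing about the function $t\mapsto t/\log T(t)$ whose running maximum defines $\phi$. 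If $T$ stays nearly flat (say $rT'(r)$ constant) on a long interval ending at $t_0$ and then accelerates rapidly --- which log-convexity permits, since it only forces $rT'(r)$ to be non-decreasing --- then for a long range of $r>t_0$ the maximum in $\phi(r)$ is attained near $t_0$ and $\phi(r)\gg r/\log T(r)$. With only the trivial bound $\phi(r)\le r$, your covering estimate $|E_0\cap[1,r]|\le C\phi^{1/2}(r)\log T(r)$ yields $o(r)$ only under $\log T(r)=o(r^{1/2})$, as you yourself observe.

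The repair, for the $\liminf$ case, is that you do not need comparability at every large $r$ but only along one sequence, since $\underline{\mathrm{dens}}$ is a $\liminf$. Writing $\phi(r)=1/\min_{1\le t\le r}\bigl(\max\{1,\log T(t)\}/t\bigr)$, hypothesis (2.4) supplies a sequence $r_n\to\infty$ of running minima of $\max\{1,\log T(t)\}/t$, and at such points the identity $\phi(r_n)=r_n/\max\{1,\log T(r_n)\}$ holds \emph{by definition}, with no convexity needed; inserting this at $r=r_n$ closes your estimate. This is precisely the device the paper uses (its auxiliary function $\hat\tau(r)=\sqrt{\log^+T(r)/r}$ and the sequence of its running minima). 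Two smaller points: your $\limsup$ variant is not established either, because $\log T(r)/r\to 0$ controls the numerator of $\phi^{1/2}(r)\log T(r)/r=(\log T(r)/r)\big/\bigl(\min_{t\le r}\log T(t)/t\bigr)^{1/2}$ but not the running minimum in the denominator, so ``the same inequality holds for every large $r$'' does not follow and a separate argument (e.g.\ a monotone majorant of $\hat\tau$ from above) is needed; and your chord computation produces $4+o(1)$ rather than $4$ as the final constant unless you take the doubling constant slightly below $9$.
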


\begin{proof}\
Since $T(r)$ is logarithmically convex, it follows from the basic
inequality of a convex function that for any $t, r\in [1,+\infty)$,
we have
$$T(r)\geq T(t)+\frac{{\rm d}T(t)}{{\rm d}\log t}(\log r-\log t)$$
so that
$$T(t)\leq T(r)+\frac{{\rm d}T(t)}{{\rm d}t}t\log \frac{t}{r}.$$
Take $t=r+\phi^{\delta}(r)$. By noting that $\phi(r)\leq r$, we have
\begin{eqnarray}\label{equ2.4-}T(r+\phi^{\delta}(r))&\leq &
T(r)+\frac{T'(r+\phi^{\delta}(r))}{T(r+\phi^{\delta}(r))}(r+\phi^{\delta}(r))
\log\left(1+\frac{\phi^{\delta}(r)}{r}\right)T(r+\phi^{\delta}(r))\nonumber\\
&\leq&
T(r)+\frac{T'(r+\phi^{\delta}(r))}{T(r+\phi^{\delta}(r))}2\phi^{\delta}(r)T(r+\phi^{\delta}(r)).\end{eqnarray}
Defining
$$\hat{\tau}(r)=\sqrt{\frac{\log^+ T(r)}{r}},\ r\in
[1,+\infty),$$ it follows that $\hat{\tau}$ is continuous in $[1,+\infty)$. From
(\ref{equ2.2+}) we can find a sequence of positive numbers $\{r_n\}$
such that $r_1=1$, $r_n<r_{n+1}\to\infty \  (n\to\infty)$ and
$\hat{\tau}(r_n)=\min\limits_{1\leq t\leq r_n}\hat{\tau}(t)$. It is
obvious that $0<\hat{\tau}(r_{n+1})\leq \hat{\tau}(r_n)\to 0 \ 
(n\to\infty).$ Define $\tau(r)=\hat{\tau}(r_n)$, $r_{n-1}< r\leq
r_{n}$. It is easy to see that $\tau(r)$ is non-increasing in
$[1,+\infty)$, $\tau(r)=\phi(r)^{-1/2}$ for $r=r_n$ and
$\tau(r)\leq\phi(r)^{-1/2}\to 0 \ (r\to\infty)$, as $\phi(r)\to\infty \ 
(r\to\infty)$ from (\ref{equ2.2+}).

Set
$$F=\left\{r\in [1,+\infty):\ \frac{T'(r)}{T(r)}\geq
\tau(r)\right\}.$$ There is an $r_0>1$ such that $T(r_0)\geq 1$. Then
\begin{eqnarray*} \log T(r)&\geq &\int_{r_0}^{r}(\log
T(t))'_t{\rm d}t\\
&=&\int_{r_0}^{r}\frac{T'(t)}{T(t)}{\rm d}t\\
&\geq &\int_{F[r_0,r]}\frac{T'(t)}{T(t)}{\rm d}t\\
&\geq &\int_{F[r_0,r]}\tau(t){\rm d}t\\
&\geq&\tau(r)\int_{F[r_0,r]}{\rm d}t,
\end{eqnarray*}
where $F[r_0,r]=F\cap [r_0,r]$ so that, when $r=r_n$, we have
\begin{eqnarray*}\frac{1}{r}\int_{F[1,r]}{\rm
d}t&=&\frac{1}{r}\int_{F[1,r_0]}{\rm
d}t+\frac{1}{r}\int_{F[r_0,r]}{\rm d}t\\
&\leq&\frac{1}{r}\int_1^{r_0}{\rm d}t+
\frac{\hat{\tau}^2(r)}{\tau(r)}\\
&=&\frac{r_0-1}{r}+\tau(r)\to 0 \  (r\to\infty).\end{eqnarray*} This
yields that the lower density of $F$ satisfies $\underline{{\rm
dens}}(F)=0$. Set $E_\delta=\{r:r+\phi^\delta(r)\in F\}$. Since
$r+\phi^\delta(r)\sim r \ (r\to\infty)$, we have $\underline{{\rm
dens}}(E_\delta)=0$. There is an $R_0>1$ such that
$2\phi^\delta(r)\tau(r)\leq 2\phi^{\delta-1/2}(r)<1/2$ for $r>R_0$.
Then for $\forall\ r\not\in E_\delta\cup[1,R_0]$, we have
\begin{eqnarray*}T(r+\phi^\delta(r))&\leq&
T(r)+2\phi^\delta(r)\tau(r+\phi^\delta(r))T(r+\phi^\delta(r))\\
&\leq&
T(r)+2\phi^\delta(r)\tau(r)T(r+\phi^\delta(r))\\
&<&T(r)+\frac{1}{2}T(r+\phi^\delta(r)),\end{eqnarray*} which reduces
to $T(r+\phi^\delta(r))<2T(r)$. Thus for $r\not\in
E_\delta\cup[1,R_0]$,
$$T(r+\phi^\delta(r))\leq T(r)+4\phi^\delta(r)\tau(r)T(r)\leq T(r)+4\phi(r)^{\delta-1/2}T(r).$$
\end{proof}

Let us make a remark on Lemma \ref{lem2.1}. As we know, there are a few lemmas on the growth of real functions that were established in 
Nevanlinna theory, so we wonder if from them we could obtain Lemma
\ref{lem2.1}. After checking this, we have not found the possibility. Set
$$E=\{r\geq 1:\ T(r+1)\geq CT(r)\},\ C>1.$$
In view of Lemma 3.3.1 in \cite{CherryYe}, we have
\begin{eqnarray*}\int_{E[e,r]}\frac{{\rm d}t}{t}&\leq
&\frac{1}{T^{-1}(e)}+\frac{1}{\log C}\int_e^{T(r)}\frac{{\rm
d}t}{tT^{-1}(t)}\\
&=&\frac{1}{T^{-1}(e)}+\frac{1}{\log
C}\int_{T^{-1}(e)}^{r}\frac{T'(t)}{T(t)t}{\rm
d}t\\
&=&\frac{1}{T^{-1}(e)}+\frac{1}{\log C}\left(\frac{\log
T(r)}{r}-\frac{1}{T^{-1}(e)}+\int_{T^{-1}(e)}^{r}\frac{\log
T(t)}{t^2}{\rm d}t\right).\end{eqnarray*} Therefore, if
(\ref{equ2.2+}) holds for $\limsup$, then $E$ has the upper
logarithmic density $\overline{\log dens}(E)=0$. As we know that
$\overline{\log dens}(E)\leq \overline{dens}(E)$, from Lemma 3.3.1
in~\cite{CherryYe} we cannot have $\overline{dens}(E)=0$. And from
$T(r+1)<CT(r)$ with $C>1$ for $r\not\in E$ we cannot directly obtain
$T(r+1)\sim T(r) \ (r\to\infty).$

If the growth of $T(r)$ is a little slower, we can obtain that the
exceptional set $E_\delta$ has finite logarithmic measure, which
appears often in the second main theorems of Nevanlinna. The
following is Lemma 10.1 of Edrei and Fuchs \cite{EdreiFuchs}.

\begin{lem}\label{lem1.1}\ Let $\psi$ and $\varphi$ be two positive
functions on $[r_0,+\infty)$ with $r_0>0$. Assume that $\psi$ is
non-decreasing while $\varphi$ is non-increasing and that $\psi(r_1)>r_0+1$ for some
$r_1>r_0$. Set
$$E=\{r\geq r_1:\ \psi(r+\varphi(\psi(r)))\geq\psi(r)+1\}.$$
Then we have
$$\int_{E[a,A]}{\rm
d}t\leq\int_{\psi(a)-1}^{\psi(A)}\varphi(t){\rm d}t,$$ provided that
$r_1\leq a<A<+\infty$, where $E[a,A]=E\cap [a,A]$.
\end{lem}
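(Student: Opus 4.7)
I would prove the lemma by a greedy covering of $E[a,A]$. The guiding observation is that membership of $r$ in $E$ witnesses a jump of at least $1$ in $\psi$ across the short interval $[r,r+\varphi(\psi(r))]$; since $\psi$ is non-decreasing and bounded on $[a,A]$ by $\psi(A)$, only a controlled number of disjoint such jumps can fit, and the monotonicity of $\varphi$ then converts the resulting sum of interval lengths into the integral on the right-hand side.

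\textbf{Construction of the covering.} Assuming $E[a,A]\neq\emptyset$ (the other case is trivial), I would define inductively $s_1=\inf E[a,A]$ and $s_{k+1}=\inf(E\cap[s_k+\varphi(\psi(s_k)),A])$, terminating as soon as this set becomes empty. This produces a finite sequence $s_1<s_2<\cdots<s_K$, and by construction the intervals $[s_k,s_k+\varphi(\psi(s_k))]$ together cover $E[a,A]$, giving
\begin{equation*}
\int_{E[a,A]}{\rm d}t\leq\sum_{k=1}^{K}\varphi(\psi(s_k)).
\end{equation*}
Applying the defining inequality of $E$ at $s_k$ together with the monotonicity of $\psi$, I would then verify $\psi(s_{k+1})\geq\psi(s_k+\varphi(\psi(s_k)))\geq\psi(s_k)+1$, so inductively $\psi(s_k)\geq\psi(s_1)+k-1$.

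\textbf{Sum-to-integral estimate.} Since $\varphi$ is non-increasing,
\begin{equation*}
\varphi(\psi(s_k))\leq\varphi(\psi(s_1)+k-1)\leq\int_{\psi(s_1)+k-2}^{\psi(s_1)+k-1}\varphi(t)\,{\rm d}t,
\end{equation*}
and summing telescopes to $\int_{\psi(s_1)-1}^{\psi(s_1)+K-1}\varphi(t)\,{\rm d}t$. Because $s_K\leq A$ forces $\psi(s_1)+K-1\leq\psi(s_K)\leq\psi(A)$, and because $s_1\geq a$ forces $\psi(s_1)\geq\psi(a)$, this integral is dominated by $\int_{\psi(a)-1}^{\psi(A)}\varphi(t)\,{\rm d}t$, which is the desired bound.

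\textbf{Main obstacle.} The only genuine subtlety is that neither $\psi$ nor $\varphi$ is assumed continuous, so the infima $s_k$ need not lie in $E$ and the inequality $\psi(s_{k+1})\geq\psi(s_k)+1$ needs justification at points where $\psi$ has jumps. I would dispatch this by approximating each $s_k$ from the right by a sequence of points of $E$ and passing to the limit using monotonicity, or equivalently by replacing $\psi$ with its right-continuous regularization, which alters $E$ only on a negligible set. Once this wrinkle is ironed out the rest of the argument is essentially bookkeeping.
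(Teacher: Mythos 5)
The paper gives no proof of this lemma at all --- it is quoted as Lemma 10.1 of Edrei and Fuchs \cite{EdreiFuchs} --- so there is nothing in-paper to compare against; your greedy covering argument is in substance the classical Edrei--Fuchs proof, and the skeleton (covering $E[a,A]$ by the intervals $[s_k,s_k+\varphi(\psi(s_k)))$, the jump $\psi(s_{k+1})\geq\psi(s_k)+1$, the sum-to-integral step, and the endpoint checks $\psi(s_1)\geq\psi(a)$, $\psi(s_1)+K-1\leq\psi(A)$, with $\psi(r_1)>r_0+1$ guaranteeing $\varphi$ is defined down to $\psi(a)-1$) is correct. The one point that genuinely needs the care you flag is that the infima $s_k$ need not lie in $E$; the cleaner of your two proposed repairs is the first one, implemented by choosing $s_k\in E$ within $\varepsilon 2^{-k}$ of the relevant infimum (so the defining inequality of $E$ holds at $s_k$ verbatim) and letting $\varepsilon\to 0$ in the final measure estimate, whereas the right-continuous-regularization route is murkier than you suggest because replacing $\psi$ by its regularization also changes $\varphi(\psi(r))$ and hence the set $E$ itself in a way that is not obviously negligible.
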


For a non-decreasing positive function $T(r)$ on $[1,+\infty)$ with
$T(r)>e$, define
$$\psi(r)=\log T(e^r),\ \varphi(r)=\frac{1}{r(\log
r)^{1+\varepsilon}}.$$ Then
\begin{eqnarray*}\ \psi(\log r+\varphi(\psi(\log r)))&=&\log T\left(re^{1/(\log
T(r)(\log\log T(r))^{1+\varepsilon})}\right)\\
&\leq &\psi(\log
r)+1\\
&=&\log eT(r), \quad \log r\not\in F_\varepsilon,
\end{eqnarray*}
where $F_\varepsilon$ is a subset of $[0,+\infty)$ such that
$$\int_{F_\varepsilon[1,r]}{\rm
d}t\leq\int_{1}^{\infty}\varphi(t){\rm d}t<+\infty,$$ that is to
say, $F_\varepsilon$ has finite linear measure. Set
$E_\varepsilon=\{r>e:\ \log r\in F_\varepsilon\}$. Then
$E_\varepsilon$ has finite logarithmic measure and
\begin{equation}\label{equ2.7}T\left(re^{1/(\log T(r)(\log\log
T(r))^{1+\varepsilon})}\right)\leq eT(r),\ r\not\in
E_\varepsilon.\end{equation} Here and henceforth, define
$$\phi_{T,\varepsilon}(r)=\frac{r}{(\log\log T(r))^{1+\varepsilon}\log T(r)}$$
for a non-decreasing positive function $T(r)$ with $T(r)>e$ and
$\varepsilon>0$. Without confusion in the context, we will write
$\phi_\varepsilon$ for $\phi_{T,\varepsilon}$. Then (\ref{equ2.7})
produces
\begin{equation}\label{equ3.7-}T(r+\phi_\varepsilon(r))\leq eT(r),\ r\not\in
E_\varepsilon.\end{equation}

As an application of (\ref{equ3.7-}), we establish the following,
which is a supplement of Lemma \ref{lem2.1}. We emphasize that these
two lemmas apply to the discussion of infinite shifting.

\begin{lem}\label{lem2.2}\ Let $A$ be a non-decreasing positive function on
$[1,+\infty)$ and
$$T(r)=\int_1^r\frac{A(t)}{t}{\rm d}t,\ r\geq 1.$$ Set
$\phi_\varepsilon(r)=\phi_{T,\varepsilon}(r)$ for a given
$\varepsilon>0$. Then for all $1\leq r<R\leq
r+\phi_\varepsilon(r)^\delta$, $0<\delta< 1$, we have
\begin{equation}\label{equ2.8}
T(R)\leq
T(r)+4e\left(\frac{1}{\phi_\varepsilon(r)}\right)^{1-\delta}T(r),\
r\not\in E_\varepsilon,\end{equation} where
$\int_{E_\varepsilon}\frac{{\rm d}t}{t}<+\infty$ if
$\phi_\varepsilon(r)>2^{1/(1-\delta)}$ and $\phi_\varepsilon(r)<r$.
If $R=r+h$ for a fixed positive real number $h$, then $\delta=0$ can
be chosen in (\ref{equ2.8}) and in addition, if $T(r)$ is of finite
order $\rho$, then
$$T(r+h)\leq T(r)+4hK\frac{T(r)}{r},\ r\not\in E,$$ where $E$ has
the logarithmic density less than $\rho\frac{\log 2}{\log K}$.
\end{lem}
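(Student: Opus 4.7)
The key observation is that the hypothesis $T(r)=\int_1^r A(t)/t\,dt$ with $A$ non-decreasing makes $T$ logarithmically convex: since $dT/d\log r=A(r)$ is non-decreasing, $T$ is convex as a function of $\log r$. This gives access to the classical slope inequality for convex functions, which will be the main tool. The other ingredient is the estimate (\ref{equ3.7-}) already in hand, stating that $T(r+\phi_\varepsilon(r))\le eT(r)$ for all $r$ outside a set of finite logarithmic measure.

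For the main inequality (\ref{equ2.8}), I would fix $r$ outside the exceptional set of (\ref{equ3.7-}), set $r_1=r+\phi_\varepsilon(r)$, and note that the assumption $\phi_\varepsilon(r)>2^{1/(1-\delta)}$ forces $\phi_\varepsilon(r)>1$, so $R\le r+\phi_\varepsilon(r)^\delta\le r_1$. The slope inequality in the variable $\log r$ then gives
$$T(R)-T(r)\le\frac{\log(R/r)}{\log(r_1/r)}(T(r_1)-T(r))\le(e-1)\frac{\log(R/r)}{\log(r_1/r)}T(r).$$
Applying $\log(1+x)\le x$ in the numerator and $\log(1+x)\ge x/(1+x)$ in the denominator, together with $\phi_\varepsilon(r)<r$, I would obtain $\log(R/r)\le\phi_\varepsilon(r)^\delta/r$ and $\log(r_1/r)\ge\phi_\varepsilon(r)/(2r)$. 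Substituting yields $T(R)-T(r)\le 2(e-1)\phi_\varepsilon(r)^{\delta-1}T(r)<4e\phi_\varepsilon(r)^{\delta-1}T(r)$, which is exactly (\ref{equ2.8}). The exceptional set $E_\varepsilon$ of the lemma is the exceptional set of (\ref{equ3.7-}) possibly augmented by the $r$'s where the side conditions fail; the side conditions ensure that this augmentation still has finite logarithmic measure.

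The fixed-$h$ case is handled by the same slope inequality with $R-r=h$ replacing $\phi_\varepsilon(r)^\delta$ in the numerator; this is the sense in which ``$\delta=0$ can be chosen in (\ref{equ2.8})''. For the finite-order improvement I would change the reference interval from $[r,r_1]$ to $[r,2r]$. Define $E=\{r\ge 1:T(2r)\ge KT(r)\}$; then for $r\in E$ one has $\log T(2r)-\log T(r)\ge\log K$, and integrating against $dr/r$ with the substitution $s=2r$ gives
$$\log K\int_{E\cap[1,R]}\frac{dr}{r}\le\int_1^R\frac{\log T(2r)-\log T(r)}{r}\,dr=\int_R^{2R}\frac{\log T(s)}{s}\,ds-\int_1^2\frac{\log T(r)}{r}\,dr.$$
Because $T$ has order $\rho$, $\log T(s)\le(\rho+o(1))\log s$, and $\int_R^{2R}(\log s)/s\,ds=(\log 2)\log R+O(1)$, so the right-hand side is at most $(\rho+o(1))\log 2\cdot\log R$, proving that the upper logarithmic density of $E$ is bounded by $\rho\log 2/\log K$. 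For $r\notin E$ with $r\ge h$, applying the slope inequality to the three points $r<r+h<2r$ gives $T(r+h)-T(r)\le(\log(1+h/r)/\log 2)(T(2r)-T(r))\le 2h(K-1)T(r)/r\le 4hKT(r)/r$.

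The main obstacle is the logarithmic density estimate in the finite-order part: passing from the pointwise jump $\log T(2r)-\log T(r)\ge\log K$ on $E$ to a bound on $\int_{E\cap[1,R]}dr/r$ with the exact constant $\rho\log 2/\log K$ requires the change-of-variable trick above and a careful use of the order assumption. By contrast, once the correct reference interval $[r,r+\phi_\varepsilon(r)]$ (or $[r,2r]$) has been identified, the upper bound on $T(R)-T(r)$ is essentially an unwinding of the convexity slope inequality combined with elementary logarithm estimates.
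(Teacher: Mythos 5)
Your proof is correct and follows essentially the same route as the paper: both arguments exploit the logarithmic convexity of $T$ (the paper via the monotonicity of $A$ and the bound $A(R)\log(R/r)\leq\frac{\alpha}{\alpha-1}\frac{R-r}{r}T(\alpha R)$ with $\alpha R=r+\phi(r)$, you via the equivalent slope inequality over the same reference interval $[r,r+\phi_\varepsilon(r)]$, resp. $[r,2r]$) combined with the estimate (\ref{equ3.7-}). The only divergence is that for the finite-order case you prove the density bound $\overline{\log\mathrm{dens}}\,E\leq\rho\log 2/\log K$ directly by the change of variables $s=2r$, whereas the paper simply cites Hayman's result; your version is a clean self-contained substitute.
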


\begin{proof}\ From the definition of $T$, it follows that
\begin{eqnarray*}\ T(R)&=&\int_1^R\frac{A(t)}{t}{\rm
d}t=T(r)+\int_r^R\frac{A(t)}{t}{\rm d}t\\
&\leq &T(r)+A(R)\log\frac{R}{r}\\
&\leq & T(r)+\frac{R-r}{r}\frac{1}{\log\alpha}\int_R^{\alpha
R}\frac{A(t)}{t}{\rm d}t\\
&\leq &T(r)+\frac{\alpha}{\alpha-1}\frac{R-r}{r}T(\alpha R),
\end{eqnarray*}
with $\alpha>1$ satisfying $\alpha R=r+\phi(r).$ Here and
henceforth, simply we write $\phi(r)$ for $\phi_\varepsilon(r)$. In
terms of (\ref{equ3.7-}), we have
$$T(\alpha R)=T(r+\phi(r))\leq eT(r), \ r\not\in
E_\varepsilon,$$ where $E_\varepsilon$ has  finite logarithmic
measure. Thus
\begin{eqnarray*}
\frac{\alpha}{\alpha-1}\frac{R-r}{r}&=&\frac{r+\phi(r)}{r+\phi(r)-R}\frac{R-r}{r}\\
&\leq
&\frac{r+\phi(r)}{\phi(r)-\phi(r)^\delta}\frac{\phi(r)^\delta}{r}\\
&=&\frac{r+\phi(r)}{r(\phi(r)^{1-\delta}-1)}\\
&\leq &4\left(\frac{1}{\phi(r)}\right)^{1-\delta},
\end{eqnarray*}
if $\phi^{1-\delta}(r)>2$ and $\phi(r)<r$. Combining the above
inequalities yield (\ref{equ2.8}). Let us take $R=r+h$, specially.
Then we have
\begin{eqnarray*}\frac{\alpha}{\alpha-1}\frac{R-r}{r}T(\alpha
R)&\leq&\frac{r+\phi(r)}{\phi(r)-h}\frac{h}{r}eT(r)\\
&\leq&\frac{2eh}{\phi(r)-h}T(r)\\
&\leq &\frac{4eh}{\phi(r)}T(r),
\end{eqnarray*}
if $\phi(r)>2h$ and $r\not\in E_\varepsilon$. If we take $\alpha$
such that $\alpha R=2r$ and $R=r+h$, then we have
$$\frac{\alpha}{\alpha-1}\frac{R-r}{r}T(\alpha
R)=\frac{2h}{r-h}T(2r)<4h\frac{T(2r)}{r}.$$ Therefore, if the order
$\rho$ of $T(r)$ is finite, then in view of a result in Hayman
\cite{Hayman1}, we have
$$T(2r)\leq KT(r),\ r\not\in W,$$
where $W$ has $\overline{\log{\rm dens}}W\leq\rho\frac{\log 2}{\log
K}.$ This produces the desired result.
\end{proof}

Now we are in position to prove Theorem \ref{thm2.1}.

\

\textit{Proof of Theorem 2.1.}\ Set $\mu=\frac{1}{2\pi}(\Delta u)^-$
and $\mu_c=\frac{1}{2\pi}(\Delta u_c)^-$. Since $B(0,t)+c\subset
B(0,t+|c|)$, we have
$$n_s(t,u_c)=\mu_c(B(0,t))=\mu(B(0,t)+c)\leq \mu(B(0,t+|c|))=n_s(t+|c|,u),$$
by noting that $\mu$ is a Borel measure. From the definition of
$N_s(r,u_c)$ it follows that
\begin{eqnarray*}N_s(r,u_c)&=&\int_1^r\frac{n_s(t,u_c)}{t}{\rm d}t+\int_0^1\frac{n_s(t,u_c)-n_s(0,u_c)}{t}{\rm d}t\nonumber\\
&\leq &\int_1^r\frac{n_s(t+|c|,u)}{t}{\rm d}t+\int_0^1\frac{n_s(t,u_c)-n_s(0,u_c)}{t}{\rm d}t\nonumber\\
&=&\int_{1+|c|}^{r+|c|}\frac{n_s(t,u)}{t-|c|}{\rm d}t+\int_0^1\frac{n_s(t,u_c)-n_s(0,u_c)}{t}{\rm d}t\nonumber\\
&=&N_s(r+|c|,u)+|c|\int_{1+|c|}^{r+|c|}\frac{n_s(t,u)}{t(t-|c|)}{\rm
d}t-n_s(0,u)\log(1+|c|)\end{eqnarray*}
\begin{eqnarray}\label{equ2.3+}
&\ &-\int_0^{1+|c|}\frac{n_s(t,u)-n_s(0,u)}{t}{\rm d}t+\int_0^1\frac{n_s(t,u_c)-n_s(0,u_c)}{t}{\rm d}t\nonumber\\
&=&N_s(r+|c|,u)+\frac{|c|}{r}N_s(r+|c|,u)+|c|\int_{1+|c|}^{r+|c|}\frac{N_s(t,u)}{(t-|c|)^2}{\rm
d}t+O(1),\end{eqnarray} where the equality
$\frac{1}{t-|c|}=\frac{1}{t}+\frac{|c|}{t(t-|c|)}$ was used and the
last equality resulted from the formula of integration by parts.

Since $N_s(r,u)$ is logarithmically convex,
$\frac{N_s(r,u)-N_s(1,u)}{\log r}$ is increasing for $r\geq 1$.
Therefore, for $1+|c|\leq t\leq r+|c|$, we have
$$\frac{N_s(t,u)-N_s(1,u)}{\log t}\leq
\frac{N_s(r+|c|,u)-N_s(1,u)}{\log(r+|c|)}.$$ This implies that
$$\frac{N_s(t,u)}{\log t}\leq\frac{N_s(1,u)}{\log t}+\frac{N_s(r+|c|,u)}{\log (r+|c|)}$$ so
that
\begin{eqnarray*}\int_{1+|c|}^{r+|c|}\frac{N_s(t,u)}{(t-|c|)^2}{\rm
d}t&\leq&\frac{N_s(r+|c|,u)}{\log
(r+|c|)}\int_{1+|c|}^{r+|c|}\frac{\log t}{(t-|c|)^2}{\rm d}t+N_s(1,u)\\
&\leq & \frac{\int_{1}^{r}\frac{\log (t+|c|)}{t^2}{\rm d}t}{\log
(r+|c|)}N_s(r+|c|,u)+N_s(1,u).\end{eqnarray*}

Noting that $$\int_1^\infty\frac{\log (t+|c|)}{t^2}{\rm
d}t=-\frac{\log (t+|c|)}{t}|_1^\infty+\int_1^\infty\frac{{\rm
d}t}{t(t+|c|)}$$$$=\log (1+|c|)+\frac{1}{|c|}\log (1+|c|),$$ from
the above inequalities it follows that
\begin{equation}\label{equ2.6-}
N_s(r,u_c)\leq \left(1+\frac{|c|}{r}+\frac{(1+|c|)\log(1+|c|)}{\log
(r+|c|)}\right)N_s(r+|c|,u)+N_s(1,u).
\end{equation}

From (\ref{equ2.2++}) it follows that $\phi(r)=\max\limits_{1\leq
t\leq r}\left\{\frac{t}{\max\{1,\log N_s(t,u)\}}\right\}\to\infty \
(r\to\infty)$. In terms of Lemma \ref{lem2.1}, we have
$$N_s(r,u_c)\leq \left(1+\varepsilon_1(r)\right)N_s(r,u), \ r\in F_1,$$
for a set $F_1$ of $r$ with upper density $1$ and
$\varepsilon_1(r)\to 0 \ (r\to\infty)$. The same argument implies that
$$N_s(r,u)\leq \left(1+\varepsilon_2(r)\right)N_s(r,u_c), \ r\in F_2,$$
for a set $F_2$ of $r$ with upper density $1$ and
$\varepsilon_2(r)\to 0 (r\to\infty)$. Then
$$N_s(r,u)\sim N_s(r,u_c), \text{as}\ r\in F_1\cap F_2\to\infty.$$
It is obvious that $F=F_1\cap F_2$ has upper density $1$ and
$E=[1,+\infty)\setminus F$ has zero lower density. The result (2)
follows the same way as above.

In view of Lemma \ref{lem2.2}, we can deduce the result (3) along
the same way as above, for from (\ref{equ2.2++-}) it follows that
$\phi_{N_s,\varepsilon}(r)\to\infty \ (r\to\infty).$

The proof of Theorem~\ref{thm2.1} is completed. 

\

In order to obtain a finer estimate than (\ref{equ2.6-}), we should
carefully discuss the estimate of the integral term in
(\ref{equ2.3+}).

If $N_s(r,u)$ has finite order $\lambda$, then it follows that for
$\varepsilon>0$ there exists a constant $K_\varepsilon>0$ such that
$N_s(r,u)<K_\varepsilon(r-|c|)^{\lambda+\varepsilon},\ r\geq 1+|c|.
$ For $\varepsilon>0$ with $\lambda-1+\varepsilon\not=0$, we can
estimate
\begin{eqnarray*}\ \int_{1+|c|}^{r+|c|}\frac{N_s(t,u)}{(t-|c|)^2}{\rm
d}t&\leq&K_\varepsilon\int_{1+|c|}^{r+|c|}\frac{(t-|c|)^{\lambda+\varepsilon}}{(t-|c|)^2}{\rm
d}t\\
&\leq&\frac{K_\varepsilon}{|\lambda-1+\varepsilon|}\left(1+r^{\lambda-1+\varepsilon}\right).
\end{eqnarray*}
This together with (\ref{equ2.3+}) produces
$$N_s(r,u_c)\leq N_s(r+|c|,u)+O(r^{\lambda-1+\varepsilon})+O(1).$$
By noting that
$$N_s(r+|c|,u)-N_s(r,u)=\int_r^{r+|c|}\frac{n_s(t,u)}{t}{\rm d}t$$
$$\leq
n_s(r+|c|,u)\log\left(1+\frac{|c|}{r}\right)=O(r^{\lambda-1+\varepsilon}),$$
we have
\begin{equation}\label{equ2.11+-}N_s(r+|c|,u)=N_s(r,u)+O(r^{\lambda-1+\varepsilon})+O(1).\end{equation} The
same arguments as above yield the above equality (\ref{equ2.11+-})
for $\mathcal{T}_s(r,u)$ of subharmonic functions $u$ with $\lambda$
replaced by the order $\rho$, especially for $\mathcal{T}(r,f)$ of
holomorphic curves $f$ including the algebroid functions and
$T(r,f)$ of meromorphic functions $f$. The asymptotic identity (\ref{equ2.11+-}) was
obtained by Chiang and Feng~\cite{ChiangFeng}. Using the lemma of
logarithmic differences as a bridge, that is,
$$m\left(r,\frac{w_c}{w}\right)=O(r^{\rho-1+\varepsilon}),$$ they
imply that
$$T(r,w_c)=T(r,w)+O(r^{\rho-1+\varepsilon})+O(\log r).$$
Our methods in this paper are different from theirs. It is obvious
that the above equalities make no sense for $r$ with
$T(r,w)=o(r^{\rho-1+\varepsilon})$. For example, if the lower order
of $w$ is less than $\rho-1$, then there exists an unbounded set $F$
of $r$ such that $T(r,w)=o(r^{\rho-1+\varepsilon}),\ r\in F$.

In view of Lemma 1.1.2 in \cite{Zheng}, if
\begin{equation}\label{equ2.7--}\liminf\limits_{r\to\infty}\frac{N_s(dr,u)}{N_s(r,u)}\geq
d\end{equation} for some $d>1$, then
$$\int_{1+|c|}^{r+|c|}\frac{N_s(t,u)}{(t-|c|)^2}{\rm
d}t\leq K\frac{N_s(r+|c|,u)}{r}+O(1).$$ Thus (\ref{equ2.3+}) yields
\begin{equation}\label{equ2.12+-}N_s(r,u_c)\leq
\left(1+\frac{|c|(K+2)}{r}\right)N_s(r+|c|,u).\end{equation}

We also deal with the finite lower order. Assume that $N_s(r,u)$ has
the lower order $\mu<+\infty$ and the order $\lambda>1$. Then in
view of Theorem 1.1.3 in \cite{Zheng}, for any fixed set $E$ of the
finite logarithmic measure and $\max\{1,\mu\}<\rho\leq\lambda$,
there exists a sequence of the Poly\'a peak $\{r_n\}$ of order
$\rho$ outside $E$ such that
$$N_s(r_n+|c|,u)\leq (1+\varepsilon_n)\left(1+\frac{|c|}{r_n}\right)^{\rho}N_s(r_n,u),$$
$$\frac{N_s(t,u)}{t^{\rho-\varepsilon_n}}\leq
K\frac{N_s(r_n,u)}{r_n^{\rho-\varepsilon_n}}$$ for $1\leq t\leq
r_n+|c|$ and $\varepsilon_n\to 0 \ (n\to\infty).$ Then we have for
$r=r_n$ \begin{eqnarray*}\int_1^r\frac{N_s(t+|c|,u)}{t^2}{\rm
d}t&\leq&K\frac{N_s(r_n,u)}{r_n^{\rho-\varepsilon_n}}\int_1^r\frac{(t+|c|)^{\rho-\varepsilon_n}}{t^2}{\rm
d}t\\
&\leq&
K\frac{N_s(r_n,u)}{r_n^{\rho-\varepsilon_n}}(1+|c|)^\rho\int_1^rt^{\rho-2-\varepsilon_n}{\rm
d}t\\
&\leq&
K(1+|c|)^\rho\frac{N_s(r_n,u)}{r_n^{\rho-\varepsilon_n}}\frac{1}{|\rho-1-\varepsilon_n|}
\left[1+r_n^{\rho-1-\varepsilon_n}\right]\\
&\leq&
\frac{2K(1+|c|)^\rho}{|\rho-1-\varepsilon_n|}\frac{N_s(r_n,u)}{r_n},\end{eqnarray*}
for all sufficiently large $n$ by noting that $r_n^{\rho-1-\varepsilon_n}\geq 1$.

For the case of the infinite order, in view of Lemma 1.1.3 in
\cite{Zheng}, there exist these~$\{r_n\}$ such that the above
inequality holds.

Throughout the above discussion, we assume that $c$ is a constant
independent of~$r$. Now we consider the case when $c$ is allowed to
depend on $r$, as Chiang and Luo did in \cite{ChiangLuo}. In view of
Lemma \ref{lem2.1}, we can establish the following

\begin{thm}\label{thm2.1-}\ Let $u$ be a $\delta$-subharmonic function on
$\mathbb{C}$ with (\ref{equ2.2++}). Assume that (\ref{equ2.7--})
holds. Then for all $c$ with $0<|c|<\phi^\delta(r)$ and
$0<\delta<1$, we have
\begin{equation}\label{equ--}N_s(r,u_c)\sim N_s(r,u),\ \text{as}\ r\in
E_\delta\to\infty,\end{equation} where $E_\delta$ is such that
$\underline{\rm dens}E_\delta=0$.
\end{thm}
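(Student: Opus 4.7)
The plan is to combine the sharpened shift estimate (\ref{equ2.12+-}), which was established under (\ref{equ2.7--}) in the discussion just before the theorem, with Lemma \ref{lem2.1} applied to $T(r)=N_s(r,u)$. Recall that (\ref{equ2.12+-}) reads
$$N_s(r,u_c)\leq\left(1+\frac{|c|(K+2)}{r}\right)N_s(r+|c|,u),$$
for a constant $K$ that depends only on $d$. Since $\phi(r)\leq r$ and $\delta<1$, the hypothesis $|c|<\phi^\delta(r)$ yields $|c|/r\leq r^{\delta-1}\to 0$, so the parenthetical prefactor tends to $1$ uniformly in $c$.

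Next I would replace $N_s(r+|c|,u)$ by $N_s(r,u)$ by invoking Lemma \ref{lem2.1}. Hypothesis (\ref{equ2.2++}) is exactly (\ref{equ2.2+}) for $T(r)=N_s(r,u)$, and monotonicity of $N_s$ together with $|c|<\phi^\delta(r)$ gives
$$N_s(r+|c|,u)\leq N_s(r+\phi^\delta(r),u)\leq\bigl(1+4\phi(r)^{\delta-1/2}\bigr)N_s(r,u)$$
for every $r\notin E_\delta$, where $\underline{\rm dens}\,E_\delta=0$. The decisive point is that $E_\delta$ is produced by applying Lemma \ref{lem2.1} to the single function $u$ and therefore depends only on $u$ and $\delta$, not on $c$; so the same exceptional set serves uniformly for every admissible choice of $c=c(r)$ with $|c|<\phi^\delta(r)$. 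Chaining the two displays yields $N_s(r,u_c)\leq(1+o(1))N_s(r,u)$ off $E_\delta$. For the reverse bound I would rerun the same argument with the pair $(u_c,-c)$ in place of $(u,c)$, using $u=(u_c)_{-c}$ together with the upper bound already proved to transfer condition (\ref{equ2.7--}) from $u$ to $u_c$ (since $N_s(r,u_c)\sim N_s(r,u)$ forces the same $\liminf$ behaviour for the ratio $N_s(dr,u_c)/N_s(r,u_c)$).

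The main obstacle I anticipate is the gap between the admissible range $0<\delta<1$ in the theorem and the range $0<\delta<\tfrac12$ for which Lemma \ref{lem2.1} produces a factor $\phi(r)^{\delta-1/2}$ that actually tends to zero; for $\delta\geq\tfrac12$ one must either iterate Lemma \ref{lem2.1} along a chain of sub-shifts of size $\phi^{1/2-\eta}$ (exploiting that the bad set has zero lower density, so long good runs exist and can be strung together to cover a step of length $\phi^\delta$), or go back to the explicit integral formula (\ref{equ2.3+}) and rework the last integral under the stronger hypothesis (\ref{equ2.7--}) to squeeze out a better error term. Modulo this calibration of $\delta$, the proof is a routine assembly of pieces already prepared in the section.
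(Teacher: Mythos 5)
Your forward inequality is exactly the paper's argument: under (\ref{equ2.7--}) one invokes (\ref{equ2.12+-}), and then Lemma \ref{lem2.1} applied to $T=N_s$ (legitimate by (\ref{equ2.2++})) converts $N_s(r+|c|,u)$ into $(1+o(1))N_s(r,u)$ off a set of zero lower density depending only on $u$ and $\delta$, hence uniformly in $c$. Where you diverge is the reverse inequality. The paper does not pass to the pair $(u_c,-c)$; it uses the pointwise counting-function inequality $n_s(t-|c|,u)\le n_s(t,u_c)$ and reruns the integral computation, so that everything stays expressed in terms of $u$ alone. Your transfer of (\ref{equ2.7--}) to $u_c$ is problematic on two counts: the equivalence $N_s(r,u_c)\sim N_s(r,u)$ you want to feed back in holds only off the exceptional set, whereas the $\liminf$ in (\ref{equ2.7--}) ranges over all $r$; and, more seriously, the theorem is designed to allow $c=c(r)$ to grow with $r$, in which case ``$u_c$'' is not a single fixed $\delta$-subharmonic function and (\ref{equ2.7--}) for it is not even meaningful. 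The paper's route avoids both issues and is the one you should take.

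On the calibration of $\delta$: you are right that Lemma \ref{lem2.1}, as stated and as proved, requires $0<\delta<1/2$ (its proof needs $2\phi^{\delta-1/2}(r)<1/2$), while the theorem asserts $0<\delta<1$. The paper's own proof simply writes the factor $1+4\phi(r)^{-(1-\delta)/2}$ without comment; that factor does tend to $1$ for all $\delta<1$, but it does not literally follow from the lemma as stated. So this is a gap in the source as much as in your proposal, and your suggested repairs (iterating the lemma along sub-shifts, or reworking the integral in (\ref{equ2.3+}) under (\ref{equ2.7--})) are reasonable ways to close it.
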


\begin{proof}\ Under (\ref{equ2.7--}), we have (\ref{equ2.12+-}).
Applying Lemma \ref{lem2.1} for $T=N_s$, we get
$$N_s(r,u_c)\leq
\left(1+\frac{|c|(K+2)}{r}\right)
\left(1+4\left(\frac{1}{\phi(r)}\right)^{\frac{1-\delta}{2}}\right)N_s(r,u)\
(r\not\in E_\delta\to\infty).$$ By noting that $n_s(t-|c|,u)\leq
n(t, u_c)$£¬ we can establish the opposite inequality. Therefore, we
get (\ref{equ--}).
\end{proof}

We guess that (\ref{equ2.7--}) would not be necessary. But we have
to look for another way to get (\ref{equ--}) without
(\ref{equ2.7--}).

\section{Difference of a Logarithm}

Let $u$ be a $\delta$-subharmonic function on $\mathbb{C}$. Let $D$
be a domain on $\mathbb{C}$ surrounded by finitely many piecewise
analytic curves. For any point $z\in D$ and a fixed complex
number $c$ such that $z+c\in D$, we set $u_c(z)=u(z+c)$. Then, in view
of the Poisson-Jensen formula (\ref{equ2.1}), we have
\begin{eqnarray*}\ u_c(z)-u(z)&=&\frac{1}{2\pi}\int_{\partial
D}u(\zeta)\left(\frac{\partial G_D(z+c,\zeta)}{\partial {\bf
n}}-\frac{\partial G_D(z,\zeta)}{\partial {\bf n}}\right){\rm d}s\\
&\ &+N_s(D,z+c,u)-N_s(D,z,u)\\
&\ &+N_s(D,z,-u)-N_s(D,z+c,-u),
\end{eqnarray*}
so that, by noting that $G_D(z,w)>0$ for all $w \in D$ and using the fact that $(\Delta
u)^-$ is a Borel measure, we have
\begin{eqnarray}\label{equ2.2}
(u_c(z)-u(z))^+&\leq &\frac{1}{2\pi}\int_{\partial
D}\left[u(\zeta)\left(\frac{\partial G_D(z+c,\zeta)}{\partial {\bf
n}}-\frac{\partial G_D(z,\zeta)}{\partial {\bf n}}\right)\right]^+{\rm d}s\nonumber\\
&\ &+(N_s(D,z+c,u)-N_s(D,z,u))^+\nonumber\\
&\ &+(N_s(D,z,-u)-N_s(D,z+c,-u))^+\nonumber\\
&\leq &\frac{1}{2\pi}\int_{\partial D}u^+(\zeta)\left(\frac{\partial
G_D(z+c,\zeta)}{\partial {\bf n}}-\frac{\partial
G_D(z,\zeta)}{\partial {\bf n}}\right)^+{\rm
d}s\nonumber\\
&\ &+\frac{1}{2\pi}\int_{\partial
D}(-u)^+(\zeta)\left(\frac{\partial G_D(z,\zeta)}{\partial {\bf
n}}-\frac{\partial G_D(z+c,\zeta)}{\partial {\bf n}}\right)^+{\rm d}s\nonumber\\
\label{equ2.3} &\ &+N_s(D,z+c,u)+N_s(D,z,-u).
\end{eqnarray}

Let $B$ be a domain in $D$ surrounded by finitely many piecewise
analytic curves. We want to estimate $m_s(B,a,u_c-u)$ for a fixed
point $a\in B$. For a fixed point $w\in D$, in terms of the
Poisson-Jensen formula (\ref{equ2.1}), we have
\begin{eqnarray*}\ m_s(B,a,G_D(z,w))&=&G_D(a,w)+\frac{1}{2\pi}\int_B
G_B(a,\zeta)\Delta_\zeta G_D(\zeta,w)\\
&=&G_D(a,w)-\frac{1}{2\pi}\int_B
G_B(a,\zeta)\Delta_\zeta \log|\zeta-w|\\
&=&G_D(a,w)-\int_B
G_B(a,\zeta) \delta_w\\
&=&G_D(a,w)-G_B(a,w)\chi_B(w),
\end{eqnarray*}
and \begin{eqnarray*}\
m_s(B,a,G_D(z+c,w))&=&G_D(a+c,w)-G_{B}(a,w-c)\chi_{B}(w-c)\\
&=&G_D(a+c,w)-G_{B+c}(a+c,w)\chi_{B+c}(w),\end{eqnarray*}
where
$\Delta_\zeta=\Delta$ is the Laplacian with respect to $\zeta$,
which is emphasized by writing $\zeta$ as a subscript, $\delta_w$ is
the Dirac delta function at $w$ and $\chi_B$ is the characteristic
function of $B$. For simplicity, define $N_{D,u}(z)=N_s(D,z,u)$ and
$N_{D,u,c}(z)=N_s(D,z+c,u)$.
Then
\begin{eqnarray}\label{equ2.4}\ m_s(B,a,N_{D,u})&=&\frac{1}{2\pi}\int_D
m_s(B,a,G_{D}(z,w))(\Delta u)^-(w)\nonumber\\
&=&\frac{1}{2\pi}\int_D
(G_{D}(a,w)-G_{B}(a,w)\chi_B(w))(\Delta u)^-(w)\nonumber\\
&=&N_s(D,a,u)-N_s(B,a,u)
\end{eqnarray}
and
\begin{eqnarray}\label{equ2.4+}\ m_s(B,a,N_{D,u,c})&=&\frac{1}{2\pi}\int_D
m_s(B,a,G_{D}(z+c,w))(\Delta u)^-(w)\nonumber\\
&=&\frac{1}{2\pi}\int_D
(G_{D}(a+c,w)-G_{B+c}(a+c,w)\chi_{B+c}(w))(\Delta u)^-(w)\nonumber\\
&=&N_s(D,a+c,u)-N_s(B+c,a+c,u).
\end{eqnarray}

Combining the above inequalities (\ref{equ2.3}), (\ref{equ2.4}) and
(\ref{equ2.4+}) yields the following

\begin{thm}\label{thm2.1+}\ Let $u$ be a $\delta$-subharmonic function on $D$ and
$B$ a domain such that $B\subset D$ and $B+c\subset D$. Then for any
$a\in B$, we have
\begin{eqnarray*}m_s(B,a,u_c-u)&\leq&\frac{1}{2\pi}\int_{\partial D}|u(\zeta)|m_s\left(B,a,\left|\frac{\partial G_D(z+c,\zeta)}{\partial {\bf
n}}-\frac{\partial G_D(z,\zeta)}{\partial {\bf
n}}\right|\right){\rm d}s\\
&\ & +N_s(D,a+c, u)-N_s(B+c,a+c,u)\\
&\ &+N_s(D,a, -u)-N_s(B,a,-u).
\end{eqnarray*}
\end{thm}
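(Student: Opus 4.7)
The plan is to apply the functional $m_s(B,a,\cdot)$ to both sides of the pointwise inequality (\ref{equ2.3}), viewed as an inequality between nonnegative functions of $z\in B$. Since the measure $\frac{1}{2\pi}\frac{\partial G_B(a,z)}{\partial{\bf n}}ds_z$ on $\partial B$ is nonnegative and both sides of (\ref{equ2.3}) are nonnegative, the inequality is preserved under integration against this measure; the left-hand side becomes exactly $m_s(B,a,u_c-u)$, while the right-hand side decomposes into four pieces.

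For the two $N_s$ pieces on the right of (\ref{equ2.3}), the identities (\ref{equ2.4}) and (\ref{equ2.4+}), applied respectively to the nonnegative functions $N_{D,-u}(z)=N_s(D,z,-u)$ and $N_{D,u,c}(z)=N_s(D,z+c,u)$, yield directly the two difference terms $N_s(D,a+c,u)-N_s(B+c,a+c,u)$ and $N_s(D,a,-u)-N_s(B,a,-u)$ appearing in the conclusion. No further work is required here.

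For the two boundary-integral pieces, I would interchange the order of integration (Fubini is immediate since the full integrand on $\partial B\times\partial D$ is nonnegative and the boundaries are piecewise analytic). Writing $\Phi(z,\zeta)=\partial G_D(z+c,\zeta)/\partial{\bf n}$ and $\Psi(z,\zeta)=\partial G_D(z,\zeta)/\partial{\bf n}$, the two inner integrals in $z$ become $m_s(B,a,\Phi(\cdot,\zeta)-\Psi(\cdot,\zeta))$ and $m_s(B,a,\Psi(\cdot,\zeta)-\Phi(\cdot,\zeta))$, the positive parts being absorbed into the definition of $m_s$. Bounding $u^+(\zeta)$ and $(-u)^+(\zeta)$ separately by $|u(\zeta)|$ and then using the elementary identity $(\Phi-\Psi)^++(\Psi-\Phi)^+=|\Phi-\Psi|$, the two pieces fuse into the single term $\frac{1}{2\pi}\int_{\partial D}|u(\zeta)|\,m_s(B,a,|\Phi-\Psi|)ds$, which matches the first line of the claimed bound.

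The whole argument is essentially formal once (\ref{equ2.3}), (\ref{equ2.4}), and (\ref{equ2.4+}) are in hand. The one step deserving a little care is the sign bookkeeping in the boundary-integral calculation: one must use $u^+,(-u)^+\leq|u|$ together with $(\Phi-\Psi)^++(\Psi-\Phi)^+=|\Phi-\Psi|$ to fuse the two halves of the Poisson term on $\partial D$ into a single $|u(\zeta)|\cdot|\Phi-\Psi|$ expression, rather than stopping at the weaker estimate $|u(\zeta)|\cdot(\Phi-\Psi)^+$ obtained from either piece in isolation. All other manipulations — monotonicity and positive linearity of $m_s(B,a,\cdot)$ on nonnegative functions, and Fubini with nonnegative integrand — are routine.
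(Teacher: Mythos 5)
Your proposal is correct and follows exactly the route the paper intends: the paper's ``proof'' is the single sentence that combining (\ref{equ2.3}), (\ref{equ2.4}) and (\ref{equ2.4+}) yields the theorem, and you have filled in precisely those steps — applying $m_s(B,a,\cdot)$ to the pointwise bound (\ref{equ2.3}), invoking (\ref{equ2.4})--(\ref{equ2.4+}) for the $N_s$ terms, and using Fubini together with $u^+,(-u)^+\leq|u|$ and the identity $b^++(-b)^+=|b|$ (which is exactly (\ref{equ2.5})) to fuse the two boundary integrals. Nothing is missing.
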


Assume that $D$ is simply connected. Let $\phi_a:\ \mathbb{D}\to D$
be the Riemann mapping with $\phi_a(0)=a,\ \phi'_a(0)>0$ and let
$\psi_a$ be the inverse of $\phi_a$. Then
$$\frac{\partial G_D(z,\zeta)}{\partial {\bf
n}}{\rm d}s=-i\frac{\psi'_z(\zeta)}{\psi_z(\zeta)}{\rm d}\zeta,$$ so
that
$$\left(\frac{\partial
G_D(z+c,\zeta)}{\partial {\bf n}}-\frac{\partial
G_D(z,\zeta)}{\partial {\bf n}}\right)^+{\rm
d}s=\left(\frac{\psi'_{z+c}(\zeta)\psi_z(\zeta)}{\psi_{z+c}(\zeta)\psi'_z(\zeta)}-1\right)^+
\frac{\partial G_D(z,\zeta)}{\partial {\bf n}}{\rm d}s,$$ where
$\zeta\in
\partial D.$
Noting that $|b|=b^++(-b)^+$ for a complex number $b$, we have
\begin{equation}\label{equ2.5}m_s\left(B,a,\left|\frac{\partial G_D(z+c,\zeta)}{\partial {\bf
n}}-\frac{\partial G_D(z,\zeta)}{\partial {\bf
n}}\right|\right)\end{equation}
$$=m_s\left(B,a,\frac{\partial G_D(z+c,\zeta)}{\partial
{\bf n}}-\frac{\partial G_D(z,\zeta)}{\partial {\bf
n}}\right)+m_s\left(B,a,\frac{\partial G_D(z,\zeta)}{\partial {\bf
n}}-\frac{\partial G_D(z+c,\zeta)}{\partial {\bf n}}\right).$$
From the definition of $N_s$, we have
\begin{eqnarray*}\ N_s(D,c,u)&=&\frac{1}{2\pi}\int_DG_D(c,w)(\Delta
u)^-(w)\nonumber\\
&=&\frac{1}{2\pi}\int_DG_{D+c}(0,w-c)(\Delta
u_c)^-(w-c)\nonumber\\
&=&\frac{1}{2\pi}\int_{D+c}G_{D+c}(0,w)(\Delta
u_c)^-(w)\nonumber\\
&=&N_s(D+c,0,u_c).\end{eqnarray*} Therefore, in view of the
subordinate principle of the Green function, for simply connected
domains $\hat{D}$ and $\tilde{D}$ such that $0\in\tilde{D}\subset
D+c\subset\hat{D}$, we have \begin{equation}\label{equ3.6+}
N_s(\tilde{D},0,u_c)\leq N_s(D,c,u)\leq
N_s(\hat{D},0,u_c).\end{equation}

These formulas apply to some special domains such as a disk or a
sector. In this paper, we only establish Lemma of the logarithmic
differences on a disk.

We discuss the case when $D=B(0,R)$ and $B=B(0,r)$ with $0<r<R$,
where $B(0,r)$ denotes the disk centered at $0$ with radius $r$.
Then for $\zeta=Re^{i\theta}$ and $z=re^{i\varphi}$, we have
\begin{eqnarray*}&\
&\left(\frac{\partial G_D(z+c,\zeta)}{\partial
{\bf n}}-\frac{\partial G_D(z,\zeta)}{\partial {\bf n}}\right)^+{\rm d}s\\
&=&\left(\frac{R^2-|z+c|^2}{|\zeta-z-c|^2}-\frac{R^2-|z|^2}{|\zeta-z|^2}\right)^+{\rm d}\theta\\
&=&\frac{R^2-|z|^2}{|\zeta-z|^2}\left(\frac{R^2-|z+c|^2}{|\zeta-z-c|^2}\frac{|\zeta-z|^2}{R^2-|z|^2}-1\right)^+{\rm d}\theta\\
 &\leq
&\frac{R^2-|z|^2}{|\zeta-z|^2}\left(\left(1+\frac{|z|^2-|z+c|^2}{R^2-|z|^2}\right)\frac{|\zeta-z|^2}{|\zeta-z-c|^2}-1\right)^+{\rm
d}\theta
\end{eqnarray*}\begin{eqnarray*}
&\leq&\frac{R^2-|z|^2}{|\zeta-z|^2}\left(\left(1+\frac{3|c||z|}{R^2-|z|^2}\right)\left(1+\frac{|c|}{|\zeta-z-c|}\right)^2-1\right){\rm d}\theta\\
&\leq&\left(\left(1+\frac{2|c|}{R-|z|}\right)\left(1+\frac{2}{|\zeta-z|}\right)^2-1\right)\frac{R^2-|z|^2}{|\zeta-z|^2}{\rm
d}\theta,
\end{eqnarray*}
where we assume that $|\zeta-z|\geq \max\{1,2|c|\}$ and $|z|>|c|$.
For simplicity, set $P=\frac{2|c|}{R-|z|}$ and
$Q=\frac{1}{|\zeta-z|}$. Then the quantity in final bracket of the
above inequality equals to
\begin{eqnarray*}
(1+P)(1+2Q)^2-1&=&P+4Q+4Q^2+4PQ+4PQ^2 \\
&<&9P+8Q\leq\frac{18|c|+8}{R-|z|}.
\end{eqnarray*}
Therefore, combining the above inequalities yields
$$\left(\frac{\partial G_D(z+c,\zeta)}{\partial
{\bf n}}-\frac{\partial G_D(z,\zeta)}{\partial {\bf n}}\right)^+{\rm
d}s\leq\frac{18|c|+8}{R-|z|}\frac{R^2-|z|^2}{|\zeta-z|^2}{\rm
d}\theta.$$ The same argument implies
$$\left(\frac{\partial G_D(z,\zeta)}{\partial
{\bf n}}-\frac{\partial G_D(z+c,\zeta)}{\partial {\bf n}}\right)^+
{\rm d}s\leq \frac{18|c|+8}{R-|z|}\frac{R^2-|z|^2}{|\zeta-z|^2}{\rm
d}\theta.$$
Thus it follows from (\ref{equ2.5}) that
\begin{equation}\label{equ2.6}m_s\left(B,0,\left|\frac{\partial G_D(z+c,\zeta)}{\partial {\bf
n}}-\frac{\partial G_D(z,\zeta)}{\partial {\bf n}}\right|\right){\rm
d}s\leq\frac{18(2|c|+1)}{R-|z|},\end{equation} by noting that
$$\frac{1}{2\pi}\int_0^{2\pi}\frac{R^2-|z|^2}{|\zeta-z|^2}{\rm
d}\theta=\frac{1}{2\pi}\int_{|\zeta|=R}\frac{\partial
G_D(\zeta,z)}{\partial {\bf n}}{\rm d}s=1.$$ And from
(\ref{equ3.6+}) it follows that
\begin{equation}\label{equ3.6+-}N_s(D,c,u)\leq N_s(R+|c|,u_c)\ \text{and}\
N_s(B+c,c,u)\geq N_s(r-2|c|,u_c).\end{equation}

Now we can establish the lemma on the logarithmic differences for
$\delta$-subharmonic functions.

\begin{thm}\label{thm3.2}\ Let $u$ be a $\delta$-subharmonic function on
$\mathbb{C}$. Then for any complex number $c$ and a real number
$\delta$ with $0<\delta<1/2$, we have
$$m_s(r,u_c-u)<436e(1+|c|)\left(\frac{1}{\phi_\varepsilon(r)}\right)^{\delta}T_s(r,u),\ r\not\in E_\varepsilon,$$
where $E_\varepsilon$ is a set in $r$ with finite logarithmic
measure, for $1<\phi_\varepsilon(r)<r$ and $r>1+|c|$. If
(\ref{equ2.2+}) holds for $T=T_s$, then the above inequality holds
where $\phi_\varepsilon(r)$ is assumed to be $\phi(r)$ and
$E_\varepsilon$ is the $E_\delta$ with $\underline{\rm
dens}E_\delta=0$.
\end{thm}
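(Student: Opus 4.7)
The plan is to apply Theorem~\ref{thm2.1+} to the concentric disks $D=B(0,R)$ and $B=B(0,r)$ with center $a=0$, choosing the outer radius as $R=r+\phi_\varepsilon(r)^\delta$ so that the increment $h:=R-r=\phi_\varepsilon(r)^\delta$ eventually produces the target factor $\phi_\varepsilon(r)^{-\delta}$. That theorem bounds $m_s(r,u_c-u)$ by the sum of a boundary oscillation integral and the two counting-function differences $N_s(D,c,u)-N_s(B+c,c,u)$ and $N_s(D,0,-u)-N_s(B,0,-u)$, and each of these three pieces must be controlled in terms of $T_s(r,u)$.

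For the boundary integral I would substitute the pointwise kernel bound derived in the paragraphs leading to Theorem~\ref{thm3.2}, namely
$$\left|\frac{\partial G_D(z+c,\zeta)}{\partial\mathbf{n}}-\frac{\partial G_D(z,\zeta)}{\partial\mathbf{n}}\right|{\rm d}s\leq\frac{C(1+|c|)}{R-r}\cdot\frac{R^2-r^2}{|\zeta-z|^2}{\rm d}\theta,$$
valid for $|z|=r$ and $|\zeta|=R$. Swapping the order of integration by Fubini, the inner $\varphi$-mean of the Poisson kernel $\frac{R^2-r^2}{|Re^{i\theta}-re^{i\varphi}|^2}$ equals $1$ by a direct computation, so the boundary integral collapses to at most $\frac{2C(1+|c|)}{R-r}T_s(R,u)+O(1)$. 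The two counting-function differences are treated via the subordination estimates (\ref{equ3.6+-}): each is dominated by a radial increment of $N_s(\cdot,u_c)$ or $N_s(\cdot,-u)$ over an interval of length at most $h+2|c|$, which in turn is absorbed into an increment of $T_s(\cdot,u)$.

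The decisive lever is then Lemma~\ref{lem2.2} applied to $T=T_s(\cdot,u)$ with step $h=\phi_\varepsilon(r)^\delta$: outside an exceptional set of finite logarithmic measure,
$$T_s(R,u)\leq T_s(r,u)+4e\phi_\varepsilon(r)^{\delta-1}T_s(r,u).$$
Since $\delta<1/2<1-\delta$, the $\phi_\varepsilon(r)^{\delta-1}$ error from the counting-function increments is subdominant with respect to the $\phi_\varepsilon(r)^{-\delta}$ rate of the boundary contribution, and after tracking the multiplicative constants one arrives at the claimed inequality with prefactor $436e(1+|c|)\phi_\varepsilon(r)^{-\delta}$. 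For the second assertion (under (\ref{equ2.2+}) for $T_s$), the argument is identical with Lemma~\ref{lem2.1} substituted for Lemma~\ref{lem2.2} and $\phi(r)$ in place of $\phi_\varepsilon(r)$, the exceptional set then being of zero lower (or upper) density. The main obstacle is to ensure that the slightly enlarged interval $[r-|c|,R+|c|]$ forced by the subordination step does not destroy the bound: the growth-control lemma must be applied a second time over this enlarged interval, and the two exceptional sets combined without inflating the total logarithmic measure (respectively lower density). Chasing the numerical constant down to exactly $436e$ is pure bookkeeping.
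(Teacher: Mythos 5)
Your overall skeleton --- Theorem~\ref{thm2.1+} on $D=B(0,R)$, $B=B(0,r)$ with $a=0$, the kernel bound $\frac{18(2|c|+1)}{R-|z|}$ for the boundary term, the subordination estimates (\ref{equ3.6+-}), and a final application of Lemma~\ref{lem2.2} (resp.\ Lemma~\ref{lem2.1}) --- matches the paper. But there is a genuine gap in your treatment of the two counting-function differences. You claim that an increment such as $N_s(R+|c|,u_c)-N_s(r-2|c|,u_c)$ or $N_s(R,-u)-N_s(r,-u)$ ``is absorbed into an increment of $T_s(\cdot,u)$'', to which Lemma~\ref{lem2.2} is then applied. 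This absorption is false in general: from $T_s=m_s+N_s$ one gets
$$N_s(R,-u)-N_s(r,-u)=\bigl(T_s(R,-u)-T_s(r,-u)\bigr)+\bigl(m_s(r,-u)-m_s(R,-u)\bigr),$$
and since the proximity function $m_s$ of a $\delta$-subharmonic function need not be monotone, the second bracket can be as large as $m_s(r,-u)\leq T_s(r,u)+O(1)$. So bounding the $T_s$-increment by $4e\,\phi_\varepsilon(r)^{\delta-1}T_s(r,u)$ tells you nothing about the $N_s$-increment, and your ``subdominant error'' conclusion does not follow.

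The paper's proof supplies exactly the missing idea: an \emph{intermediate radius} $v$ with $r<v<R=r+\hat\phi^{\delta}(r)$. One runs the Poisson--Jensen estimate on the disk of radius $v$ (not $R$), and then uses the logarithmic convexity of $N_s$ itself, via
$$N_s(v,-u)-N_s(r,-u)\leq n_s(v,-u)\log\frac{v}{r}\leq\frac{\log(v/r)}{\log(R/v)}\bigl(N_s(R,-u)-N_s(v,-u)\bigr)\leq\frac{R}{r}\,\frac{v-r}{R-v}\,N_s(R,-u),$$
choosing $v$ so that simultaneously $\frac{1}{v-r}=O(\hat\phi(r)^{-\delta/2})$ and $\frac{R}{r}\frac{v-r}{R-v}=O(\hat\phi(r)^{-\delta/2})$ (this is the role of the auxiliary function $V(r)+1=\hat\phi^{\delta/2}(r)$). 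The only growth input then needed is the crude bound $T_s(R,u)\leq eT_s(r,u)$ from (\ref{equ3.7-}) (or Lemma~\ref{lem2.1}), applied once to the single function $T_s(\cdot,u)$ --- which also disposes of your worry about combining several exceptional sets: in the zero-lower-density case a union of exceptional sets attached to $N_s(\cdot,u_c)$, $N_s(\cdot,-u)$ and $T_s(\cdot,u)$ separately need not retain zero lower density, whereas the paper's single application avoids the issue. Note finally that the exponent bookkeeping differs from yours: the proof produces the rate $\hat\phi(r)^{-\delta/2}$ with $0<\delta<1$, which is why the theorem is stated with exponent $\delta<1/2$.
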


\begin{proof}\ In view of Theorem \ref{thm2.1+},
(\ref{equ2.6}) and (\ref{equ3.6+-}), from the definition of
$m_s(r,u)$ it follows that
\begin{eqnarray}\label{equ2.9}m_s(r,u_c-u)&\leq &\frac{18(2|c|+1)}{R-r}(m_s(R,u)+m_s(R,-u))\nonumber\\
&\
&+(N_s(R+|c|,u_c)-N_s(r-2|c|,u_c))\nonumber\\
&\ &+(N_s(R,-u)-N_s(r,-u)).\end{eqnarray} We want to find a real and
positive function $V$ in $(0,+\infty)$ such that $V(r)\to\infty\
(r\to \infty)$. To this end, solve the equation in $v$
$$\frac{R}{r-2|c|}\frac{v-r+3|c|}{R-v-|c|}=\frac{1}{V(r)},\
v=\frac{(R-r+2|c|)(r-2|c|)}{RV(r)+r-2|c|}-3|c|+r,$$
$$\frac{1}{v-r}=\frac{RV(r)+r-2|c|}{(R-r-|c|)(r-2|c|)-3|c|RV(r)}\leq \frac{R(V(r)+1)}{(R-r-|c|)(r-2|c|)-3|c|RV(r)}.$$

Taking $R=r+\hat{\phi}^{\delta}(r)$ and
$V(r)+1=\hat{\phi}^{\delta/2}(r)$ with $0<\delta<1$,
$6|c|\leq\hat{\phi}^{\delta/2}(r)$, $\hat{\phi}^{\delta}(r)\leq r$
and $\hat{\phi}(r)\to\infty \ (r\to\infty)$, we have $R\leq 2r$,
$v\geq 3|c|+r$ and $(R-r-|c|)(r-2|c|)\geq 6|c|RV(r)$ so that
$$\frac{1}{v-r}\leq\frac{2R(V(r)+1)}{(R-r-|c|)(r-2|c|)}\leq
6\left(\frac{1}{\hat{\phi}(r)}\right)^{\delta/2},\
\frac{1}{V(r)}\leq
2\left(\frac{1}{\hat{\phi}(r)}\right)^{\delta/2}.$$

Set $\mu=\frac{1}{2\pi}(\Delta u)^+$ and $n_s(t,-u)=\mu(B(0,t))$.
Then
\begin{eqnarray*}N_s(v,-u)-N_s(r,-u)&=&\int_r^v\frac{n_s(t,-u)}{t}{\rm
d}t\leq n_s(v,-u)\log\frac{v}{r}\\
&\leq
&\frac{\log\frac{v}{r}}{\log\frac{R}{v}}(N_s(R,-u)-N_s(v,-u))\\
&\leq &\frac{R}{r}\frac{v-r}{R-v}N_s(R,-u).\end{eqnarray*} The same
calculation yields
$$N_s(v+|c|,u_c)-N_s(r-2|c|,u_c)\leq \frac{R}{r-2|c|}\frac{v-r+3|c|}{R-v-|c|}N_s(R,u_c).$$
With $R$ replaced by $v$ in (\ref{equ2.9}), in view of
(\ref{equ2.6-}), it can be seen that
\begin{eqnarray}\label{equ2.10}m_s(r,u_c-u)&\leq &\frac{18(2|c|+1)}{v-r}(m_s(v,u)+m_s(v,-u))\nonumber\\
&\ &+(N_s(v+|c|,u_c)-N_s(r-2|c|,u_c))\nonumber\\
&\ &+(N_s(v,-u)-N_s(r,-u))\nonumber\\
&\leq & \frac{36(2|c|+1)}{v-r}T_s(v,u)+\frac{1}{V(r)}(N_s(R,u_c)+N_s(R,u))\nonumber\\
&\leq &
\frac{36(2|c|+1)}{v-r}T_s(v,u)+\frac{2(1+|c|)}{V(r)}N_s(R,u).
\end{eqnarray}
 Inequality
(\ref{equ2.10}) together with the above inequalities yield
$$m_s(r,u_c-u)\leq
(436|c|+220)\left(\frac{1}{\hat{\phi}(r)}\right)^{\delta/2}T_s(r+\hat{\phi}^{\delta}(r),u).$$

Take $\hat{\phi}(r)=\phi_{T,\varepsilon}(r)$ for $T(r)=T_s(r,u)$.
Applying (\ref{equ3.7-}) to $T_s$ yields $T_s(R,u)\leq
T(r+\phi_\varepsilon(r),u)\leq eT_s(r,u), \ r\not\in E_\varepsilon$
and then we obtain immediately the desired inequality.

We go to the final part of Theorem \ref{thm3.2}. If (\ref{equ2.2+})
holds, then taking $\hat{\phi}(r)=\phi(r)\to\infty \ (r\to\infty)$, in
view of Lemma \ref{lem2.1}, we have
$$T_s(r+\phi^\delta(r),u)\leq eT_s(r,u),\ r\not\in E_\delta,$$
with $\underline{\rm dens}E_\delta=0$. We get the desired
inequality.
\end{proof}

It is clear that Theorem \ref{thm1.3+} and Theorem \ref{thm1.3} are
direct consequences of Theorem~\ref{thm3.2}. We remark that $c$ is
allowed to vary with $0\leq |c|\leq o(\phi^\delta(r))$ with
$0<\delta<1/2$.

It is well-known that for a meromorphic function $f$ of finite
order, we have
$$m\left(r,\frac{f'}{f}\right)=O(\log r)$$
without any exceptional set of $r>1$. But the exceptional set is
necessary for the case of logarithmic differences. Below let us find
such an example of an entire function with finite order.

\begin{exa}
Taking a sequence of positive numbers $\{r_k\}$ with $r_{k+1}\geq
2r_k, r_1>6$ and a sequence of positive integers $\{n_k\}$, we
consider the infinite product
$$f(z)=f(z;r_k,n_k)=\prod_{k=1}^\infty
\left\{1-\left(\frac{z}{r_k}\right)^{n_k}\right\}.$$ We will prove
that for suitable $n_k$, $f$ is of finite order and
$$m\left(r,\frac{f_3}{f}\right)\geq
(1+o(1))T(r,f_3)\ \text{and}\ T(r,f)=o(T(r,f_3))$$ as $r\in
E\to\infty$, where $E$ has finite logarithmic measure and infinite
linear measure.
\end{exa}

Since $r_k\geq 2^k$, it follows that $f$ is an entire
function on the complex plane. Let us estimate
$N\left(r,\frac{1}{f}\right)$, $N\left(r,\frac{1}{f_3}\right)$ and
$T(r,f)$. We can write
$$N\left(r,\frac{1}{f}\right)=\sum\limits_{k=1}^\infty n_k\log^+\frac{r}{r_k}$$
and
$$N\left(r,\frac{1}{f_3}\right)=\sum\limits_{k=1}^\infty \sum\limits_{j=0}^{n_k-1}\log^+\frac{r}{|-3+\omega_k^jr_k|},$$
where $\omega_k=e^{\frac{2\pi}{n_k}i}$. Since
\begin{eqnarray*}|-3+re^{i\theta}|^2&=&r^2-6r\cos \theta+9\leq
r^2-3\sqrt{2}r+9<(r-1)^2,
\end{eqnarray*}
for $-\frac{\pi}{4}\leq\theta\leq\frac{\pi}{4}$ and $r>4$, we have
$$N\left(r,\frac{1}{f_3}\right)>\sum\limits_{k=1}^\infty\frac{1}{4}n_k\log^+\frac{r}{r_k-1}.$$
For $r_s-\frac{1}{2}\leq r\leq r_s$, we have
$$N\left(r,\frac{1}{f}\right)=\sum\limits_{k=1}^{s-1} n_k\log^+\frac{r}{r_k}$$
and
\begin{eqnarray*}N\left(r,\frac{1}{f_3}\right)&>&\frac{1}{4}\sum\limits_{k=1}^sn_k\log^+\frac{r}{r_k-1}\\
&=&\frac{1}{4}\sum\limits_{k=1}^{s-1}
n_k\log^+\frac{r}{r_k-1}+\frac{1}{4}
n_s\log^+\frac{r}{r_s-1}\\
&>&\frac{1}{4}\sum\limits_{k=1}^{s-1} n_k\log^+\frac{r}{r_k-1}+
\frac{n_s}{8r_s}.\end{eqnarray*} Taking $n_s$ such that
$$n_s>4r_s(\log r_s)^2\sum_{k=1}^{s-1}n_k,$$ it can be seen that
$$N\left(r,\frac{1}{f}\right)=o\left(N\left(r,\frac{1}{f_3}\right)\right),\ r_s-\frac{1}{2}\leq r\leq r_s$$
as $s\to\infty.$ Clearly, $E:=\bigcup\limits_{s=1}^\infty
[r_s-1/2,r_s]$ has positive, finite logarithmic measure and infinite
linear measure. Since $f$ is entire, we have
\begin{eqnarray*}T(r,f)&\leq &\log M(r,f)\leq
\sum_{k=1}^\infty\log\left\{1+\left(\frac{r}{r_k}\right)^{n_k}\right\} \\
&=&\left(\sum_{r_k>r}+\sum_{r_k\leq
r}\right)\log\left\{1+\left(\frac{r}{r_k}\right)^{n_k}\right\}
\\
&\leq &\sum_{r_k>r}\left(\frac{r}{r_k}\right)^{n_k}+\sum_{r_k\leq
r}\left\{\log\left(\frac{r}{r_k}\right)^{n_k}+\log\left[1+\left(\frac{r_k}{r}\right)^{n_k}\right]\right\}\\
 &\leq &\sum_{k=0}^\infty\left(\frac{1}{2}\right)^{kn_k}+
\sum_{r_k\leq r}n_k\log\frac{r}{r_k}+\sum_{r_k\leq
r}\left(\frac{r_k}{r}\right)^{n_k}\\
&<&2+N\left(r,\frac{1}{f}\right)+\sum_{r_k\leq
r}\left(\frac{1}{2}\right)^{(k-1)n_k}\\
&\leq& N\left(r,\frac{1}{f}\right)+4,
\end{eqnarray*}
and so
$$T(r, f)=o\left(N\left(r,\frac{1}{f_3}\right)\right)=o(T(r,f_3)),\ r_s-\frac{1}{2}\leq r\leq r_s.$$

By noting that $m(r,f_3)=T(r,f_3)$ and $m(r,f)=T(r,f)$, we have
$$m\left(r,\frac{f_3}{f}\right)\geq
m(r,f_3)-m(r,f)=T(r,f_3)-T(r,f)=(1+o(1))T(r,f_3)$$ as $r\in
E\to\infty$. It is easy to see that $f$ has finite order greater
than or equal to $1$ if $n_s$ is chosen to satisfy
$$\limsup_{s\to\infty}\frac{\log n_s}{\log r_s}<\infty.$$

\section{Clunie Type Theorems}\label{Clunie_sect}

Formulating a discrete analogue of the Painlev\'e property, that can be reliably applied
to distinguish integrable discrete equations from the non-integrable ones, has been an
important topic of study in the field of discrete integrable systems for over two decades.
In the complex analytic setting it is natural to consider discrete equations as difference
equations embedded into the complex plane, and to look at their meromorphic solutions \cite{Ablowitz}.
It turns out that requiring the existence of admissible meromorphic solutions of slow growth is a property
that can effectively single out Painlev\'e type equations for both discrete \cite{HalburdKorhonen,RodRistoTohge}
and delay differential equations \cite{HalburdKorhonen2}. We will consider first a general class of
equations with as weak growth condition for the solutions as possible.

A difference polynomial of a meromorphic function $w$ has the form
$$P(z,\vec{w})=\sum_{\lambda\in I}a_\lambda(z) w(z)^{\lambda_0}w(z+c_{1})^{\lambda_1}...
w(z+c_{n})^{\lambda_n},$$ where
$\vec{w}(z)=(w(z),w(z+c_1),...,w(z+c_n))$,
$I=\{(\lambda_0,\lambda_1,...,\lambda_n):\
\lambda_j\in\mathbb{N}\cup\{0\}\}$ is a finite multi-index set and
all of $c_{j} (1\leq j\leq n)$ are non-zero complex constants and
every $a_\lambda(z)$ is a given meromorphic function. The total
degree, denoted by ${\rm deg}_{\vec{w}}(P)$, of $P(z,\vec{w})$ in
$\vec{w}$ is defined as
$${\rm deg}_{\vec{w}}(P):=\max\left\{\sum_{j=0}^n\lambda_j:\
\lambda=(\lambda_0,\lambda_1,...,\lambda_n)\in I\right\}$$ and the
degree, denoted by ${\rm deg}_{c_j}(P)$, of $P(z,\vec{w})$ in
$w(z+c_j)\ (1\leq j\leq n)$ as
$$\hat{\lambda}_{c_j}={\rm deg}_{c_j}(P):=\max\left\{\lambda_j:\
\lambda=(\lambda_0,\lambda_1,...,\lambda_n)\in I\right\}$$ and
$$\hat{\lambda}_{0}={\rm deg}_{0}(P):=\max\left\{\lambda_0:\
\lambda=(\lambda_0,\lambda_1,...,\lambda_n)\in I\right\}.$$
 Thus
the weight of a difference polynomial $P(z,\vec{w})$ in $\vec{w}$ is
$$\hat{\kappa}(P)=\sum_{j=1}^n\hat{\lambda}_{c_j}$$
and we set
$$\kappa(P):=\max\left\{\sum_{j=1}^n\lambda_j:\
\lambda=(\lambda_0,\lambda_1,...,\lambda_n)\in I\right\}.$$
Obviously, $\hat{\kappa}(P)\geq \kappa(P).$

\begin{thm}\label{thm1.1} If the difference equation
\begin{equation}\label{equ1.7a} P(z,\vec{w})=R(z,w),
\end{equation}
where $P(z,\vec{w})$ is a difference polynomial in $\vec{w}$ with
$P(z,0,x_1,...,x_n)\not=0$ and $R(z,w)$ is a rational function in
$w$, has a non-rational meromorphic solution $w$ with
(\ref{equ1.2+}), then
$${\rm deg}_w(R)\leq \sum_{j=0}^n\hat{\lambda}_{c_j}=\hat{k}(P)+\hat{\lambda}_{0}.$$
\end{thm}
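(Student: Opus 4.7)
\textbf{Proof Plan for Theorem \ref{thm1.1}.} The plan is to apply the Valiron--Mohon'ko theorem on the right-hand side and compare it with a direct Nevanlinna estimate of the left-hand side, using the shift invariance \eqref{equ1.2+} to translate each shifted characteristic $T(r, w(z+c_j))$ back to $T(r,w)$ along the sequence on which \eqref{equ1.2+} holds. Throughout, $S(r,w)$ denotes a quantity which is $o(T(r,w))$ on that sequence (and absorbs the contributions of the coefficients $a_\lambda$ and of the coefficients of $R$, which we assume to be small with respect to $w$, as is standard in this setting).

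First, by the classical Valiron--Mohon'ko theorem applied to the rational function $R(z,w)$ in $w$,
\begin{equation*}
T(r, R(z,w(z))) = \deg_w(R)\cdot T(r,w) + S(r,w).
\end{equation*}
Second, I estimate $T(r, P(z,\vec{w}))$ from above. For the monomial $w(z)^{\lambda_0}w(z+c_1)^{\lambda_1}\cdots w(z+c_n)^{\lambda_n}$, the bound $|w(z)|^{\lambda_0}\prod_j|w(z+c_j)|^{\lambda_j}$ yields a proximity bound, and its poles come (with appropriate multiplicity) from the poles of $w$ and its shifts, giving a corresponding bound on the counting function. Summing over $\lambda \in I$ and absorbing the coefficient contributions in $S(r,w)$ leads to
\begin{equation*}
T(r, P(z,\vec{w})) \leq \hat{\lambda}_0\, T(r,w) + \sum_{j=1}^{n} \hat{\lambda}_{c_j}\, T(r, w(z+c_j)) + S(r,w),
\end{equation*}
the key point being that the degree in each variable separately is controlled by $\hat{\lambda}_0$ respectively $\hat{\lambda}_{c_j}$, not by the cardinality of $I$.

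Third, invoking \eqref{equ1.2+}, there is an unbounded sequence of $r$'s along which $T(r, w(z+c_j)) = (1+o(1))T(r,w)$ simultaneously for $j=1,\ldots,n$. On that sequence,
\begin{equation*}
T(r, P(z,\vec{w})) \leq \bigl(\hat{\lambda}_0 + \hat{k}(P)\bigr)(1+o(1))\, T(r,w) + S(r,w).
\end{equation*}
Since $P(z,\vec{w}) = R(z,w)$, these two characteristics coincide, and because $w$ is a non-rational meromorphic solution one has $T(r,w)/\log r \to \infty$, so dividing by $T(r,w)$ and letting $r \to \infty$ along the sequence makes the $S(r,w)/T(r,w)$ and $o(1)$ terms vanish, yielding $\deg_w(R) \leq \hat{\lambda}_0 + \hat{k}(P) = \sum_{j=0}^{n}\hat{\lambda}_{c_j}$.

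The main technical obstacle is the second step: establishing the upper estimate for $T(r, P(z,\vec{w}))$ with the correct coefficients $\hat{\lambda}_0$ and $\hat{\lambda}_{c_j}$, rather than a bound inflated by $|I|$ or by $\deg_{\vec{w}}(P)$. This requires viewing $P$ not as an arbitrary sum of monomials but as a polynomial whose degree in each individual variable is capped by $\hat{\lambda}_0$ or $\hat{\lambda}_{c_j}$, so that the bound $\log^+|P|\leq \hat{\lambda}_0 \log^+|w| + \sum_j \hat{\lambda}_{c_j}\log^+|w(z+c_j)| + O(1) + \text{coefficients}$ holds pointwise, and analogously for the pole-order count. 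The hypothesis $P(z,0,x_1,\ldots,x_n)\not\equiv 0$ is used to guarantee that $P$ is genuinely a non-trivial relation (preventing the factorisation of $w(z)$ out of the equation, which would trivialise the degree bound), while the assumption that $w$ is non-rational ensures $T(r,w)\to\infty$ faster than any error term under consideration.
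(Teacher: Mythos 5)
Your argument is correct, and it is worth noting that the paper itself states Theorem \ref{thm1.1} without giving a proof (it is only asserted in Section \ref{intr} and then invoked in the proof of Theorem \ref{thm4.3}), so your write-up supplies exactly the missing argument. Your key observation is the right one: bounding $T(r,P(z,\vec{w}))$ variable by variable, so that $m$ and $N$ of $P$ are each controlled by $\hat{\lambda}_0 T(r,w)+\sum_{j=1}^n\hat{\lambda}_{c_j}T(r,w_{c_j})$ plus coefficient terms, and then converting each $T(r,w_{c_j})$ into $(1+o(1))T(r,w)$ via \eqref{equ1.2+} before comparing with the Valiron--Mohon'ko identity for the right-hand side. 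This explains why the bound is $\sum_{j=0}^n\hat{\lambda}_{c_j}$ rather than the (possibly smaller) total degree $\deg_{\vec{w}}(P)$: the sharper total-degree estimate of the paper's Theorem \ref{thm4.1} requires control of $m\left(r,\frac{w_{c_j}}{w}\right)$, i.e.\ the lemma on logarithmic differences, which needs the stronger growth hypotheses of Theorems \ref{thm1.3+}--\ref{thm1.4}, whereas your route works under the bare shift-invariance assumption \eqref{equ1.2+}. Two small points to tighten. First, you should say explicitly that a single sequence $r_k\to\infty$ can be chosen along which $T(r,w_{c_j})\sim T(r,w)$ for all of the finitely many shifts $c_1,\dots,c_n$ simultaneously; this is harmless here because the exceptional sets produced by Theorem \ref{thm2.1} and Lemma \ref{lem2.1} are independent of $c$, but as \eqref{equ1.2+} is phrased it deserves a sentence. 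Second, your stated role for the hypothesis $P(z,0,x_1,\dots,x_n)\not=0$ is not accurate: the inequality you prove holds verbatim without it (if $w(z)^k$ divided $P$ the upper bound for $T(r,P)$ would only become less sharp, not invalid), so this hypothesis is really carried along for the later Clunie-type analysis rather than being used in the degree estimate; it would be cleaner to say so than to attribute to it a job it does not perform.
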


Obtaining more precise results requires a careful singularity analysis near the poles of the solution. In what follows, we investigate the number of poles of meromorphic
solutions of difference equations in terms of the Clunie type theorems. The following lemma may be known.

\begin{lem}\label{lem4.1}\ Let $f$ be a meromorphic function. Assume
that
\begin{eqnarray*}P(z,w)&=&a_n(z)w^n+a_{n-1}(z)w^{n-1}+...+a_1(z)w+a_0(z)\\
Q(z,w)&=&b_m(z)w^m+b_{m-1}(z)w^{m-1}+...+b_1(z)w+b_0(z)\not\equiv
0\end{eqnarray*} with $n={\rm deg}_w(P)$ and $m={\rm deg}_w(Q)$, and
meromorphic coefficients $a_j(z) (0\leq j\leq n)$ and $b_i(z) (0\leq
i\leq m)$, and that they do not have common factors. Set
$$R(z,w)=\frac{P(z,w)}{Q(z,w)}.$$
Then
\begin{equation}\label{equ4.1}
m(r,P(z,f))\leq nm(r,f)+\sum_{j=0}^nm(r,a_j) + n\log 2\end{equation}
and
\begin{eqnarray}\label{equ4.2}
(n-m)m(r,f)&\leq &
m(r,R(z,f))+(n-m)m\left(r,\frac{1}{a_n}\right)\nonumber\\
 &\
&+(n-m)\sum_{j=0}^{n-1}m\left(r,a_j\right)
+\sum_{i=0}^{m}m\left(r,b_i\right)\nonumber\\
&\ & +(2n-2m+1)\log 2+(n-m)\log n+\log (m+1).
\end{eqnarray}
\end{lem}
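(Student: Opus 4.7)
The two inequalities in Lemma~\ref{lem4.1} are standard Valiron--Mohon'ko type estimates; the strategy is pointwise control on circles $|z|=r$, followed by integration. I treat them separately.

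\textbf{Part 1: inequality (4.1).} This is essentially mechanical. For $0\le j\le n$ one has $|f|^{j}\le\max(1,|f|^{n})$, so
\[
|P(z,f(z))|\le\max(1,|f|^{n})\sum_{j=0}^{n}|a_j(z)|.
\]
Applying $\log^{+}$, together with the elementary estimates $\log^{+}(xy)\le\log^{+}x+\log^{+}y$ and the iterated pairwise bound $\log^{+}(x_0+\cdots+x_n)\le\sum_{j=0}^{n}\log^{+}x_j+n\log 2$, yields
\[
\log^{+}|P(z,f(z))|\le n\log^{+}|f(z)|+\sum_{j=0}^{n}\log^{+}|a_j(z)|+n\log 2.
\]
Integrating in $\theta$ over $[0,2\pi]$ gives (\ref{equ4.1}).

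\textbf{Part 2: inequality (4.2).} One may assume $n>m$, else the inequality is trivial. The plan is to split the circle $|z|=r$ according to whether $|f|$ is large or small with respect to the coefficients of $P$. Put
\[
K(z)=\max\Bigl\{1,\ 2n\,M(z)\Bigr\},\qquad M(z)=\max_{0\le j\le n-1}\frac{|a_j(z)|}{|a_n(z)|},
\]
and let $E=\{\theta:|f(re^{i\theta})|>K(re^{i\theta})\}$. On $E$ the definition of $K$ forces $\sum_{j<n}|a_j||f|^{j}\le\tfrac12|a_n||f|^{n}$, whence $|P(z,f)|\ge\tfrac12|a_n||f|^{n}$; combining with the crude bound $|Q(z,f)|\le(m+1)\max_i|b_i|\,|f|^{m}$ gives
\[
|f|^{n-m}\le\frac{2(m+1)\max_i|b_i(z)|}{|a_n(z)|}\,|R(z,f(z))|,
\]
and taking $\log^{+}$ factorwise produces a bound of the shape
\[
(n-m)\log^{+}|f|\le\log^{+}|R(z,f)|+\sum_{i=0}^{m}\log^{+}|b_i|+\log^{+}\tfrac{1}{|a_n|}+C_1.
\]
On the complement of $E$ we have $\log^{+}|f|\le\log^{+}K(z)$, and the estimate $\log^{+}\max_{j<n}|a_j/a_n|\le\sum_{j=0}^{n-1}\log^{+}|a_j|+\log^{+}(1/|a_n|)$ yields
\[
(n-m)\log^{+}|f|\le(n-m)\Bigl(\sum_{j=0}^{n-1}\log^{+}|a_j|+\log^{+}\tfrac{1}{|a_n|}\Bigr)+C_2.
\]
Both integrands are nonnegative, so after integrating over the respective arcs one may extend each integral to all of $[0,2\pi]$. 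Adding the two contributions and collecting the numerical constants from the $\log 2$, $\log n$, $\log(m+1)$ terms gives (\ref{equ4.2}).

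\textbf{Main obstacle.} The strategy is standard; the real work is bookkeeping. One must track (i) the $\log 2$ contributions coming from every pairwise splitting of $\log^{+}$, (ii) the multiplicative constants $2n$, $m+1$ appearing in the thresholds and in the upper bounds, and (iii) the way the $\log^{+}(1/|a_n|)$ terms from the ``large $|f|$'' and ``small $|f|$'' pieces combine. Care is needed so that the coefficient of $m(r,1/a_n)$ ends up being exactly $n-m$ as stated; this is why the threshold $K$ is chosen to depend on the $a_j$'s only, so that the $1/|a_n|$ factor naturally carries weight $n-m$ coming from the complementary set, while on $E$ the $\log^{+}(1/|a_n|)$ contribution must be absorbed into the stated constants rather than producing an extra copy of $m(r,1/a_n)$.
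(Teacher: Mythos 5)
Your argument follows essentially the same route as the paper's: inequality (\ref{equ4.1}) by a pointwise $\log^+$ estimate (the paper uses a Horner-type iteration $P_j(z,f)=a_nf^{n-j}+\cdots+a_j$, but the resulting bound is identical), and inequality (\ref{equ4.2}) by splitting the circle into the arc where $|f|$ exceeds a threshold built from the ratios $|a_j|/|a_n|$ and its complement, which is exactly the paper's decomposition (its threshold being $1+\frac{2}{|a_n|}\sum_{j<n}|a_j|$). One correction to your closing remark, though: the copy of $\log^{+}(1/|a_n|)$ arising on $E$ cannot be ``absorbed into the stated constants''; the correct bookkeeping is that, since $E$ and its complement are disjoint and $n-m\ge 1$, one has $\frac{1}{2\pi}\int_{E}\log^{+}\frac{1}{|a_n|}\,{\rm d}\theta+(n-m)\frac{1}{2\pi}\int_{E^{c}}\log^{+}\frac{1}{|a_n|}\,{\rm d}\theta\le (n-m)\,m\!\left(r,\frac{1}{a_n}\right)$, and likewise for the $\sum_j m(r,a_j)$ terms --- this is precisely how the paper obtains the coefficient $n-m$, and your proof should invoke this disjointness rather than trying to hide the $E$-contribution in the additive constant.
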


\begin{proof}\ Let us begin by proving (\ref{equ4.1}).
Set $P_j(z,f)=a_nf^{n-j}(z)+...+a_{j+1}f(z)+a_j, 0\leq j\leq n.$ For
a given $r>0$, we have
\begin{eqnarray*}\ m(r,P(z,f))&\leq &m(r,fP_1(z,f))+m(r,a_0)+\log
2\\
&\leq &m(r,P_1(z,f))+m(r,f)+m(r,a_0)+\log 2\\
&\leq &m(r,P_2(z,f))+2m(r,f)+m(r,a_1)+m(r,a_0)+2\log2\\
&\leq &nm(r,f)+\sum_{j=0}^nm(r,a_j)+n\log 2.
\end{eqnarray*}

To prove (\ref{equ4.2}), set $$E=\left\{\theta:\
|f(re^{i\theta})|>1+\frac{2}{|a_n(re^{i\theta})|}\sum_{j=0}^{n-1}|a_j(re^{i\theta})|\right\}$$
and $$F=\left\{\theta:\ 1\leq |f(re^{i\theta})|\leq
1+\frac{2}{|a_n(re^{i\theta})|}\sum_{j=0}^{n-1}|a_j(re^{i\theta})|\right\}.$$
Here we require that for $\theta\in E\cup F$, $z=re^{i\theta}$ is not a pole of $f$ and neither a pole of
$a_j(0\leq j\leq n)$, nor a zero of $a_n$. Then the
calculation below is reasonable. For $\theta\in E$, it is easily seen that
\begin{eqnarray*}
|P(re^{i\theta},f)|&\geq
&|a_n(re^{i\theta})||f(re^{i\theta})|^n-|f(re^{i\theta})|^{n-1}\sum_{j=0}^{n-1}|a_j(re^{i\theta})|\\
&\geq
&|f(re^{i\theta})|^n\left(|a_n(re^{i\theta})|-\frac{1}{|f(re^{i\theta})|}\sum_{j=0}^{n-1}|a_j(re^{i\theta})|\right)\\
&\geq &\frac{1}{2}|a_n(re^{i\theta})||f(re^{i\theta})|^n,
\end{eqnarray*}
so that for $\theta\in E$,
$$|f(re^{i\theta})|^{n-m}\leq
2\cdot\frac{1}{|a_n(re^{i\theta})|}|R(re^{i\theta},f)|\sum_{i=0}^m|b_i(re^{i\theta})|.$$
From the definition of $m(r,f)$ it follows that
\begin{eqnarray*}\ &\ & (n-m)m(r,f)=(n-m)\frac{1}{2\pi}\int_{E\cup F}\log|f(re^{i\theta})|{\rm
d}\theta\\
&\leq &\frac{1}{2\pi}\int_{E}\log|R(re^{i\theta},f)|{\rm
d}\theta+\frac{1}{2\pi}\int_{E}\log\left|\frac{1}{a_n(re^{i\theta})}\right|{\rm
d}\theta+\log 2 \\
 &\ &+\frac{1}{2\pi}\int_{E}\log
\sum_{i=0}^m|b_i(re^{i\theta})|{\rm d}\theta
+(n-m)\frac{1}{2\pi}\int_{F}\log\left(1+\frac{2}{|a_n(re^{i\theta})|}\sum_{j=0}^{n-1}|a_j(re^{i\theta})|\right){\rm
d}\theta\\
&\leq
&m(r,R(z,f))+(n-m)m\left(r,\frac{1}{a_n}\right)+(n-m)\sum_{j=0}^{n-1}m\left(r,a_j\right)+\sum_{i=0}^{m}m\left(r,b_i\right)\\
&\ & +(2n-2m+1)\log 2+(n-m)\log n+\log (m+1)
\end{eqnarray*}
outside a discrete set of $r>0$ where $r=|z|$ is such that either $f$ or  $a_j(0\leq j\leq n)$ has a pole, or $a_n$ has a zero, on a circle of radius $r$, centred at the origin. But since $m(r,f)$ is continuous, it follows that (\ref{equ4.2}) holds for all $r>0$.  
\end{proof}

In general, it is basically impossible to estimate $m(r,R(z,f))$ in
terms of $m(r,f)$, if $R(z,w)$ is not a polynomial in $w$.

\begin{thm}\label{thm4.1}\ Let $f$ be a meromorphic function and
\begin{equation*}
\begin{split}
P(z,\vec{f})&=\sum_{\lambda\in I}a_\lambda(z) f(z)^{\lambda_0}f(z+c_{1})^{\lambda_1}...
f(z+c_{n})^{\lambda_n},\\
Q(z,\vec{f})&=\sum_{\lambda\in J}b_\lambda(z) f(z)^{\lambda_0}f(z+c_{1})^{\lambda_1}...
f(z+c_{n})^{\lambda_n},
\end{split}
\end{equation*}
where $I\cup
J=\{(\lambda_0,\lambda_1,...,\lambda_n):\
\lambda_j\in\mathbb{N}\cup\{0\}\}$ is a finite multi-index set and
all of $c_{j} (1\leq j\leq n)$ are non-zero complex constants, and
the $a_\lambda(z)$ and $b_\lambda(z)$ are meromorphic coefficients.
Then
\begin{eqnarray}\label{equ4.3}m(r,P(z,\vec{f}))&\leq &{\rm deg}_{\vec{w}}(P)m(r,f)+\bigg(\sum_{\lambda\in
I}|\lambda|\bigg)\sum_{j=1}^nm\left(r,\frac{f(z+c_{j})}{f(z)}\right)\nonumber\\
&\ &+\sum_{\lambda\in I}m(r,a_\lambda)+{\rm deg}_{\vec{w}}(P)\log
2|I|,\end{eqnarray}and if, in addition, $P(z,\vec{f})$ has just one
term of maximal total degree, and $Q(z,\vec{f})$ and $P(z,\vec{f})$
do not have common factors, letting
$R(z,\vec{f})=\frac{P(z,\vec{f})}{Q(z,\vec{f})},$ then
\begin{eqnarray}\label{equ4.4}
&\ &({\rm deg}_{\vec{w}}(P)-{\rm deg}_{\vec{w}}(Q))m(r,f)\\ &\leq&
m(r,R(z,\vec{f}))+{\rm deg}_{\vec{w}}(P)\sum_{j=1}^n\hat{\lambda}_j
m\left(r,\frac{f(z)}{f(z+c_{j})}\right)\nonumber\\
&\ &+\bigg({\rm deg}_{\vec{w}}(P)\sum_{|\lambda|<m,\lambda\in
I}|\lambda|+\sum_{\lambda\in J}|\lambda|\bigg)\sum_{j=1}^n
m\left(r,\frac{f(z+c_j)}{f(z)}\right)\nonumber\\
&\ &+{\rm
deg}_{\vec{w}}(P)m\left(r,\frac{1}{a_{\hat{\lambda}}}\right)+{\rm
deg}_{\vec{w}}(P)\sum_{|\lambda|<m}m(r,a_\lambda) +\sum_{\lambda\in
J}m(r,b_\lambda)+C,
\end{eqnarray}
where $m={\rm deg}_{\vec{w}}(P)$,
$\hat{\lambda}=(\hat{\lambda_0},\hat{\lambda_1},...,\hat{\lambda_n})\in
I$ with $|\hat{\lambda}|=m$ and $C$ is a constant only depending on
${\rm deg}_{\vec{w}}(P),{\rm deg}_{\vec{w}}(Q)$ and $|I|, |J|$ which
denote the numbers of elements of $I$ and $J$ in turn.
\end{thm}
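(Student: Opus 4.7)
The core idea is to reduce the shift-difference polynomial setting to the ordinary polynomial setting of Lemma~\ref{lem4.1}. The key algebraic identity is to factor each monomial as
$$a_\lambda(z)\,f(z)^{\lambda_0}\prod_{j=1}^n f(z+c_j)^{\lambda_j}=a_\lambda(z)\,f(z)^{|\lambda|}\prod_{j=1}^n\left(\frac{f(z+c_j)}{f(z)}\right)^{\lambda_j},$$
where $|\lambda|=\lambda_0+\lambda_1+\cdots+\lambda_n$. Collecting by total degree, this presents $P(z,\vec f)$ as an ordinary polynomial in $f(z)$ of degree $m=\mathrm{deg}_{\vec w}(P)$,
$$P(z,\vec f)=\sum_{k=0}^{m}A_k(z)\,f(z)^k,\qquad A_k(z)=\sum_{\substack{\lambda\in I\\|\lambda|=k}}a_\lambda(z)\prod_{j=1}^n\left(\frac{f_{c_j}}{f}\right)^{\lambda_j}.$$
Similarly $Q(z,\vec f)=\sum_{k=0}^{\tilde m}B_k(z)f(z)^k$ with $\tilde m=\mathrm{deg}_{\vec w}(Q)$.

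\textbf{Step for \eqref{equ4.3}.} Apply \eqref{equ4.1} of Lemma~\ref{lem4.1} to the polynomial $\sum_k A_k f^k$ to obtain
$$m(r,P(z,\vec f))\leq m\cdot m(r,f)+\sum_{k=0}^{m}m(r,A_k)+m\log 2.$$
Then estimate each $m(r,A_k)$ by the triangle inequality for the proximity function: for every $\lambda$ with $|\lambda|=k$,
$$m\bigl(r,a_\lambda\textstyle\prod_j(f_{c_j}/f)^{\lambda_j}\bigr)\leq m(r,a_\lambda)+\sum_{j=1}^n\lambda_j\,m\left(r,\frac{f_{c_j}}{f}\right),$$
and use $\lambda_j\leq|\lambda|$ to replace $\sum_\lambda\sum_j\lambda_j m(r,f_{c_j}/f)$ by $\bigl(\sum_\lambda|\lambda|\bigr)\sum_j m(r,f_{c_j}/f)$. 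Summing over $k$ and absorbing the counting-of-terms factor into the constant yields \eqref{equ4.3}.

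\textbf{Step for \eqref{equ4.4}.} The hypothesis that $P(z,\vec f)$ has a \emph{unique} term of maximal total degree $m$, say indexed by $\hat\lambda$, forces the top coefficient of $P$ (as a polynomial in $f$) to be the single product
$$A_m(z)=a_{\hat\lambda}(z)\prod_{j=1}^n\left(\frac{f_{c_j}}{f}\right)^{\hat\lambda_j}.$$
I now apply \eqref{equ4.2} of Lemma~\ref{lem4.1} to $R(z,\vec f)=P(z,\vec f)/Q(z,\vec f)=(\sum A_k f^k)/(\sum B_k f^k)$ (the proof of \eqref{equ4.2} only uses the explicit size estimate on $|P|$ for $|f|$ large and does not in fact require the absence of common factors of the two polynomials in $f$). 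This gives
$$(m-\tilde m)m(r,f)\leq m(r,R(z,\vec f))+m\cdot m(r,1/A_m)+m\sum_{k<m}m(r,A_k)+\sum_{k=0}^{\tilde m}m(r,B_k)+C_0,$$
with $C_0$ a constant depending only on $m,\tilde m$. Crucially, since $A_m$ is a single monomial in the shift ratios,
$$m(r,1/A_m)\leq m(r,1/a_{\hat\lambda})+\sum_{j=1}^n\hat\lambda_j\,m\!\left(r,\frac{f}{f_{c_j}}\right),$$
which produces the $\sum_j\hat\lambda_j m(r,f/f_{c_j})$ term in \eqref{equ4.4}. The estimates on $m(r,A_k)$ (for $k<m$) and $m(r,B_k)$ are done exactly as in the proof of \eqref{equ4.3}, and the $\lambda_j\leq|\lambda|$ inequality again collapses the double sums into $(m\sum_{|\lambda|<m,\lambda\in I}|\lambda|+\sum_{\lambda\in J}|\lambda|)\sum_j m(r,f_{c_j}/f)$.

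\textbf{Main obstacle.} The real calculation is essentially mechanical once the factorisation $f(z+c_j)=f(z)\cdot(f_{c_j}/f)$ is in place; the only genuinely delicate point is handling $m(r,1/A_m)$. If there were two or more monomials of maximal total degree, $A_m$ would be a sum and $m(r,1/A_m)$ could not be split into a logarithmic-difference term plus coefficient data, and \eqref{equ4.4} would fail in this form. The bookkeeping of the constant $C$ (tracking $\log 2$, $\log|I|$, $\log|J|$, $\log m$, $\log(\tilde m+1)$ and the $(2m-2\tilde m+1)\log 2$ term from Lemma~\ref{lem4.1}) is tedious but straightforward, and all such contributions depend only on $m,\tilde m,|I|,|J|$, as required.
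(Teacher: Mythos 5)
Your proposal is correct and follows essentially the same route as the paper: the same factorisation $f(z+c_j)^{\lambda_j}=f(z)^{\lambda_j}(f_{c_j}/f)^{\lambda_j}$ turning $P$ and $Q$ into polynomials in $f(z)$ with coefficients built from the shift ratios, the same application of Lemma~\ref{lem4.1}, and the same observation that uniqueness of the top-degree term makes the leading coefficient a single monomial so that $m(r,1/A_m)$ splits. The only cosmetic difference is that you apply \eqref{equ4.2} directly to $P/Q$, whereas the paper first derives the $Q\equiv 1$ case and then uses $m(r,P)\leq m(r,R)+m(r,Q)$ together with \eqref{equ4.3} applied to $Q$; both lead to the same estimate.
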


\begin{proof}\ Set $m={\rm deg}_{\vec{w}}(P)$ and
$$c_\lambda(z)=\prod_{j=1}^n\left(\frac{f(z+c_j)}{f(z)}\right)^{\lambda_j},\
\lambda=(\lambda_0,\lambda_1,...,\lambda_n)\in I\cup J.$$ Then we
can rewrite $P(z,\vec{f})$ into
$$P(z,\vec{f})=\sum_{\lambda\in I}a_\lambda c_\lambda f(z)^{|\lambda|}
=\bigg(\sum_{|\lambda|=m}a_\lambda c_\lambda\bigg)
f(z)^{m}+\sum_{|\lambda|<m}a_\lambda c_\lambda f(z)^{|\lambda|},$$
where $|\lambda|=\sum_{j=0}^n\lambda_j$. In terms of Lemma
\ref{lem4.1}, we have
\begin{eqnarray*}m(r,P(z,\vec{f}))&\leq &
mm(r,f)+\sum_{k=0}^mm(r,\sum_{|\lambda|=k}
a_\lambda c_\lambda)+{\rm deg}_{\vec{w}}(P)\log 2\\
&\leq& mm(r,f)+\sum_{k=0}^m\sum_{|\lambda|=k}m(r,
a_\lambda c_\lambda)+\sum_{k=0}^m\log\sum_{|\lambda|=k}1+{\rm deg}_{\vec{w}}(P)\log 2\\
&\leq & mm(r,f)+\sum_{\lambda\in I}m(r,c_\lambda)+\sum_{\lambda\in
I}m(r,a_\lambda)+{\rm deg}_{\vec{w}}(P)\log 2|I|\\
&\leq &mm(r,f)+\sum_{\lambda\in
I}\sum_{j=1}^n\lambda_jm\left(r,\frac{f(z+c_{j})}{f(z)}\right)\\
&\ &+\sum_{\lambda\in I}m(r,a_\lambda)+{\rm deg}_{\vec{w}}(P)\log 2|I|\\
&\leq &mm(r,f)+(\sum_{\lambda\in
I}|\lambda|)\sum_{j=1}^nm\left(r,\frac{f(z+c_{j})}{f(z)}\right)\\
&\ &+\sum_{\lambda\in I}m(r,a_\lambda)+{\rm deg}_{\vec{w}}(P)\log
2|I|.\end{eqnarray*} This is (\ref{equ4.3}). A similar argument
together with (\ref{equ4.2}) yields \begin{eqnarray}\label{equ4.5+}
mm(r,f)&\leq&
m(r,P(z,\vec{f}))+mm\bigg(r,\frac{1}{\sum\limits_{|\lambda|=m}a_\lambda
c_\lambda}\bigg)+m\sum_{|\lambda|<m}m(r,a_\lambda
c_\lambda)+O(1)\nonumber\\
&\leq
&m(r,P(z,\vec{f}))+mm\left(r,\frac{1}{a_{\hat{\lambda}}}\right)+m\sum_{j=1}^n\hat{\lambda}_j
m\left(r,\frac{f(z)}{f(z+c_j)}\right)\nonumber\\
&\
&+m\sum_{|\lambda|<m}|\lambda|\sum_{|\lambda|<m}m\left(r,\frac{f(z+c_j)}{f(z)}\right)+m\sum_{|\lambda|<m}m(r,a_\lambda)+O(1),
\end{eqnarray}
where $\lambda\in I$ and
$\hat{\lambda}=(\hat{\lambda}_0,...,\hat{\lambda}_n)\in I$ with
$|\hat{\lambda}|=m$. Since
$$m(r,P(z,\vec{f})\leq m(r,R(z,\vec{f}))+m(r,Q(z,\vec{f})),$$
applying (\ref{equ4.3}) to $m(r,Q(z,\vec{f})$ and in view of
(\ref{equ4.5+}), we deduce (\ref{equ4.4}).
\end{proof}

Clunie's Theorem in \cite{Clunie} on an estimate of Nevanlinna proximity
of a differential polynomial produced from a certain differential
polynomial equation has proven valuable in the study of value
distribution of meromorphic solutions of non-linear differential
equations. It is natural to study the difference analogues of Clunie
Theorem and their applications in difference equations.  We consider the
difference equation
\begin{equation}\label{equ4.5}U(z,\vec{w})P(z,\vec{w})=Q(z,\vec{w}),\end{equation} where
$U(z,\vec{w}),P(z,\vec{w})$ and $Q(z,\vec{w})$ are three difference
polynomials with meromorphic coefficients. Assume that
\eqref{equ4.5} has an admissible meromorphic solution, i.e., the
coefficients are small functions of it. We want to know the number
of poles of the solution and hence we consider estimate of the
proximity function of the solution from the equation (\ref{equ4.5})
with the help of the analogue of the Clunie Theorem. The case when
$U(z,\vec{w})=w^n$ and ${\rm deg}_{\vec{w}}(Q)\leq n$ was
investigated in \cite{RodRisto}. Moreover, in \cite{LaineYang}, the
case when $U(z,\vec{w})$ contains just one term of maximal total
degree in $w$ and its shifts with ${\rm deg}_{\vec{w}}(Q)\leq {\rm
deg}_{\vec{w}}(U)$ was considered, and in \cite{Risto} the case when
$P(z,\vec{w})$ is homogeneous and $U(z,\vec{w})=U(z,w)$ and
$Q(z,\vec{w})=Q(z,w)$ are polynomial in $w$, which covers all
difference equations (\ref{equ1.1++}) -- (\ref{equ1.4++}) listed in the
introduction. In fact, the difference equations in \cite{Risto}
contain many known Painlev\'e difference equations.

Now we make a more careful discussion and establish a new difference
Clunie type theorem. For a meromorphic function $f$, by $S(r,f)$ we
denote a quantity such that $S(r,f)=o(T(r,f))$ as $r\to\infty$
outside of a set with finite logarithmic measure. For a polynomial
$P(z,\vec{x})$ in $\vec{x}=(x_0,x_1,...,x_n)$, by ${\rm ord}_0(P)$
we denote the multiplicity of zero of $P$ as a function of $x_0$ at
$x_0=0$. If ${\rm ord}_0(P)=0$, then $P(z,\vec{x})$ contains at
least one term without $x_0$. For $U, P$ and $Q$ in (\ref{equ4.5}),
we introduce the notations:
\begin{eqnarray*}d_{\vec{w}}&=&\max\{{\deg}_{\vec{w}}(Q),{\deg}_{\vec{w}}(P)+{\deg}_{\vec{w}}(U)\}-\min\{{\deg}_{\vec{w}}(P),{\rm
ord}_0(Q)\}\\
D_{\vec{w}}&=& d_{\vec{w}}-{\deg}_{\vec{w}}(P)\\
&=&\max\{{\deg}_{\vec{w}}(Q)-{\deg}_{\vec{w}}(P),
{\deg}_{\vec{w}}(U)\}-\min\{{\deg}_{\vec{w}}(P),{\rm
ord}_0(Q)\}\\
\tau_{\vec{w}}&=&d_{\vec{w}}-\hat{\kappa}(P).
\end{eqnarray*}

In the following theorem we consider equation \eqref{equ4.5} in the case where $U(z,\vec{w})$ and $Q(z,\vec{w})$ are specialized as polynomials in $w$ with meromorphic coefficients.

\begin{thm}\label{thm4.3}\ Let $P(z,\vec{w})$ be a homogeneous difference polynomial in $\vec{w}$ with
${\rm ord}_0(P)=0$ and $\hat{\lambda}_0<{\rm deg}_{\vec{w}}(P)$ and
let $U(z,w)$ and $Q(z,w)$ be both polynomials in $w$ without any
common factors. Assume that the difference equation (\ref{equ4.5})
has a transcendental meromorphic solution $w$ such that, for some $\varepsilon >0$, 
$$\limsup_{r\to\infty}\frac{(\log
r)^{1+\varepsilon}\log T(r,w)}{r}=0.$$ Then
\begin{equation}\label{equ4.6+} \hat{\kappa}(P)\geq
\max\{{\rm deg}_w(Q)-\hat{\lambda}_0,{\rm
deg}_{w}(U)-\min\{\hat{\lambda}_0,{\rm ord}_0(Q)\}\}.
\end{equation} Furthermore,
$\hat{\kappa}(P)\geq D_{\vec{w}}$. If $D_{\vec{w}}>0$, then we have
\begin{equation}\label{equ4.7+}
\frac{D_{\vec{w}}}{\hat{\kappa}(P)}T(r,w)\leq N(r,w)+S(r,w);
\end{equation}
if $\tau_{\vec{w}}>0$, then we have
\begin{equation}\label{equ4.7++}
\frac{\tau_{\vec{w}}}{{\rm deg}_{\vec{w}}(P)}T(r,w)\leq
N\left(r,\frac{1}{w}\right)+S(r,w);
\end{equation}
if $\hat{\kappa}(P)= D_{\vec{w}}$, that is, ${\rm
ord}_0(Q)\leq\hat{\lambda}_0$ and
\begin{equation}\label{equ4.8++}\hat{\kappa}(P)={\deg}_w(U)-{\rm ord}_0(Q)\geq{\rm
deg}_w(Q)-\hat{\lambda}_0,\end{equation} then
\begin{equation}\label{equ4.8+}
N(r,w)=T(r,w)+S(r,w),\
N\left(r,\frac{1}{w}\right)=T(r,w)+S(r,w).\end{equation}
\end{thm}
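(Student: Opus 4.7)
The plan is to derive a single master inequality
\[d_{\vec{w}}\,T(r,w) \;\leq\; d_P\,N(r,1/w) + \hat{\kappa}(P)\,N(r,w) + S(r,w),\]
with $d_P := {\rm deg}_{\vec{w}}(P)$, from which every conclusion of the theorem will follow by elementary bookkeeping. The growth hypothesis coincides with that of Theorem \ref{thm1.4}, so $m(r, w(z+c)/w(z)) = S(r,w)$ for every fixed $c$ outside a set of finite logarithmic measure, and Theorem \ref{thm2.1}(3) yields the shift invariance $N(r, w(z+c_j)) = N(r,w) + S(r,w)$. Throughout the argument the coefficients of $U$, $P$, $Q$ are taken to be small functions of $w$, as is standard in this section.

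To establish the master inequality, I use homogeneity of $P$ to factor $P(z,\vec{w}) = w(z)^{d_P}\tilde{P}(z)$ with
\[\tilde{P}(z) := \sum_{\lambda\in I} a_\lambda(z) \prod_{j=1}^n \left(\frac{w(z+c_j)}{w(z)}\right)^{\lambda_j}.\]
The difference logarithmic derivative estimate combined with $T(r,a_\lambda) = S(r,w)$ gives $m(r,\tilde{P}) = S(r,w)$. Using the equation $UP = Q$, I identify $\tilde{P}$ with the rational function $R_0(z,w) := Q(z,w)/(U(z,w)w^{d_P})$ evaluated at $w(z)$. Since $U$ and $Q$ are coprime in $w$, the only common factor of $Q$ and $Uw^{d_P}$ is $w^{\min(q_0,d_P)}$ with $q_0 := {\rm ord}_0(Q)$; after cancellation $R_0$ in lowest terms has degree $d_{\vec{w}}$ in $w$. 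The Valiron--Mohon'ko theorem gives $T(r,\tilde{P}) = d_{\vec{w}} T(r,w) + S(r,w)$, hence $N(r,\tilde{P}) = d_{\vec{w}} T(r,w) + S(r,w)$. Bounding $N(r,\tilde{P})$ from above by direct pole counting, a zero of $w$ of multiplicity $\mu$ contributes a pole of order at most $\mu d_P$ (attained by the terms with $\lambda_0 = 0$, which exist because ${\rm ord}_0(P) = 0$), and a pole of $w(z+c_j)$ of multiplicity $\nu$ contributes at most $\nu\hat{\lambda}_j$; summing, and using the shift invariance of $N(r,w(z+c_j))$, gives $N(r,\tilde{P}) \leq d_P N(r,1/w) + \hat{\kappa}(P) N(r,w) + S(r,w)$, completing the derivation.

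Each conclusion of the theorem now follows by substituting suitable upper bounds on $N$ into the master inequality. Bounding both $N(r,1/w)$ and $N(r,w)$ above by $T(r,w) + O(1)$ yields $\hat{\kappa}(P) \geq D_{\vec{w}}$; keeping $N(r,w)$ on the right and using $N(r,1/w) \leq T(r,w)$ gives (\ref{equ4.7+}) when $D_{\vec{w}} > 0$; symmetrically one obtains (\ref{equ4.7++}) when $\tau_{\vec{w}} > 0$. When $\hat{\kappa}(P) = D_{\vec{w}}$, observing $\tau_{\vec{w}} = d_P$, both estimates saturate to $T(r,w) \leq N(r,w) + S(r,w)$ and $T(r,w) \leq N(r,1/w) + S(r,w)$, yielding (\ref{equ4.8+}). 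For the lower bound (\ref{equ4.6+}), I apply Theorem \ref{thm1.1} to the rearrangement $P(z,\vec{w}) = Q(z,w)/U(z,w)$, whose hypotheses hold by virtue of Theorem \ref{thm1.2} and ${\rm ord}_0(P) = 0$: this produces $\max\{{\rm deg}_w(Q),{\rm deg}_w(U)\} \leq \hat{\kappa}(P) + \hat{\lambda}_0$, covering the terms ${\rm deg}_w(Q) - \hat{\lambda}_0$ and ${\rm deg}_w(U) - \hat{\lambda}_0$. The residual bound $\hat{\kappa}(P) \geq {\rm deg}_w(U) - q_0$ required when $q_0 < \hat{\lambda}_0$ follows from $\hat{\kappa}(P) \geq D_{\vec{w}}$, since in that case $\min(q_0,d_P) = q_0$ gives $D_{\vec{w}} \geq {\rm deg}_w(U) - q_0$. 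The principal technical obstacle lies in the Valiron--Mohon'ko step: one must verify that, after extracting the common factor $w^{\min(q_0,d_P)}$, the remaining numerator and denominator of $R_0$ are genuinely coprime, which relies on coprimality of $U$ and $Q$ in $w$ together with the observation that $U$ cannot contain a factor of $w$ whenever $q_0 \geq 1$; exceptional $z$-values where leading or trailing coefficients vanish are zeros of small functions and absorbed into $S(r,w)$.
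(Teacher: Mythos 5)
Your proposal is correct and follows essentially the same route as the paper: the same factorization $P=w^{\deg_{\vec w}(P)}\tilde P$ with $m(r,\tilde P)=S(r,w)$ via the difference logarithmic derivative lemma, the same Valiron--Mohon'ko identification $N(r,\tilde P)=d_{\vec w}T(r,w)+S(r,w)$, the same pole-counting upper bound $\hat\kappa(P)N(r,w)+\deg_{\vec w}(P)N(r,1/w)+S(r,w)$ (the paper's $\kappa(P)$ equals your $d_P$ because ${\rm ord}_0(P)=0$ and $P$ is homogeneous), and the same combination with Theorem~\ref{thm1.1} for (\ref{equ4.6+}). The only piece you omit is the purely combinatorial verification, carried out at the end of the paper's proof, that the condition $\hat\kappa(P)=D_{\vec w}$ is actually \emph{equivalent} to ${\rm ord}_0(Q)\leq\hat\lambda_0$ together with (\ref{equ4.8++}) (this uses $\hat\lambda_0<\deg_{\vec w}(P)$ to rule out one branch of the max/min identities); since the theorem asserts this equivalence via its ``that is'' clause, that bookkeeping should be added, but it involves no new analytic input.
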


\begin{proof}\ In terms of Theorem \ref{thm4.1} and Theorem \ref{thm1.4}, we have
\begin{equation}\label{equ4.6++++}m\left(r,\frac{P(z,\vec{w})}{w^{{\rm
deg}_{\vec{w}}(P)}}\right)=S(r,w).\end{equation} It follows from
(\ref{equ4.5}) that, in terms of Valiron's Theorem, we have
\begin{eqnarray}\label{equ4.6}
T\left(r,\frac{P(z,\vec{w})}{w^{{\rm
deg}_{\vec{w}}(P)}}\right)&=&T\left(r,\frac{Q(z,w)}{w^{{\rm
deg}_{\vec{w}}(P)}U(z,w)}\right)\nonumber\\
&=&d_{\vec{w}}T(r,w)+S(r,w).
\end{eqnarray}
Combining (\ref{equ4.6++++}) and (\ref{equ4.6}) yields
\begin{equation}\label{equ4.7}N\left(r,\frac{P(z,\vec{w})}{w^{{\rm
deg}_{\vec{w}}(P)}}\right)=d_{\vec{w}}T(r,w)+S(r,w).\end{equation}
On the other hand, by noting that $\kappa(P)={\rm deg}_{\vec{w}}(P)$
for the homogeneous difference polynomial $P(z,\vec{w})$ with ${\rm
ord}_0(P)=0$, we have
\begin{eqnarray}\label{equ4.8}N\left(r,\frac{P(z,\vec{w})}{w^{{\rm
deg}_{\vec{w}}(P)}}\right)&\leq
&N_0\left(r,P(z,\vec{w})\right)+N\left(r,\frac{1}{w^{\kappa(P)}}\right)\nonumber\\
&\leq &\hat{\kappa}(P)N(r+h,w)+S(r,w)+\kappa(P)N\left(r,\frac{1}{w}\right)\nonumber\\
&\leq
&\hat{\kappa}(P)N(r,w)+\kappa(P)N\left(r,\frac{1}{w}\right)+S(r,w),\end{eqnarray}
where $N_0$ means that the poles of only $w$ are not counted and
$h=\max\{|c_j|:1\leq j\leq n\}$. Therefore, we have
\begin{eqnarray}
&&\kappa(P)m\left(r,\frac{1}{w}\right)+(d_{\vec{w}}-\kappa(P))T(r,w) \nonumber \\
&& \quad \leq \hat{\kappa}(P)N(r,w)+S(r,w) \label{equ4.10}
\end{eqnarray}
and
\begin{eqnarray}
&& \hat{\kappa}(P)m\left(r,w\right)+(d_{\vec{w}}-\hat{\kappa}(P))T(r,w) \nonumber \\
&& \quad  \leq \kappa(P)N\left(r,\frac{1}{w}\right)+S(r,w).\label{equ4.10+}
\end{eqnarray}
So we obtain \begin{eqnarray*}\hat{\kappa}(P)&\geq& d_{\vec{w}}-\kappa(P)\\
&=&\max\{{\rm deg}_w(Q)-{\rm deg}_{\vec{w}}(P),{\rm deg}_w(U)\}\\
&\ &-\min\{{\deg}_{\vec{w}}(P),{\rm ord}_0(Q)\},\end{eqnarray*} by
noting that ${\rm deg}_{\vec{w}}(P)=\kappa(P)$. In view of Theorem
\ref{thm1.1}, we have
$$\hat{\kappa}(P)+\hat{\lambda}_0\geq \max\{{\rm
deg}_w(Q),{\rm deg}_w(U)\}.$$ Now,
$$\hat{\kappa}(P)\geq {\rm deg}_w(U)-\min\{{\deg}_{\vec{w}}(P),{\rm
ord}_0(Q)\}$$ and
$$\hat{\kappa}(P)+\hat{\lambda}_0\geq {\rm deg}_w(U),$$
so that \begin{eqnarray*}\hat{\kappa}(P)&\geq& {\rm
deg}_w(U)-\min\{\hat{\lambda}_0,\min\{{\deg}_{\vec{w}}(P),{\rm
ord}_0(Q)\}\}\\
&=&{\rm deg}_w(U)-\min\{\hat{\lambda}_0,{\deg}_{\vec{w}}(P),{\rm
ord}_0(Q)\}\\
&=&{\rm deg}_w(U)-\min\{\hat{\lambda}_0,{\rm
ord}_0(Q)\},\end{eqnarray*} by noting that
$\hat{\lambda}_0\leq{\deg}_{\vec{w}}(P)$. Also
$$\hat{\kappa}(P)\geq{\rm
deg}_w(Q)-\hat{\lambda}_0.$$ These imply (\ref{equ4.6+}).

Consider the case of $d_{\vec{w}}>{\rm deg}_{\vec{w}}(P)$, that is
to say, ${\rm deg}_w(Q)>{\rm deg}_{\vec{w}}(P)+\min\{{\rm
deg}_{\vec{w}}(P),{\rm ord}_0(Q)\}$ or ${\rm deg}_w(U)>\min\{{\rm
deg}_{\vec{w}}(P),{\rm ord}_0(Q)\}$. Then (\ref{equ4.7+}) follows
from (\ref{equ4.10}) and (\ref{equ4.7++}) from
(\ref{equ4.10+}).

Assume that $\hat{\kappa}(P)=D_{\vec{w}}$. In view of
(\ref{equ4.10}), it can be seen that
$${\rm
deg}_{\vec{w}}(P)m\left(r,\frac{1}{w}\right)+\hat{\kappa}(P)m(r,w)=S(r,w).$$
This immediately yields (\ref{equ4.8+}).

Let us make a careful discussion of the condition
$\hat{\kappa}(P)=D_{\vec{w}}$. In terms of (\ref{equ4.6+}), we have
\begin{eqnarray*}\hat{\kappa}(P)&=&\max\{{\rm deg}_w(Q)-\hat{\lambda}_0,{\rm
deg}_w(U)-\min\{\hat{\lambda}_0,{\rm ord}_0(Q)\}\}\\
&=&\max\{{\rm deg}_w(Q)-\hat{\lambda}_0+\min\{\hat{\lambda}_0,{\rm
ord}_0(Q)\},{\rm
deg}_w(U)\}-\min\{\hat{\lambda}_0,{\rm ord}_0(Q)\}\\
&=&\max\{{\deg}_w(Q)-{\deg}_{\vec{w}}(P),{\deg}_w(U)\}-\min\{{\deg}_{\vec{w}}(P),{\rm
ord}_0(Q)\}\end{eqnarray*} so that
\begin{equation}\label{equ4.14}\min\{\hat{\lambda}_0,{\rm
ord}_0(Q)\}\}=\min\{{\deg}_{\vec{w}}(P),{\rm ord}_0(Q)\}
\end{equation}
and $$\max\{{\rm deg}_w(Q)-\hat{\lambda}_0
+\min\{\hat{\lambda}_0,{\rm ord}_0(Q)\},{\rm deg}_w(U)\}$$
\begin{equation}\label{equ4.15}=\max\{{\deg}_w(Q)-{\deg}_{\vec{w}}(P),{\deg}_w(U)\}.
\end{equation}
Since $\hat{\lambda}_0<{\deg}_{\vec{w}}(P)$, from (\ref{equ4.14}) it
follows that ${\rm ord}_0(Q)\leq\hat{\lambda}_0.$ Thus
(\ref{equ4.15}) reduces to $$\max\{{\rm deg}_w(Q)-\hat{\lambda}_0
+{\rm ord}_0(Q),{\rm deg}_w(U)\}$$
\begin{equation}\label{equ4.16}=\max\{{\deg}_w(Q)-{\deg}_{\vec{w}}(P),{\deg}_w(U)\}.
\end{equation}

Suppose that
$${\deg}_w(U)\leq{\deg}_w(Q)-{\deg}_{\vec{w}}(P).$$ Then by noting that
${\rm deg}_w(Q)-\hat{\lambda}_0+{\rm ord}_0(Q)\geq
{\deg}_w(Q)-{\deg}_{\vec{w}}(P)\geq {\deg}_w(U)$, (\ref{equ4.16})
yields
 ${\rm deg}_w(Q)-\hat{\lambda}_0+{\rm
ord}_0(Q)={\deg}_w(Q)-{\deg}_{\vec{w}}(P)$ and so ${\rm ord}_0(Q)=0$
and $\hat{\lambda}_0={\deg}_{\vec{w}}(P)$, and a contradiction is
derived. Therefore, ${\deg}_w(Q)-{\deg}_{\vec{w}}(P)<{\deg}_w(U)$.
It can be seen, by (\ref{equ4.16}), that ${\deg}_w(U)\geq{\rm
deg}_w(Q)-\hat{\lambda}_0+{\rm ord}_0(Q)$.

In conclusion, $\hat{\kappa}(P)=D_{\vec{w}}$ if and only if ${\rm
ord}_0(Q)\leq\hat{\lambda}_0$ and $$\hat{\kappa}(P)={\deg}_w(U)-{\rm
ord}_0(Q)\geq{\rm deg}_w(Q)-\hat{\lambda}_0.$$ These are
(\ref{equ4.8++}).
\end{proof}

We make a remark. If the condition of the growth for $w$ is
(\ref{equ2.2+}), then the results in Theorem \ref{thm4.3} also holds
for $S(r,w)=o(T(r,w))$ outside a subset $E$ of $[1,+\infty)$ with
$\underline{\rm dens}E=0$.

Theorem \ref{thm4.3} is an improvement of the results in
\cite{Risto}, where the conditions for the existence of a
meromorphic solution of the difference equation was not considered.
The main results in \cite{Risto} should be with $\hat{\kappa}(P)$
instead of $\kappa(P)$.

 As an application of Theorem \ref{thm4.3}, we
investigate the difference equation
\begin{equation}\label{equ4.18}
P(z,w,\overline{w},\underline{w})=\overline{w}\underline{w}+\overline{w}w+w\underline{w}=\frac{Q(z,w)}{U(z,w)},
\end{equation}
where $Q(z,w)$ and $U(z,w)$ are both polynomials in $w$ and without
any common factors. Then $\hat{\kappa}(P)=2,\ \hat{\lambda}_0=1,\
{\rm deg}_{\vec{w}}(P)=2$ so that $D_{\vec{w}}=\tau_{\vec{w}}$ and
$$D_{\vec{w}}=
\max\{{\rm deg}_w(Q)-2,{\rm deg}_w(U)\}-\min\{2,{\rm
ord}_0(Q)\}\leq\hat{\kappa}(P)=2.$$

In view of Theorem \ref{thm4.3}, for $D_{\vec{w}}=\tau_{\vec{w}}=1$
we have
$$\frac{1}{2}T(r,w)\leq N(r,w)+S(r,w)\ \text{and}\ \frac{1}{2}T(r,w)\leq N\left(r,\frac{1}{w}\right)+S(r,w);$$
for $D_{\vec{w}}=2$, we have
$$T(r,w)\leq N(r,w)+S(r,w)\ \text{and}\ T(r,w)\leq N\left(r,\frac{1}{w}\right)+S(r,w).$$

In view of (\ref{equ4.6+}) in Theorem \ref{thm4.3}, we have
$$\max\{{\rm deg}_w(Q)-1,{\rm deg}_w(U)-\min\{1,{\rm
ord}_0(Q)\}\}\leq\hat{\kappa}(P)=2.$$ Equivalently
$${\rm deg}_w(Q)\leq 3,\ {\rm deg}_w(U)\leq 2+\min\{1,{\rm
ord}_0(Q)\}.$$ Since ${\rm deg}_w(Q)\leq 3,$ it follows that we only
need to observe
$${\rm
deg}_w(U)-2\leq \min\{1,{\rm ord}_0(Q)\}.$$

Let us divide into three cases to discuss.

\

\noindent{\bf (I)} Assume that ${\rm ord}_0(Q)=0$. Then ${\rm deg}_w(U)\leq
2$ and
$$D_{\vec{w}}=
\max\{{\rm deg}_w(Q)-2,{\rm deg}_w(U)\}$$ so that $0\leq
D_{\vec{w}}\leq 2.$ When $D_{\vec{w}}=0,$ we have ${\rm deg}_w(U)=0$
and ${\rm deg}_w(Q)\leq 2$. When $D_{\vec{w}}=1$, we have ${\rm
deg}_w(U)=1, {\rm deg}_w(Q)\leq 3$ or ${\rm deg}_w(U)=0, {\rm
deg}_w(Q)=3$. When $D_{\vec{w}}=2\ (=\hat{\kappa}(P))$, we have
${\rm deg}_w(U)=2$ and ${\rm deg}_w(Q)\leq 3$. The corresponding
difference equations have the following forms:
\begin{eqnarray}\ \ \ \overline{w}\underline{w}+\overline{w}w+w\underline{w}&=&a_2w^2+a_1w+a_0,
\ a_0\not=0, D_{\vec{w}}=0;\\
\ \ \
\overline{w}\underline{w}+\overline{w}w+w\underline{w}&=&a_3w^3+a_2w^2+a_1w+a_0,
\ a_0\not=0, a_3\not=0, D_{\vec{w}}=1;\\
\ \ \
\overline{w}\underline{w}+\overline{w}w+w\underline{w}&=&\frac{a_3w^3+a_2w^2+a_1w+a_0}{w+b_0},
\ a_0\not=0, D_{\vec{w}}=1;\\
\ \ \
\overline{w}\underline{w}+\overline{w}w+w\underline{w}&=&\frac{a_3w^3+a_2w^2+a_1w+a_0}{w^2+b_1w+b_0},
\ a_0\not=0, D_{\vec{w}}=2.\end{eqnarray}

\

\noindent{\bf (II)} Assume that ${\rm ord}_0(Q)=1$. Then ${\rm deg}_w(U)\leq
3$ and
$$D_{\vec{w}}= \max\{{\rm deg}_w(Q)-2,{\rm deg}_w(U)\}-1$$ so that $-1\leq D_{\vec{w}}\leq 2.$
When $D_{\vec{w}}=0$, we have ${\rm deg}_w(U)=1, {\rm deg}_w(Q)\leq
3$ or ${\rm deg}_w(U)=0, {\rm deg}_w(Q)=3$. When $D_{\vec{w}}=1$, we
have ${\rm deg}_w(U)=2$ and ${\rm deg}_w(Q)\leq 3$. When
$D_{\vec{w}}=2 (=\hat{\kappa}(P))$, we have ${\rm deg}_w(U)=3$ and
${\rm deg}_w(Q)\leq 3$. When $D_{\vec{w}}=-1$, we have ${\rm
deg}_w(U)=0$ and $1\leq {\rm deg}_w(Q)\leq 2$. The corresponding
difference equations have the following forms:
\begin{eqnarray}\ \ \ \overline{w}\underline{w}+\overline{w}w+w\underline{w}&=&\frac{w(a_2w^2+a_1w+a_0)}{w+b_0},
\ a_0\not=0, b_0\not=0, D_{\vec{w}}=0;\\
\ \ \
\overline{w}\underline{w}+\overline{w}w+w\underline{w}&=&w(a_2w^2+a_1w+a_0),
\ a_0\not=0, a_2\not=0, D_{\vec{w}}=0;\\
\ \ \
\overline{w}\underline{w}+\overline{w}w+w\underline{w}&=&\frac{w(a_2w^2+a_1w+a_0)}{w^2+b_1w+b_0},
\ a_0\not=0, b_0\not=0, D_{\vec{w}}=1;\\
\ \ \
\overline{w}\underline{w}+\overline{w}w+w\underline{w}&=&\frac{w(a_2w^2+a_1w+a_0)}{w^3+b_2w^2+b_1w+b_0},
\ a_0\not=0, b_0\not=0, D_{\vec{w}}=2;\\
\ \ \
\overline{w}\underline{w}+\overline{w}w+w\underline{w}&=&w(a_1w+a_0),\
a_0\not=0, D_{\vec{w}}=-1.\end{eqnarray}

\

\noindent{\bf (III)} Assume that $2\leq {\rm ord}_0(Q)\leq 3$ so that $2\leq
{\rm deg}_w(Q)\leq 3$. Then ${\rm deg}_w(U)\leq 3$ and
$$D_{\vec{w}}= \max\{{\rm deg}_w(Q)-2,{\rm deg}_w(U)\}-2$$ so that $-2\leq D_{\vec{w}}\leq 1.$
When $D_{\vec{w}}=0,$ we have ${\rm deg}_w(U)=2$ and $2\leq {\rm
deg}_w(Q)\leq 3$. When $D_{\vec{w}}=1$, we have ${\rm deg}_w(U)=3$
and $2\leq {\rm deg}_w(Q)\leq 3$. When $D_{\vec{w}}=-1$, we have
${\rm deg}_w(U)=1,\ 2\leq {\rm deg}_w(Q)\leq 3$ or ${\rm
deg}_w(U)=0,\ {\rm deg}_w(Q)=3$. When $D_{\vec{w}}=-2$, we have
${\rm deg}_w(U)=0$ and ${\rm deg}_w(Q)=2.$ The corresponding
difference equations have the following forms:
\begin{eqnarray}\overline{w}\underline{w}+\overline{w}w+w\underline{w}&=&\frac{w^2(a_1w+a_0)}{w^2+b_1w+b_0},
\ b_0\not=0, D_{\vec{w}}=0;\\
\ \ \
\overline{w}\underline{w}+\overline{w}w+w\underline{w}&=&\frac{w^2(a_1w+a_0)}{w^3+b_2w^2+b_1w+b_0},
\ b_0\not=0, D_{\vec{w}}=1;\\
\ \ \
\overline{w}\underline{w}+\overline{w}w+w\underline{w}&=&w^2(a_1w+a_0),
\ a_1\not=0, D_{\vec{w}}=-1;\\
\ \ \
\overline{w}\underline{w}+\overline{w}w+w\underline{w}&=&\frac{w^2(a_1w+a_0)}{w+b_0},
\ b_0\not=0, D_{\vec{w}}=-1;\\
\ \ \
\overline{w}\underline{w}+\overline{w}w+w\underline{w}&=&a_2w^2, \
a_2\not=0, D_{\vec{w}}=-2.\end{eqnarray}

Some of the 14 equations listed above can be ruled out according to
the growth of the existing meromorphic solution.

\begin{thm}\label{thm4.3b}\ Let $w$ be a non-rational meromorphic solution of the
difference equation
\begin{equation}\overline{w}\underline{w}+\overline{w}w+w\underline{w}=a_3w^3+a_2w^2+a_1w+a_0,
\ a_3\not=0.\end{equation} Then for a $D>1$ and $K>0$, we have
$$T(r,w)\geq KD^r,\ r>1.$$
\end{thm}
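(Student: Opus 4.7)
The plan is to argue by contradiction, assuming $T(r,w)$ is subexponential, say $\limsup_{r\to\infty}(\log r)^{1+\varepsilon}\log T(r,w)/r=0$ for some $\varepsilon>0$, so that the logarithmic-difference machinery of the paper applies with $S(r,w)=o(T(r,w))$ outside a set of finite logarithmic measure. First I would show that $w$ has poles via Theorem~\ref{thm4.3}. Set $P(z,\vec w)=\overline{w}\underline{w}+\overline{w}w+w\underline{w}$; one computes $\deg_{\vec w}(P)=2=\hat\kappa(P)$, $\hat\lambda_0=1$, and $\operatorname{ord}_0(P)=0$. Taking $U\equiv 1$ and $Q(z,w)=a_3w^3+a_2w^2+a_1w+a_0$ with $a_0\neq 0$, one finds $d_{\vec w}=3$ and $D_{\vec w}=1$, so Theorem~\ref{thm4.3} yields $\tfrac{1}{2}T(r,w)\le N(r,w)+S(r,w)$. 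Since $w$ is non-rational, this forces $w$ to have a pole, say at $z_0$ of order $p_0\ge 1$.

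The central step is a pole-order propagation along the arithmetic progression $z_0+\mathbb{Z}$. Let $p_j$ denote the pole order of $w$ at $z_0+j$ (with $p_j\le 0$ if $z_0+j$ is regular or a zero). At any pole $z_0+j$, the pole orders of $\overline{w}\underline{w}$, $\overline{w}w$, and $w\underline{w}$ at $z_0+j$ are bounded above by $p_{j-1}+p_{j+1}$, $p_{j-1}+p_j$, and $p_j+p_{j+1}$ respectively, while the right-hand side has pole order exactly $3p_j$ since $a_3\neq 0$. Therefore
\[
\max\bigl(p_{j-1}+p_{j+1},\; p_{j-1}+p_j,\; p_j+p_{j+1}\bigr)\;\ge\;3p_j
\]
at every pole. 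Set $M_n=\max_{|j|\le n} p_j$ and let $j^\ast\in[-n,n]$ attain this maximum. If $|j^\ast|<n$, both neighbours satisfy $p_{j^\ast\pm 1}\le M_n$, so each of the three sums above is at most $2M_n<3M_n$, contradicting the displayed inequality at $z_0+j^\ast$. Hence $|j^\ast|=n$; in the case $j^\ast=n$, the constraint $p_{n-1}\le M_n$ forces $p_{n+1}\ge 2M_n$, so $M_{n+1}\ge 2M_n$. By induction, $M_n\ge 2^n p_0$ for every $n\ge 0$.

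Consequently, for each $n\ge 0$ the point $z_0+j^\ast$ realises a pole of order at least $2^n p_0$ within $B(0,|z_0|+n)$, so $n(r,w)\ge 2^{r-|z_0|-1}p_0$ for all sufficiently large $r$. Integrating,
\[
N(r,w)\;\ge\;\int_{|z_0|+1}^{r}\frac{2^{t-|z_0|-1}p_0}{t}\,dt\;\ge\;K\,\frac{2^r}{r}\;\ge\;K'D^r
\]
for any $1<D<2$ and sufficiently large $r$, whence $T(r,w)\ge K'D^r$; absorbing the bounded range $r>1$ into a smaller constant yields the claim. The main obstacle I anticipate is the pole-order inequality at poles where several leading terms of the left-hand side coincide in Laurent order; but cancellation can only lower the actual value of $v(\text{LHS})$ below $\max(\cdots)$, which, together with the rigid $v(\text{RHS})=3p_j$, still forces $\max(\cdots)\ge 3p_j$ with no generic-position hypothesis required. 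A secondary technical point is reconciling the negation of ``$T(r,w)\ge KD^r$ for some $D,K$'' with the growth hypothesis needed to invoke Theorem~\ref{thm4.3}, but any genuine subexponential $w$ comfortably satisfies that hypothesis, so the contradiction closes and the theorem follows.
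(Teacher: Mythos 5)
Your pole-propagation argument for the case of infinitely many poles is essentially the paper's own argument: both start from a pole $z_0$, compare the pole order $3p_j$ of the right-hand side with the orders $p_{j-1}+p_{j+1}$, $p_{j-1}+p_j$, $p_j+p_{j+1}$ of the three terms on the left, and push an exponentially growing pole order along $z_0+\mathbb{Z}$ into $n(r,w)$. (Your window-maximum bookkeeping, giving $M_{n+1}\ge 2M_n$, is if anything tidier than the paper's one-directional factor $3/2$.) The genuine gap is at the seeding step. You need at least one pole of $w$ to start the chain, and your route to it --- Theorem~\ref{thm4.3} with $U\equiv 1$ and $D_{\vec{w}}=1>0$ --- silently assumes $a_0\neq 0$, i.e.\ ${\rm ord}_0(Q)=0$. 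The theorem only assumes $a_3\neq 0$. When ${\rm ord}_0(Q)\ge 1$ one gets $D_{\vec{w}}\le 0$ and (\ref{equ4.7+}) gives nothing; equations (4.29) and (4.35) are exactly such instances, and they are among the equations the paper disposes of \emph{using} Theorem~\ref{thm4.3b}. Worse, the paper notes (via Zhang's Lemma 2.5) that for ${\rm deg}_w(Q)=3$ every admissible solution satisfies $m(r,w)=T(r,w)+S(r,w)$, so a hypothetical slowly growing solution could a priori be entire or have only finitely many poles. The paper therefore runs a separate case: if $w$ has finitely many poles, it compares maximum moduli, $M(r,\overline{w}\underline{w}+\overline{w}w+w\underline{w})\le 3M^2(r+1,w)$ against $M(r,a_3w^3+\cdots+a_0)\ge \tfrac12|a_3(z_0)|M^3(r,w)$, to obtain $\tfrac32\log M(r,w)\le \log M(r+1,w)+S(r,w)$ and hence exponential growth of $\log M$, so of $T$. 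Your proof needs this case (or a substitute for it) to close.

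A secondary issue: your local computation at $z_0+j$ takes the right-hand side to have a pole of order exactly $3p_j$, which requires $a_3(z_0+j)\neq 0$ and the $a_i$ to be finite there. The paper conditions on $a_i(z_0+n)\neq 0,\infty$ explicitly and carries the correction terms $n(r+n,a_j)+n(r+n,1/a_j)$ into the final counting inequality; you should either choose the seed pole so that the lattice $z_0+\mathbb{Z}$ avoids the zeros and poles of the coefficients, or account for the finitely many bad lattice points that could otherwise break the doubling chain.
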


\begin{proof}\ Suppose that $w$ has finitely many poles. For a
sufficiently large $r$, we have
\begin{eqnarray*}
M(r,\overline{w}\underline{w}+\overline{w}w+w\underline{w})&\leq&
M(r,\overline{w}\underline{w})+M(r,\overline{w}w)+M(r,w\underline{w})\\
&\leq& 3M^2(r+1,w),
\end{eqnarray*}
where $M(r,*)$ denotes the maximal modulus of function $*$ on
$|z|=r$. Also
\begin{eqnarray*}
 && M(r,a_3w^3+a_2w^2+a_1w+a_0)\\ && \quad\geq M^3(r,
w)\left(|a_3(z_0)|-\frac{1}{M(r,w)}(|a_2(z_0)|+|a_1(z_0)|+|a_0(z_0)|\right),
\end{eqnarray*}
where $z_0$ with $|z_0|=r$ is such that $M(r,w)=|w(z_0)|.$ So
$$\frac{3}{2}\log M(r,w)\leq \log M(r+1,w)+S(r,w).$$

Now suppose that $w$ has infinitely many poles. When $z_0$ is a pole
of $w$ with order $k$, we write $w(z_0)=\infty^{k_0}$. If
$a_j(z_0)\not=0,\infty (0\leq j\leq 3)$, then
$$(\overline{w}\underline{w}+\overline{w}w+w\underline{w})(z_0)=\infty^{3k_0}.$$
We obtain $\overline{w}(z_0)=\infty^{k_1}$ or
$\underline{w}(z_0)=\infty^{k_1}$ with $k_1\geq \frac{3}{2}k_0$.
Without loss of generality, we assume $w(z_0+1)=\infty^{k_1}$. If
$a_j(z_0+1)\not=0,\infty (0\leq j\leq 3)$, then
$$(\overline{w}\underline{w}+\overline{w}w+w\underline{w})(z_0+1)=\infty^{3k_1}.$$
It can now be seen that $w(z_0+2)=\infty^{k_2}$ with $k_2\geq
\frac{3}{2}k_1$. Proceeding inductively, we have
$w(z_0+n)=\infty^{k_n}$ with $k_n\geq
\left(\frac{3}{2}\right)^nk_0$. Thus, we get
$$\left(\frac{3}{2}\right)^nn(r,w)\leq n(r+n,w)+\sum_{j=0}^3\left(n(r+n,a_j)+n\left(r+n,\frac{1}{a_j}\right)\right).$$
so that we can deduce the inequality for $N(r,w)$, and therefore
$N(r,w)>K\left(\frac{3}{2}\right)^r$ for some positive constant $K$.
\end{proof}

In terms of Theorem \ref{thm4.3b}, (4.25), (4.29) and (4.35) are
ruled out of the above list if we assume the existence of admissible
meromorphic solutions with the growth at most hyper-order $1$ and
minimal hyper-type.

Equations (4.31) and (4.34) can be rewritten into the following form
$$(\overline{w}+w)(\underline{w}+w)=\frac{G(z,w)}{U(z,w)},$$
where ${\rm deg}_w(G)=5,\ {\rm deg}_w(U)=3$. This derives a
contradiction, see the paragraph after (1.4). Therefore, ${\rm
deg}_w(U)\leq 2$ in the all equations listed above. Thus equations
(4.31) and (4.34) are ruled out. It has been proven that the high
density of poles of meromorphic solutions is a key in singling out
the Painlev\'e type difference equations with the form
(\ref{equ4.18}). However, we emphasize that when $D_{\vec{w}}\leq 0$
from (\ref{equ4.7+}), we cannot always avoid the possibility that
the solution $w$ has few poles. Lemma 2.5 in \cite{Zhang1} shows
that if ${\rm deg}_w(Q)=3$ in (\ref{equ4.18}), then we have
\begin{equation}\label{equ4.37+}m(r,w)=T(r,w)+S(r,w).\end{equation} Therefore, a meromorphic
solution $w$ of (4.26) or (4.27) with $a_3\not\equiv 0$, (4.28),
(4.30) or (4.31) with $a_2\not\equiv 0$, (4.33), (4.43) or (4.36)
with $a_1\not\equiv 0$ satisfies (\ref{equ4.37+}). However, from
(\ref{equ4.7+}), if $D_{\vec{w}}>0$, then (\ref{equ4.37+}) does not
hold.

An observation to (4.27) yields that for ${\rm deg}_w(Q)<3$ and
${\rm deg}_w(U)=2$ in (\ref{equ4.18}), (\ref{equ4.37+}) holds
possibly.

Therefore, we have the following

\begin{thm} If the difference equation (\ref{equ4.18}) have a meromorphic
solution with the growth of (\ref{equ2.2+}), then (\ref{equ4.18})
reduces to one of the following equations:

\begin{eqnarray*}\ \ \ \overline{w}\underline{w}+\overline{w}w+w\underline{w}&=&a_2w^2+a_1w+a_0,
\ a_0\not=0, D_{\vec{w}}=0;\nonumber\\
\ \ \
\overline{w}\underline{w}+\overline{w}w+w\underline{w}&=&\frac{a_2w^2+a_1w+a_0}{w+b_0},
\ a_0\not=0, D_{\vec{w}}=1;\nonumber\\
\ \ \
\overline{w}\underline{w}+\overline{w}w+w\underline{w}&=&\frac{a_2w^2+a_1w+a_0}{w^2+b_1w+b_0},
\ a_0\not=0, D_{\vec{w}}=2;\nonumber\\
\ \ \
\overline{w}\underline{w}+\overline{w}w+w\underline{w}&=&\frac{w(a_2w^2+a_1w+a_0)}{w+b_0},
\ a_0\not=0, b_0\not=0, D_{\vec{w}}=0;\nonumber\\
\ \ \
\overline{w}\underline{w}+\overline{w}w+w\underline{w}&=&\frac{w(a_1w+a_0)}{w^2+b_1w+b_0},
\ a_0\not=0, b_0\not=0, D_{\vec{w}}=1;\nonumber\\
\ \ \
\overline{w}\underline{w}+\overline{w}w+w\underline{w}&=&w(a_1w+a_0),\
a_0\not=0, D_{\vec{w}}=-1;\nonumber\\
\ \ \
\overline{w}\underline{w}+\overline{w}w+w\underline{w}&=&\frac{w^2(a_1w+a_0)}{w^2+b_1w+b_0},
\ b_0\not=0, D_{\vec{w}}=0;\nonumber\\
\ \ \
\overline{w}\underline{w}+\overline{w}w+w\underline{w}&=&\frac{w^2(a_1w+a_0)}{w+b_0},
\ b_0\not=0, D_{\vec{w}}=-1;\nonumber\\
\ \ \
\overline{w}\underline{w}+\overline{w}w+w\underline{w}&=&a_2w^2, \
a_2\not=0, D_{\vec{w}}=-2.\end{eqnarray*}
\end{thm}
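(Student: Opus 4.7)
The plan is to combine the classification already carried out in the preceding discussion with a final pole-density obstruction. Write the equation as
\[
P(z,w,\overline{w},\underline{w})=\overline{w}\underline{w}+\overline{w}w+w\underline{w}=\frac{Q(z,w)}{U(z,w)},
\]
so $P$ is homogeneous with $\operatorname{ord}_0(P)=0$, $\hat{\lambda}_0=1<2=\deg_{\vec{w}}(P)$, and $\hat{\kappa}(P)=2$. First, Theorem~\ref{thm4.3} applies (condition (\ref{equ2.2+}) for $T(r,w)$ suffices, as remarked after the proof of Theorem~\ref{thm4.3}) and yields the degree restriction (\ref{equ4.6+}), i.e.\
\[
\deg_w(Q)\le 3,\qquad \deg_w(U)\le 2+\min\{1,\operatorname{ord}_0(Q)\}.
\]
Splitting into the three cases $\operatorname{ord}_0(Q)=0$, $=1$, and $\ge2$ produces exactly the fourteen candidate equations listed in Cases (I), (II), (III) above, together with their invariants $D_{\vec{w}}$.

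Second, the equations whose right-hand side is a polynomial of degree $3$ in $w$ with non-zero leading coefficient fall under Theorem~\ref{thm4.3b}, which forces $T(r,w)\ge KD^r$ for some $D>1$; this contradicts the hypothesis (\ref{equ2.2+}). This eliminates the three polynomial degree-$3$ candidates (the equations with right-hand side $a_3w^3+a_2w^2+a_1w+a_0$, $w(a_2w^2+a_1w+a_0)$ with $a_2\not\equiv 0$, and $w^2(a_1w+a_0)$ with $a_1\not\equiv 0$).

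Third, the two candidates with $\deg_w(U)=3$, namely the ones with denominator $w^3+b_2w^2+b_1w+b_0$, can be rewritten as
\[
(\overline{w}+w)(\underline{w}+w)=w^2+\frac{Q(z,w)}{U(z,w)}=\frac{G(z,w)}{U(z,w)},
\]
with $\deg_w(G)=\deg_w(U)+2=5$ and $\deg_w(U)=3$, where $G$ and $U$ are coprime. Since Theorem~\ref{thm1.1} applied to $(\overline{w}+w)(\underline{w}+w)=R(z,w)$ (together with our strengthened shift-invariance result, Theorem~\ref{thm2.1}, valid under (\ref{equ2.2+})) gives $\deg_w(R)\le 4$, we get a contradiction, as observed in the paragraph after (\ref{equ1.4++}). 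This removes the two remaining $\deg_w(U)=3$ candidates.

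Fourth, and this is the step I expect to be the main obstacle, the candidates with $D_{\vec{w}}>0$ and $\deg_w(Q)=3$ still have to be ruled out. For these, inequality (\ref{equ4.7+}) gives
\[
\frac{D_{\vec{w}}}{\hat{\kappa}(P)}T(r,w)\le N(r,w)+S(r,w),
\]
so $N(r,w)$ is a definite positive fraction of $T(r,w)$ modulo $S(r,w)$, equivalently $m(r,w)\le(1-D_{\vec{w}}/2)T(r,w)+S(r,w)$. On the other hand, Lemma~2.5 of \cite{Zhang1} asserts that $\deg_w(Q)=3$ in (\ref{equ4.18}) forces
\[
m(r,w)=T(r,w)+S(r,w),
\]
and the two conclusions are incompatible once $D_{\vec{w}}>0$. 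Hence in every remaining candidate with $D_{\vec{w}}>0$ the top-degree coefficient of $Q$ must vanish; i.e.\ $a_3\equiv 0$ in the two surviving Case~(I) candidates with denominator $w+b_0$ or $w^2+b_1w+b_0$, and $a_2\equiv 0$ in the Case~(II) candidate with numerator $w(a_2w^2+a_1w+a_0)$ and denominator of degree two. All $D_{\vec{w}}\le 0$ candidates survive unchanged. Collecting the surviving equations yields exactly the nine forms displayed in the statement, completing the proof.
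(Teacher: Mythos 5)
Your proposal is correct and follows essentially the same route as the paper: Theorem~\ref{thm4.3} gives the degree bounds and the fourteen candidates, Theorem~\ref{thm4.3b} removes the three cubic-polynomial cases, the rewriting as $(\overline{w}+w)(\underline{w}+w)=G/U$ with $\deg_w(G)=5$ removes the two cases with $\deg_w(U)=3$, and Lemma~2.5 of \cite{Zhang1} combined with (\ref{equ4.7+}) forces the leading coefficient of $Q$ to vanish whenever $D_{\vec{w}}>0$ and $\deg_w(Q)=3$. This is exactly the paper's elimination scheme and yields the same nine surviving equations.
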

These above equations have been investigated in Zhang \cite{Zhang},
\cite{Zhang1} and Wen \cite{Wen} to determine the conditions which
their coefficients should satisfy.

\

{\bf Acknowledgements.} We would like to express our gratitude to the
referee for his valuable and careful comments in pointing out
mistakes and improving this paper.

\end{document}